\numberwithin{equation}{section}
\newtheorem{theorem}{Theorem}[section]
\newtheorem{lemma}[theorem]{Lemma}
\newtheorem{proposition}[theorem]{Proposition}
\newtheorem{corollary}[theorem]{Corollary}
\theoremstyle{remark}
\newtheorem{remark}[theorem]{Remark}
\DeclareMathOperator{\Jac}{Jac}
\DeclareMathOperator{\Hom}{Hom}
\DeclareMathOperator{\Blow}{Blow}
\renewcommand{\det}{\mathrm{det}}
\newcommand{\st}{\mathrm{st}}
\newcommand{\ch}{\mathrm{ch}}
\newcommand{\Todd}{\mathrm{Todd}}
\newcommand{\Pic}{\mathrm{Pic}}
\newcommand{\Hilb}{\mathrm{Hilb}}
\newcommand{\supp}{\mathrm{supp}}
\newcommand{\End}{\mathrm{End}}
\newcommand{\Ext}{\mathrm{Ext}}
\newcommand{\id}{\mathbf{1}}
\newcommand{\rk}{\mathrm{rk}}
\newcommand{\GL}{\mathrm{GL}}
\newcommand{\SL}{\mathrm{SL}}
\newcommand{\Sp}{\mathrm{Sp}}
\newcommand{\Tot}{\mathrm{Tot}}
\newcommand{\tr}{\mathrm{tr} \,}
\newcommand{\sps}{\mathrm{str.ps}}
\newcommand{\Kum}{\mathrm{Kum}}
\newcommand{\smooth}{\mathrm{sm}}
\newcommand{\ten}{\mathrm{ten}}
\newcommand{\six}{\mathrm{six}}
\renewcommand{\a}{\mathfrak{a}}
\newcommand{\Ee}{\mathcal{E}}
\newcommand{\Ff}{\mathcal{F}}
\newcommand{\Ii}{\mathcal{I}}
\newcommand{\Ll}{\mathcal{L}}
\newcommand{\Mm}{\mathcal{M}}
\newcommand{\Nn}{\mathcal{N}}
\newcommand{\Oo}{\mathcal{O}}
\newcommand{\Pp}{\mathcal{P}}
\newcommand{\Tt}{\mathcal{T}}
\newcommand{\Vv}{\mathcal{V}}
\newcommand{\Xx}{\mathcal{X}}
\newcommand{\f}{\mathrm{f}}
\newcommand{\g}{\mathrm{g}}
\renewcommand{\H}{\mathrm{H}}
\newcommand{\M}{\mathrm{M}}
\newcommand{\N}{\mathrm{N}}
\renewcommand{\P}{\mathrm{P}}
\renewcommand{\v}{\mathrm{v}}
\newcommand{\aA}{\mathbf{A}}
\newcommand{\bB}{\mathbf{B}}
\newcommand{\cC}{\mathbf{C}}
\newcommand{\hH}{\mathbf{H}}
\newcommand{\mM}{\mathbf{M}}
\newcommand{\nN}{\mathbf{N}}
\newcommand{\sS}{\mathbf{S}}
\newcommand{\vV}{\mathbf{v}}
\newcommand{\xX}{\mathbf{X}}
\newcommand{\ol}[1]{\overline{#1}}
\newcommand{\wt}[1]{\widetilde{#1}}
\renewcommand{\AA}{\mathbb{A}}
\newcommand{\CC}{\mathbb{C}}
\newcommand{\ZZ}{\mathbb{Z}}
\newcommand{\LL}{\mathbb{L}}
\newcommand{\KK}{\mathbb{K}}
\newcommand{\PP}{\mathbb{P}}
\newcommand\Quotient[2]{
        \mathchoice
            {% \displaystyle
                \text{\raise1ex\hbox{\thinspace $#1$}\Big{/} \lower1ex\hbox{$#2$} \thinspace}%
            }
            {% \textstyle
                #1\,/\,#2
            }
            {% \scriptstyle
                #1\,/\,#2
            }
            {% \scriptscriptstyle  
                #1\,/\,#2
            }
    }
\newcommand\GIT[2]{
        \mathchoice
            {% \displaystyle
                \text{\raise1ex\hbox{\thinspace $#1$}\Big{/}\!\!\!\!\Big{/} \lower1ex\hbox{$#2$} \thinspace}%
            }
            {% \textstyle
                #1\,/\,#2
            }
            {% \scriptstyle
                #1\,/\,#2
            }
            {% \scriptscriptstyle  
                #1\,/\,#2
     a       }
    }
\newcommand{\map}[5]{\begin{array}{ccc}   #1  & \stackrel{#5}{\longrightarrow} &  #2  \\  #3 & \longmapsto & #4  \end{array}}
\newcommand{\morph}[6]{\begin{array}{cccc} #6: & #1  & \stackrel{#5}{\longrightarrow} &  #2  \\ & #3 & \longmapsto & #4  \end{array}}
\title[O'Grady spaces and symplectic resolution of Higgs moduli spaces]{\bf O'Grady spaces and symplectic resolution of moduli spaces of Higgs bundles}
\author[E. Franco]{Emilio Franco}
\address{Emilio Franco,
\newline\indent Escuela Superior T\'ecnica de Ingenieros Industriales, 
\newline\indent Universidad Polit\'ecnica de Madrid, 
\newline\indent C. de Jos\'e Guti\'errez Abascal, 2, 28006, Madrid, Espa\~na}
\email{emilio.franco@upm.es}
\thanks{The author has been supported by the Scientific Employment Stimulus program, funded by FCT (Portugal), by an FCT Investigator grant with fellowship reference CEECIND/04153/2017. He was previously supported by the project PTDC/MAT- GEO/2823/2014 also funded by FCT (Portugal) with national funds.}
\date{\today}
\begin{document}

\maketitle

\begin{abstract}
We describe here a degeneration of the symplectic desingularization of the moduli spaces of topologically trivial $\GL(2,\CC)$ and $\SL(2,\CC)$-Higgs bundles over a hyperelliptic curve, into O'Grady's ten and six dimensional exceptional examples of irreducible holomorphic symplectic manifolds. Most of this note is a survey of work on these degenerations by Donagi--Ein--Lazarsfeld, de Cataldo--Maulik--Shen and Felissetti--Mauri, although in certain cases we provide details that were missing the previous articles. 
\end{abstract}

\tableofcontents

\begin{flushright}
{\it Dedicated to Peter Newstead \\ on the occasion of his 80th birthday.}
\end{flushright}

\section{Introduction}

Peter's contribution to the theory of moduli spaces is invaluable. In this note we want to honor him by describing a beautiful relation between two important classes of moduli spaces.

\subsection{Background}

A compact irreducible holomorphic symplectic manifold is a smooth compact simply connected K\"ahler manifold with a non-degenerate holomorphic 2-form which is unique up to scaling. Thanks to S.-T. Yau's proof of Calabi’s conjecture \cite{yau1, yau2}, a compact irreducible holomorphic symplectic manifold is equivalent to a {\it compact hyperk\"ahler manifold}. These manifolds play a central role in mathematical physics as they are one of three building blocks \cite{beauville_IHSM} for the construction of Ricci-flat manifolds, along with abelian varieties and Calabi--Yau manifolds. The list of known examples of compact irreducible holomorphic symplectic manifolds is surprisingly short \cite{beauville_3} when compared with the other types of building blocks: only two series of examples in arbitrary dimension are known (Hilbert schemes of points on a K3 and generalized Kummer varieties as shown by Beauville in \cite{beauville_IHSM}), along with two exceptional examples in dimension ten and six obtaineded by O'Grady \cite{OGrady_1, OGrady_2}. It is remarkable that all these examples are associated to moduli spaces of sheaves over symplectic surfaces, as the later are equipped with Mukai's holomorphic symplectic form \cite{mukai1}. In particular, O'
Grady's exceptional examples appear after a symplectic resolution of singular moduli spaces of sheaves on symplectic surfaces. The existence of symplectic resolution of these singular moduli spaces has been studied in detail \cite{lehn&sorger, kaledin&lehn&sorger, kiem}, concluding that only the class of cases considered by O'Grady admit one, hence closing the door to the appearance of a new compact irreducible holomorphic symplectic manifold out of this method. 

When we study moduli space of pure dimension $1$ sheaves on a symplectic surface, it is possible to consider a morphism to a projective space using the fitting support. Beauville \cite{beauville_fibr} showed that this provides an integrable system, known as the {\it Beauville--Mukai system}. In fact, a surpring result of Matsushita \cite{matsushita} implies that the only fibrations one can consider on compact holomorphic symplectic manifolds are integrable systems.

\

Higgs bundles were introduced by Hitchin \cite{hitchin-self} and its moduli space has become an important object of study thanks to its rich geometry. Non-abelian Hodge theory \cite{hitchin-self, simpson1, simpson2, donaldson, corlette} shows that the moduli spaces of Higgs bundles and (projectively) flat connections are homeomorphic, what equips the underlying manifold with a hyperK\"ahler structure. Therefore, these moduli spaces constitute a great source of examples of {\it non-compact hyperK\"ahler varieties}, hence holomorphic symplectic. These moduli spaces are smooth when the rank and the degree are chosen to be coprime and singular in the non-coprime case, as showed by Simpson \cite{simpson2}. It is then natural to ask whether these spaces admit a symplectic desingularization, a question that was first addressed by Kiem and Yoo \cite{kiem&yoo} via the study of the stringy $E$-functions. Using the work of Bellamy and Schedler \cite{bellamy&schedler} on symplectic resolutions of character varieties, Tirelli \cite{tirelli} showed moduli spaces of Higgs bundles admit a symplectic resolution only when the rank and the genus of the base curve are both equal to $2$. In this case, the desingularization was first provided by Yoo \cite{yoo} applying the same techniques used by O'Grady \cite{OGrady_1, OGrady_2}, although after \cite{bellamy&schedler, tirelli} such symplectic resolution can be obtained by simply blowing-up the singular locus of the moduli space. 

Another important feature of the moduli space of Higgs bundles is that it carries a morphism \cite{hitchin_duke} to a vector space, turning it into an integrable system named the {\it Hitchin system}. Thanks to the spectral correspondence \cite{hitchin_duke, BNR, simpson2}, the moduli space of Higgs bundles can be described as an open subset of the moduli space of pure dimension $1$ sheaves on a certain ruled surface and one can interpret the Hitchin system in terms of the fibration obtained by considering the (fitting) support of these pure dimension $1$ sheaves.

\

As we have seen, both the Beauville--Mukai and the Hitchin systems can be obtained by considering the fitting support on moduli spaces of pure dimension $1$ sheaves on surfaces, the first one being compact and second one non-compact. These similarities become even more explicit with the construction of the Donagi--Ein--Lazarsfeld \cite{donagi&ein&lazarsfeld} degeneration of the Beauville--Mukai integrable system into the Hitchin integrable system by considering the normal cone degeneration of a curve inside a K3 surface. This has been generalized in several directions, de Cataldo, Maulik and Shen \cite{deCataldo&maulik&shen_2} constructed a similar degeneration working on abelian surfaces in their proof of the P=W conjecture for $\SL_n$, Sawon and Shen \cite{sawon&shen} worked over a certain del Pezzo surface in order to construct a degeneration into the $\Sp(2m,\CC)$-Hitchin system, and the author studied in \cite{franco} the appearance of these degeneration over any smooth surface, obtaining a degeneration into the moduli space of $L$-twisted Higgs bundles. Sawon \cite{sawon} wrote a beautiful survey on the subject.

\subsection{Summary}

In this note we provide a review of the previously mentioned degenerations and all the objects involved in these constructions: moduli spaces of sheaves in smooth surfaces, compact irreducible holomorphic symplectic manifolds, Higgs bundles, and the ruled surface associated to the canonical bundle of a curve. 

We focus in a particular degeneration where the exceptional examples of O'Grady in dimensions ten and six are are taken to be our compact (irreducible) holomorphic symplectic varieties, and where we consider symplectic resolutions of the moduli space of $\GL(2,\CC)$ and $\SL(2,\CC)$-Higgs bundles over a curve of genus $2$ as our non-compact holomorphic symplectic varieties. Felissetti and Mauri \cite{felissetti&mauri}, discussed the existence of such degenerations by considering, in the ten dimensional space, the degeneration provided by Donagi--Ein--Lazarsfeld \cite{donagi&ein&lazarsfeld}, while relying on the work of de Cataldo--Maulik--Shen \cite{deCataldo&maulik&shen_2} for the six dimensional example. Our main contribution consists on providing details that were missing in the construction of the degeneration of \cite{deCataldo&maulik&shen_2}.

\subsection{Structure}

In Section \ref{sc sheaves on surfaces} we study the properties of moduli spaces of sheaves over smooth projective surfaces, which is a class of objects that are crucial both in the theory of compact irreducible holomorphic symplectic manifolds (when the base surface is either a K3 or an abelian surface) or in the theory of Higgs bundle (that are intimately related to certain sheaves over ruled surface).

In Section \ref{sc OGrady} we review some aspects of the theory of compact irreducible holomorphic symplectic manifolds, focusing on the construction of O'Grady's exceptional examples in dimensions ten and six.

Fix a smooth projective curve $C$ and its canonical bundle $K$. We study in Section \ref{sc canonical ruled surface} some properties of the canonical ruled surface $\KK = \PP(\Oo_C \oplus K)$ as it  plays an important role in the theory of Higgs bundles, which we address in Section \ref{sc Higgs moduli spaces}. In the later, we study the moduli spaces of Higgs bundles for the groups $\GL_n$ and $\SL_n$, recalling in which cases they admit a symplectic resolution.

In Section \ref{sc degeneration of ten dimensional} we review the non-linear deformation described by Donagi, Ein and Lazarsfeld reproducing as well the degeneration of the holomorphic symplectic structure. These results provide a degeneration, as symplectic varieties, of the moduli space of sheaves over symplectic surfaces into the moduli space of $\GL_n$-Higgs bundles. In Section \ref{sc SL_n case} we describe the generalization to the $\SL_n$-case in great detail. 

Finally, in Section \ref{sc main result} we make use of all of the above to describe a degeneration of the O'Grady ten and six dimensional spaces into the symplectic resolution of, respectively, the Higgs moduli spaces for $\GL_2$ and $\SL_2$.

\subsubsection*{Acknowledgments}

The author wants to thank Peter Newstead for all his teachings and for sharing his passion about mathematics.

\section{Moduli spaces of sheaves on surfaces}
\label{sc sheaves on surfaces}

In this section we describe some properties of moduli spaces of sheaves in smooth projective surfaces, which will be the main objects in our study.

Consider a smooth projective surface $S$. We say that a coherent sheaf $\Ff$ on $S$ is of pure dimension $d$ when the dimension of its schematic support is $d$ and all subsheaves are supported on dimension $d$ subschemes. After equipping $S$ with a polarization $\H$, one can check that the corresponding Hilbert polynomial $P(\Ff,\H)$ has degree $d$. Setting the coefficient multiplied by $d!$ to be its $\H$-polarized rank $\rk(\Ff, \H)$, we say that $\Ff$ is {\it $\H$-stable} ({\it resp.} {\it $\H$-semistable}) if it is of pure dimension $d$ and every proper subsheaf $\Ff' \subset \Ff$ satisfies $P(\Ff',\H)/\rk(\Ff',\H) < P(\Ff,\H)/\rk(\Ff,\H)$ ({\it resp.} $P(\Ff',\H)/\rk(\Ff',\H) \leq P(\Ff,\H)/\rk(\Ff,\H)$) when $n \gg 0$. If $\Ff$ is semistable, one further say that it is polystable when it decomposes as a direct sum of stable sheaves. 

Gieseker \cite{gieseker} constructed the moduli space $\M_{S}^\H(p)$ of pure dimension $d$ $\H$-semistable sheaves with Hilbert polynomial $p$, a construction that was extended by Simpson \cite{simpson1} to any projective varity. We denote by $\M_S^H(p)^{\st}$ the locus $\H$-stable sheaves, and by $\M_S^H(p)^{\sps}$ the locus of strictly polystable sheaves.

In the case of a smooth surface $S$, the topological invariants of a sheaf $\Ff$ associated to the Hilbert polynomial are the $\H$-polarized rank $\rk(\Ff,H)$, the first Chern class $c_1(\Ff)$, and the Euler characteristic $\chi(\Ff)$ (which contains information on the previous invariants together with the second Chern class $c_2(\Ff)$). In order to simplify the computations, one introduces the {\it Mukai vector}, $v := \ch(\Ff) \cdot \sqrt{\Todd(S)}$. Observe that $v \in H^{2*}(S, \ZZ)$ and the cup product in cohomology provides a pairing on the $\ZZ$-module $H^{2*}(S, \ZZ)$, hence acquiring a lattice structure. The definition of the pairing is meant to satisfy 
\begin{equation} \label{eq v^2}
v^2 = \chi(\End(\Ff)).
\end{equation}
Since the Mukai vector contains all the information to reconstruct the Hilbert polynomial, we then abbreviate by $\M^{\H}_S(v)$ the moduli space of $\H$-semistable sheaves $S$ with Hilbert polynomial determined by the Mukai vector $v \in H^{2*}(S, \ZZ)$. It is always possible write the Mukai vector as the multiple of a primitive class,
$$
v = n \cdot v_0.
$$
When $v = v_0$ is itself primitive, all coherent sheaves with such Mukai vector are forcely stable, so $\M_S^\H(v_0)^\st = \M_S^\H(v_0)$. A sheaf $\Ff$ is simple if its automorphism group is given by scalar multiplication, in particular $\Hom_S(\Ff, \Ff) \cong \CC$ in that case. It can be shown that stable sheaves are simple by a standard argument.

Thanks to deformation theory, the tangent space to $\M_{S}^{\H}(v)$ at $\Ff$ corresponds with
\[
\Tt_\Ff \M_{S}^\H(v) = \Ext^1_{S}(\Ff, \Ff),
\]
and, by Serre duality, the associated cotangent space is
\[
\Tt^*_\Ff \M_{S}^\H(v) = \Ext^1_{S}(\Ff, \Ff \otimes K_S).  
\]

A {\it symplectic surface} is a surface $S$ endowed with a holomorphic symplectic form $\Omega_S$. This is only the case when $S$ is either a K3 or abelian surface. One can understand $\Omega_S$ as a non-vanishing section of the canonical bundle $K_S$ which is forcely trivial. Due to Serre duality, one has the identification
\[
\Hom_S(\Ff, \Ff) \cong \Ext_S^2(\Ff,\Ff)^*,
\]
and both spaces are one dimensional when $\Ff$ is simple, implying
\[
\dim_\CC \Ext^1_S(\Ff,\Ff) = v^2 + 2,
\]
as a consequence of \eqref{eq v^2}. This shows that the simple locus of $\M_S^\H(v)$ is smooth, and in particular $\M_S^\H(v_0)$ is itself smooth when $v_0$ is a primitive Mukai vector.  

Thanks to Serre duality, $H^2(S, \Oo_S)$ is dual to $H^0(S, K_S) \cong \CC$ in the case of symplectic surfaces. This was used by Mukai to construct a holomorphic symplectic form on $\M_{S}^\H(v)$.

\begin{theorem}[\cite{mukai1}]
Let $S$ be a symplectic surface. Then, the simple locus $\M_{S}^\H(v)$ is smooth and can be equipped with a holomorphic $2$-form $\Omega_\M$ defined by taking the trace of the Yoneda product composed with $\Omega_S$,
\[%begin{equation} \label{eq Mukai form}
\Ext^1_{S}(\Ff, \Ff) \wedge \Ext^1_{S}(\Ff, \Ff) \stackrel{\circ}{\longrightarrow} \Ext^2_{S}(\Ff, \Ff) \stackrel{\tr}{\longrightarrow}H^{2}(S,\Oo_{S}) \stackrel{\Omega_S(\cdot)}{\longrightarrow} \CC.
\]%end{equation}
Furthermore, $\Omega_\M$ is non-degenrate on the smooth locus of $\M_{S}^\H(v)$, defining a symplectic form there.
\end{theorem}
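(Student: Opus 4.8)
My plan is to obtain both conclusions — smoothness of the simple locus and the non-degenerate $2$-form on it — from Serre duality on $S$ together with the simpleness hypothesis. Write $M$ for the simple locus of $\M_S^\H(v)$, so that deformation theory gives $\Tt_\Ff M=\Ext^1_S(\Ff,\Ff)$ with obstructions in $\Ext^2_S(\Ff,\Ff)$. The point behind the smoothness already stated above is that these obstructions are traceless: the image of the obstruction class under $\tr\colon\Ext^2_S(\Ff,\Ff)\to H^2(S,\Oo_S)$ is the obstruction to deforming $\det\Ff$, which vanishes since the Picard scheme of $S$ is smooth. Now $\tr$ is Serre-dual to the unit $H^0(S,\Oo_S)\to\Hom_S(\Ff,\Ff)$, $1\mapsto\id_\Ff$, and the latter is an isomorphism when $\Ff$ is simple; hence $\tr$ is an isomorphism of one-dimensional spaces, its kernel (the traceless part) is zero, all obstructions vanish, and $M$ is smooth of dimension $v^2+2$.

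For the $2$-form I would argue pointwise first: for $a,b\in\Ext^1_S(\Ff,\Ff)$ set $\omega_\Ff(a,b)=\Omega_S\bigl(\tr(a\circ b)\bigr)$, with $\circ$ the Yoneda product. The key algebraic fact is that the trace is cyclic up to sign, $\tr(a\circ b)=(-1)^{|a|\,|b|}\tr(b\circ a)$, whence $\tr(a\circ b)=-\tr(b\circ a)$ in degree one and $\tr(a\circ a)=0$ in characteristic zero; thus $\omega_\Ff$ is a genuine element of $\wedge^2\Ext^1_S(\Ff,\Ff)^*=\wedge^2\Tt^*_\Ff M$. To see that these assemble into a holomorphic section of $\Omega^2_M$, pass to a quasi-universal family on $S\times U$ over an \'etale (or analytic) neighbourhood $U$ of a point of $M$ — the relative $\mathcal{E}xt$-sheaves of the family with itself being unaffected by the twist that may obstruct a global universal sheaf — use the Kodaira--Spencer isomorphism of $\Tt_U$ with the relative $\mathcal{E}xt^1$-sheaf, and post-compose the relative Yoneda pairing with the relative trace into $R^2\pi_{U*}\Oo_{S\times U}\cong H^2(S,\Oo_S)\otimes\Oo_U$ and then with $\Omega_S$. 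All the operations are algebraic and independent of the choices, so the result descends to $\Omega_\M\in H^0(M,\Omega^2_M)$.

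Non-degeneracy is then a restatement of Serre duality for the pair $(\Ff,\Ff)$, which provides a perfect pairing
\[
\Ext^1_S(\Ff,\Ff)\times\Ext^1_S(\Ff,\Ff\otimes K_S)\longrightarrow\Ext^2_S(\Ff,\Ff\otimes K_S)\stackrel{\tr}{\longrightarrow}H^2(S,K_S)\cong\CC
\]
whose first map is the Yoneda product; the isomorphism $K_S\cong\Oo_S$ determined by $\Omega_S$ identifies this pairing with $\omega_\Ff$, so $\Omega_\M$ is non-degenerate at every point of $M$. The step I expect to require the most care — and the one not literally visible in the statement — is verifying that $\Omega_\M$ is \emph{closed}, which is what licenses calling it symplectic; this can be done by Mukai's original computation with the Atiyah class of the universal object, or via later descriptions of $\Omega_\M$ in terms of its Chern character.
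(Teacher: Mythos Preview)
Your argument is correct and follows the classical route due to Mukai. Note, however, that the paper does not actually prove this theorem: it is stated as a cited result, preceded only by the brief observation that Serre duality gives $\Hom_S(\Ff,\Ff)\cong\Ext^2_S(\Ff,\Ff)^*\cong\CC$ for simple $\Ff$, so that $\dim\Ext^1_S(\Ff,\Ff)=v^2+2$ and the simple locus is smooth. Your treatment is strictly more detailed --- you explain \emph{why} the constant tangent dimension yields smoothness (tracing the obstruction into $H^2(S,\Oo_S)$ and using that the trace is Serre-dual to the unit map, hence an isomorphism for simple sheaves), you spell out how the pointwise pairing globalizes via the relative $\mathcal{E}xt$-sheaves of a quasi-universal family, and you correctly isolate closedness as the step requiring genuine extra input. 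All of this is consistent with, and fills in, what the paper delegates to the reference.
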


A {\it Poisson surface} is a surface $S$ equipped with Poisson bi-vector $\Theta_S$, a non-zero section of $\bigwedge^2 \Tt_S \cong K_S^{-1}$. The above construction was generalized independently by Bottaccin \cite{bottacin_1} and Markman \cite{markman} to the case of Poisson surfaces.

\begin{theorem}[\cite{bottacin_1} and \cite{markman}] \label{tm Bottaccin-Markman Poisson str}
Let $S$ be is equipped with a Poisson bi-vector $\Theta_S$. Then, one can define a (closed although possibly degenerate) Poisson structure $\Theta_\M$ on the smooth locus of $\M_{S}^\H(v)$ by taking the trace of the Yoneda product composed with $\Theta_S$,
\[%\begin{equation} \label{eq Bottacin form}
\Ext^1_{S}(\Ff, \Ff\otimes K_S) \wedge \Ext^1_{S}(\Ff, \Ff \otimes K_S) \stackrel{\circ}{\longrightarrow} \Ext^2_{S}(\Ff, \Ff\otimes K_S^2) \stackrel{\tr}{\longrightarrow}H^{2}(S,K_S^2) \stackrel{\langle \cdot , \Theta_S \rangle}{\longrightarrow} H^{2}(S,K_S) \cong \CC.  
\]%end{equation}
\end{theorem}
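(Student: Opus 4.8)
The plan is to mimic Mukai's construction from the previous theorem, replacing the symplectic form $\Omega_S$ by the Poisson bi-vector $\Theta_S$ and keeping careful track of the twists by $K_S$. First I would recall that, by deformation theory and Serre duality as stated above, the tangent and cotangent spaces to $\M_S^\H(v)$ at a simple sheaf $\Ff$ are $\Ext^1_S(\Ff,\Ff)$ and $\Ext^1_S(\Ff,\Ff\otimes K_S)$ respectively; a Poisson structure is by definition a bi-vector, i.e.\ a section of $\bigwedge^2 \Tt\M$, so it is precisely a pairing on the cotangent space $\Ext^1_S(\Ff,\Ff\otimes K_S)$, which is exactly the source of the composition displayed in the statement. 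The composition itself is well defined: the Yoneda product $\Ext^1_S(\Ff,\Ff\otimes K_S)\otimes \Ext^1_S(\Ff,\Ff\otimes K_S)\to \Ext^2_S(\Ff,\Ff\otimes K_S^2)$ is standard, the trace map $\Ext^2_S(\Ff,\Ff\otimes K_S^2)\to H^2(S,K_S^2)$ comes from the trace $\End(\Ff)\to\Oo_S$ tensored with $K_S^2$, and finally contraction with $\Theta_S\in H^0(S,K_S^{-1})$ sends $H^2(S,K_S^2)$ to $H^2(S,K_S)\cong\CC$, the last isomorphism being Serre duality against $H^0(S,\Oo_S)$. So I would first check in detail that each of these three arrows is defined over the whole smooth (indeed simple) locus and varies holomorphically in $\Ff$, hence glues to a global section $\Theta_\M$ of $\bigwedge^2\Tt\M$ on the smooth locus.

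Next I would verify the stated properties of $\Theta_\M$: that it is a genuine Poisson bi-vector (the Jacobi identity, phrased as $[\Theta_\M,\Theta_\M]=0$ in the Schouten bracket, equivalently $d$ of the associated (possibly degenerate) form in the appropriate sense, which is the content of the parenthetical ``closed''), and that one does not expect non-degeneracy in general since $\Theta_S$ may vanish on a divisor. For the Jacobi/closedness claim the cleanest route is the relative one: put everything in families over the moduli space, using the universal sheaf (or, if it fails to exist globally, an étale-local or twisted universal sheaf, which suffices since the statement is local and the gerbe obstruction does not affect $\End$), realise $\Theta_\M$ as the image of $\Theta_S$ under a map of sheaves of Lie algebras / a chain-level construction à la Markman, and deduce $[\Theta_\M,\Theta_\M]=0$ from $[\Theta_S,\Theta_S]=0$ (which holds automatically on a surface, every bi-vector on a surface being Poisson for dimension reasons). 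Alternatively one reduces to Mukai's theorem on the open locus where $\Theta_S$ is invertible — there $K_S$ is trivialised and the construction literally becomes Mukai's, giving closedness on a dense open set, whence everywhere by continuity/holomorphicity.

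The main obstacle I anticipate is the rigorous proof of the Jacobi identity / closedness: unlike the non-degenerate symplectic case, where $d\Omega_\M=0$ can be checked by an explicit Čech computation with the Yoneda cocycles, here one must argue functorially because the form is degenerate and one cannot simply invert it. This is exactly the technical heart of Bottacin's and Markman's papers, and I would either reproduce Markman's chain-level argument (presenting $\Ext^1$ via the Atiyah class / Kodaira--Spencer map and checking that the Yoneda bracket corresponds to an honest Lie bracket on a complex computing the deformations, under which contraction with $\Theta_S$ is a morphism) or, if one is content to cite, simply invoke \cite{bottacin_1, markman} for this step and only spell out in detail the construction of the bi-vector and the identification of the relevant cohomology groups. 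A secondary, more routine point is handling the (non-)existence of a universal sheaf: I would note that all constructions are local on $\M_S^\H(v)$ and insensitive to twisting by a line bundle pulled back from the base, so a quasi-universal or twisted-universal family is enough. Everything else — the dimension count, the Serre-duality identifications, the holomorphic dependence — is bookkeeping parallel to the symplectic case already treated.
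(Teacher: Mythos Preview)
The paper does not give its own proof of this theorem: it is stated as a cited result, attributed in the header to \cite{bottacin_1} and \cite{markman}, and the text moves on immediately afterwards with no proof environment. So there is nothing in the paper to compare your proposal against; the author is simply quoting Bottacin's and Markman's theorem for later use (notably in the proof of Theorem~\ref{tm relative symplectic form}).

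Your sketch is a reasonable outline of how the original references proceed, and you correctly identify the Jacobi identity as the genuine content (and correctly note that on a surface every bi-vector is Poisson, so the issue is transporting this to the moduli side). If you were actually asked to supply a proof here, the appropriate response would be either to cite \cite{bottacin_1, markman} as the paper does, or to reproduce one of their arguments in full; your ``reduce to Mukai on the locus where $\Theta_S$ is invertible and extend by continuity'' shortcut for closedness is fine for the bivector's integrability but would need care, since closedness of a degenerate $2$-form is not the same data as $[\Theta_\M,\Theta_\M]=0$ unless you are precise about what ``closed'' means in the degenerate setting.
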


On a smooth surface $S$, every pure dimension $1$ sheaf $\Ff$ has a locally free resolution of length $2$. This allow us to define the {\it fitting support} of a sheaf $\Ff$ as the determinant associated to this resolution, which we denote by $\supp(\Ff)$. Observe that  
\[
c_1(\Ff) = \left [ \supp(\Ff) \right ]. 
\]
We then see that the Mukai vector of a pure dimension $1$ sheaf $\Ff$ on a smooth surface $S$ with canonical sheaf $K_S$ takes the form
\[%begin{equation} \label{eq Mukai vecotr of pure dimension sheaves}
v = (v_1, v_2, v_3) = \left ( 0, \left [ \supp(\Ff) \right ], \chi(\Ff) + \frac{1}{2} \left [ K_S \right ] \cdot \left [ \supp(\Ff) \right ]  \right ),
\]%end{equation}
and the pairing is simply given by the intersection of the supports,
\[
\langle v , v' \rangle = v_2 \cdot v'_2 = \left [ \supp(\Ff) \right ] \cdot \left [ \supp(\Ff') \right ].
\]
Hence, the square of these Mukai vectors are determined by the genus of the support
\[
v^2 = 2 \cdot g \left ( \supp(\Ff) \right ) - 2.
\]

For this class of Mukai vectors, the associated moduli spaces can be equipped with a fibration,
\begin{equation} \label{eq LePoitier}
\map{\M_{S}^\H(v)}{\Hilb_S([nC])}{\Ff}{\supp(\Ff),}{}
\end{equation}
to the Hilbert scheme classifying dimension $1$ subschemes of $S$ with first Chern class equal to the second component of the Mukai vector. The fibre corresponding to a curve $A \in \Hilb_S([nC])$ amounts to the Simpson compactified Jacobian $\overline{\Jac}^{\, \H}_{A}(a)$, classifying $\H|_A$-semistable pure dimension $1$ sheaves on $A$ of rank $1$ and degree $a$.

\begin{remark} \label{rm Beauville-Mukai}
For a smooth projective K3 surface $X$ one has that $H^1(X,\Oo_X)$ vanishes so $\Pic(X)$ is discrete and embeds into $H^2(X,\ZZ)$. Then, the second component of the Mukai vector fixes the determinant and $\Hilb_X([nC])$ is the linear system $|nC|$. After the work of Beauville \cite{beauville_fibr} the fibres of \eqref{eq LePoitier} are Lagrangian with respect to the Mukai form $\Omega_\M$. This gives rise to an algebraically completely integrable system 
\begin{equation} \label{eq Mukai system}
\M_X^\H(v) \longrightarrow |nC|
\end{equation}
called the {\it Beauville--Mukai system}. 
\end{remark}

\section{O'Grady's exceptional examples}
\label{sc OGrady}

It is a classical statement that K3 surfaces and abelian surfaces are the only smooth projective surfaces that can be endowed with a holomorphic symplectic structure. Abelian surfaces are not simply connected but K3 are, so the later constitute the only example of compact irreducible holomorphic symplectic manifolds in dimension $2$. Beauville \cite{beauville_IHSM} produced two series of irreducible holomorphic symplectic manifold in any dimension, namely Hilbert schemes of points on a K3 surface, $\Hilb^n(X)$, and generalized Kummer varieties of an abelian surface, $\Kum^n(A)$. As we have seen in Section \ref{sc sheaves on surfaces}, Mukai \cite{mukai1} showed that the moduli space of torsion-free sheaves on a symplectic surface ({\it i.e.} K3 or abelian) can be equipped with a holomorphic symplectic form, so, whenever these moduli spaces are smooth they become examples of holomorphic symplectic manifolds. Nevertheless these spaces do not provide new examples of irreducible holomorphic symplectic manifolds since Yoshioka \cite{yoshioka_1, yoshioka_2} showed that, when $\M_X^\H(v)$ is smooth, it is deformation equivalent to a Hilbert scheme $\Hilb^n(X)$, and similar results hold in the case of $\M_A^\H(v)$ and generalized Kummer varieties. Hence, $\Hilb^n(X)$ and $\Kum^n(A)$ were the only known examples of irreducible holomorphic symplectic manifolds until the appearance of O'Grady's exceptional examples in dimension ten \cite{OGrady_1} and six \cite{OGrady_2}, whose constructions rely on the existence of symplectic desingularization of singular moduli spaces.

Consider a singular variety $M$ equipped with a holomorphic symplectic form $\Omega$ defined over the smooth locus $M^\smooth$. After Beauville \cite{beauville_sing} a symplectic desingularization of $M$ is a resolution of singularities $\widetilde{M} \to M$ equipped with a holomorphic symplectic form $\widetilde{\Omega}$ that extends the pull-back of $\Omega$ over the preimage of the smooth locus. As proved in \cite{beauville_sing}, when a symplectic desingularization exists, it is unique up to isomorphism.

For a particular choice of a primitive Mukai vector $v_0$ on a K3 surface $X$ that satisfies $v_0^2 = 2$, O'Grady considered the moduli space $\M_X^\H(2v_0)$, which is a singular variety of dimension $(2v_0)^2 + 2 = 10$. Using Kirwan desingularization procedure \cite{kirwan}, O'Grady \cite{OGrady_1} constructed a symplectic desingularization $\widetilde{\M}_X^\H(2v_0)$ and showed that it is an irreducible holomorphic symplectic variety.

Lehn and Sorger \cite{lehn&sorger} generalized O'Grady's symplectic desingularization of $\M_S^\H(2v_0)$ to any choice of primitive vector $v_0$ with $v_0^2= 2$ on a symplectic surface $S$. Furthermore, they showed that the symplectic resolution $\widetilde{\M}_S^\H(2v_0)$ of $\M_S^\H(2v_0)$ can be achieved by a single blow-up at the reduced subscheme of the singular locus which coincides with the locus of strictly polystable sheaves $\M_S^\H(2v_0)^\sps$.

It is a natural question to ask if one can obtain more holomorphic symplectic varieties from symplectic desingularization of moduli spaces of the form $\M_S^\H(nv_0)$. This was answer negatively by Kaledin, Lehn and Sorger \cite{kaledin&lehn&sorger} and Kiem \cite{kiem}, who proved non-existence of symplectic resolutions of $\M_S^\H(nv_0)$ for choices of $n$ and $v_0$ different than $n = 2$ and $v_0^2 = 2$. The work of Kaledin, Lehn and Sorger \cite{kaledin&lehn&sorger} is based in a local study of the singularities base on deformation theory, while the work of Kiem \cite{kiem} relies on the computation of the stringy $E$-functions of the moduli spaces.

We summarize all this in the following theorem.

\begin{theorem}[\cite{OGrady_1, OGrady_2, kiem, kaledin&lehn&sorger, lehn&sorger, perego&rapagnetta}] \label{tm resolution for surfaces}
Suppose $v = n v_0$ with $v_0^2 \geq 2$ and $n \geq 2$. Let $\H$ be a $v$-generic ample divisor on the symplectic surface $S$. Then $\M_S^\H(v)$ is a variety with symplectic singularities. 

Furthermore, if and only if $n = 2$ and $v_0^2 = 2$ there exists a symplectic resolution of $\M_S^\H(v)$ which can be obtained by a single blow-up at the reduced subscheme of the singular locus, which amounts to $\M_S^\H(v)^{\sps}$, the locus of strictly polystable sheaves. 
\end{theorem}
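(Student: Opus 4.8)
The plan is to assemble Theorem \ref{tm resolution for surfaces} from the literature rather than reprove it from scratch, since each assertion is a published result; what remains is to package the inputs carefully and reconcile the hypotheses. First I would establish the ``symplectic singularities'' claim. By the deformation-theoretic discussion preceding the theorem, the smooth locus of $\M_S^\H(v)$ coincides with the simple locus (a stable sheaf is simple, and a strictly polystable sheaf has a non-scalar automorphism, so it is a singular point), and there the Mukai $2$-form $\Omega_\M$ of Theorem \ref{mukai1-theorem} is a genuine holomorphic symplectic form. To upgrade this to \emph{symplectic singularities} in the sense of Beauville \cite{beauville_sing}, one needs that $\Omega_\M$ extends to a holomorphic (possibly degenerate) $2$-form on any resolution; this is exactly the content of the étale-local model of the singularities of $\M_S^\H(nv_0)$ computed by Kaledin--Lehn--Sorger \cite{kaledin&lehn&sorger} (and, for the normality and factoriality statements, Perego--Rapagnetta \cite{perego&rapagnetta}), which shows the singularities are locally those of a symplectic quotient $\Ext^1(\Ff,\Ff)\slash\!\!\slash \Aut(\Ff)$ for the polystable $\Ff$, hence symplectic. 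Here $v$-genericity of $\H$ is used to guarantee that the only non-stable semistable sheaves are the strictly polystable ones and that there is a well-behaved étale slice; I would state this as the reason for that hypothesis.

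Next, the ``if'' direction. For $n=2$, $v_0^2=2$, the existence of a symplectic resolution is O'Grady's construction \cite{OGrady_1} (dimension ten, on a K3) and \cite{OGrady_2} (dimension six, on an abelian surface), extended to arbitrary primitive $v_0$ with $v_0^2=2$ on any symplectic surface by Lehn--Sorger \cite{lehn&sorger}. The stronger statement that the resolution is obtained by a \emph{single blow-up of the reduced singular locus} is precisely the main theorem of \cite{lehn&sorger}: they identify $\Sing(\M_S^\H(2v_0))$ with $\M_S^\H(2v_0)^\sps$ as sets, show this locus is smooth (it is a quotient of a product of two moduli spaces associated to $v_0$, which are themselves smooth since $v_0$ is primitive), and prove the blow-up along its reduced structure carries an extension of $\Omega_\M$. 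I would also invoke Beauville's uniqueness \cite{beauville_sing} to say this is \emph{the} symplectic resolution. For the ``only if'' direction I would cite Kaledin--Lehn--Sorger \cite{kaledin&lehn&sorger}, whose local analysis rules out a symplectic resolution whenever $n\ge 3$, together with Kiem \cite{kiem} and Kaledin--Lehn--Sorger again for $n=2$, $v_0^2\ge 4$, where the stringy $E$-function (or the local model) obstructs existence.

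The key steps, in order: (i) identify $\Sing(\M_S^\H(v))$ with $\M_S^\H(v)^\sps$ and describe its étale-local structure via \cite{kaledin&lehn&sorger, perego&rapagnetta}, using $v$-genericity; (ii) deduce the symplectic-singularities statement by checking the local models are symplectic cones; (iii) for $n=2$, $v_0^2=2$, quote \cite{lehn&sorger} (building on \cite{OGrady_1, OGrady_2}) that a single blow-up of the reduced singular locus resolves symplectically, and \cite{beauville_sing} for uniqueness; (iv) for all other $(n,v_0)$ with $n\ge 2$, $v_0^2\ge 2$, quote the non-existence results of \cite{kaledin&lehn&sorger, kiem}. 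The main obstacle is genuinely expository rather than mathematical: the cited papers work under slightly different standing hypotheses — \cite{OGrady_1, OGrady_2} on a \emph{fixed} K3 or abelian surface with a \emph{specific} Mukai vector, \cite{lehn&sorger} for general primitive $v_0$ with $v_0^2=2$ on a symplectic surface, \cite{kaledin&lehn&sorger, kiem} for the obstruction — so the real work is to verify that $v$-genericity of $\H$ (as opposed to the generic-surface or suitable-$v$ hypotheses of the sources) is the correct and sufficient condition making $\M_S^\H(v)^\smooth = \M_S^\H(v)^\st$ and making the singular locus smooth and the slice model valid, and that the surface is allowed to be either K3 or abelian throughout. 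I would also note the mild subtlety that when $v_0^2=2$ the product-of-moduli-spaces description of the singular locus involves $\M_S^\H(v_0)$ with $v_0^2=2$, which is a smooth fourfold (K3-type) or, for abelian surfaces, the fibre of an Albanese map, so that the blow-up center is indeed smooth; making this explicit is the one point where a short argument, rather than a citation, may be warranted.
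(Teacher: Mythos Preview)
Your proposal is correct and matches the paper's approach: the paper does not give a proof of this theorem at all but merely states it as a summary of the cited literature (the text immediately preceding it reads ``We summarize all this in the following theorem''), so your plan to assemble the result from \cite{OGrady_1, OGrady_2, kiem, kaledin&lehn&sorger, lehn&sorger, perego&rapagnetta} is exactly what is intended. Your breakdown of which reference supplies which ingredient is accurate and, if anything, more detailed than what the paper itself provides.
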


The symplectic resolutions appearing above allowed O'Grady to provide in \cite{OGrady_1} the first example of irreducible holomorphic symplectic manifold which is not deformation equivalent to $\Hilb^n(X)$ or $\Kum^n(A)$. Given a K3 surface $X$ and a particular choice of the primitive Mukai vector $v_0$ satisfying $v_0^2= 2$, O'Grady showed in \cite{OGrady_1} that the symplectic resolution $\widetilde{\M}^\H_X(2v_0)$ of the ten dimensional moduli space $\M^\H_X(2v_0)$ has second Betti number strictly bigger than $23$ and therefore is not deformation equivalent to the previously known irreducible holomorphic symplectic manifolds $\Hilb^5(X)$ and $\Kum^n(A)$, whose Betti numbers are smaller or equal to $23$. Rapagnetta \cite{rapagnetta} showed that the second Betti number of $\widetilde{\M}_X^\H(2v_0)$ is indeed $24$. Perego and Rapagnetta \cite{perego&rapagnetta} proved that the symplectic resolutions $\widetilde{\M}_X^\H(2v_0)$, obtained by Kaledin--Lehn--Sorger \cite{kaledin&lehn&sorger} for any choice of primitive Mukai vector $v_0$ with $v_0^2=2$, are all deformation equivalent.

For future notation convenience, we pick $X$ to be a K3 surface containing a smooth projective curve $C \subset X$ of genus $2$. Note that $C$ is an ample divisor of $X$ with self-intersection $C^2 = 2$, so the primitive Mukai vector $(0,C,\chi)$ satisfies $(0,C,-1)^2 = C^2 = 2$. We denote $\M_{\ten} := \M_X^{C}(0,2C,-2)$ and we refer to its symplectic resolution $\widetilde{\M}_{\ten}$ as the {\it O'Grady's ten dimensional space} (although, our $\widetilde{\M}_\ten$ is only deformation equivalent to the variety studied in \cite{OGrady_1}). Let us summarize all the above in the following theorem.

\begin{theorem}[\cite{OGrady_1, rapagnetta, lehn&sorger, perego&rapagnetta}] \label{tm ten-dimensional}
O'Grady's ten dimensional space $\widetilde{\M}_{\ten}$ is an irreducible holomorphic symplectic manifold with $b_2(\widetilde{\M}_{\ten}) = 24$, and therefore it is not deformation equivalent to the Hilbert scheme of lenth $5$ on a K3 surface, nor to the ten dimensional Kummer variety.

For any primitive Mukai vector satisfying $v_0^2 = 2$, there exists a symplectic resolution of $\M^\H_S(2v_0)$ and it is deformation equivalent to O'Grady's ten dimensional space $\widetilde{\M}_{\ten}$.
\end{theorem}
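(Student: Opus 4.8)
\emph{Proof strategy.} The statement packages together several results from the literature, so the plan is to identify the right inputs and fit them together. First I would note that $\M_\ten = \M_X^C(0,2C,-2)$ is precisely an instance of $\M_S^\H(2v_0)$ as in Theorem \ref{tm resolution for surfaces}: the Mukai vector is $v = 2v_0$ with $v_0 = (0,C,-1)$ primitive and $v_0^2 = C^2 = 2$, and --- taking $X$ with $\Pic(X) = \ZZ\cdot C$, or replacing $C$ by a generic ample representative --- the polarization is $v$-generic. Hence Theorem \ref{tm resolution for surfaces} provides the resolution $\pi\colon \widetilde{\M}_\ten \to \M_\ten$, realized as the blow-up of $\M_\ten$ along the reduced singular locus $\M_\ten^{\sps}$, together with a holomorphic symplectic form $\widetilde\Omega$ on $\widetilde{\M}_\ten$ extending the pull-back of the Mukai form $\Omega_\M$ of \cite{mukai1}.

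It then remains to check that $\widetilde{\M}_\ten$ is an \emph{irreducible} holomorphic symplectic manifold. Smoothness holds because $\pi$ is a resolution, and compactness (together with the K\"ahler condition) follows from projectivity: $X$ is projective, so $\M_\ten$ is projective by Gieseker's construction \cite{gieseker}, and the blow-up of a projective variety along a closed subscheme is again projective. The two substantive inputs, both due to O'Grady \cite{OGrady_1}, are that $\widetilde{\M}_\ten$ is simply connected and that its space of holomorphic $2$-forms is one-dimensional (so $\widetilde\Omega$ is unique up to scaling); these are proved by an explicit study of the Kirwan-type desingularization and of its exceptional divisors, and I would import them as black boxes.

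For the topological comparison I would use Rapagnetta \cite{rapagnetta}: O'Grady already showed $b_2(\widetilde{\M}_\ten) > 23$, and Rapagnetta refines this to the equality $b_2(\widetilde{\M}_\ten) = 24$. Since the second Betti number is invariant under deformation (indeed under homeomorphism), while $b_2(\Hilb^5(X)) = 23$ and $b_2(\Kum^5(A)) = 7$, the manifold $\widetilde{\M}_\ten$ is deformation equivalent neither to the length-$5$ Hilbert scheme of a K3 surface nor to the ten-dimensional generalized Kummer variety.

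Finally, the last assertion combines Lehn--Sorger \cite{lehn&sorger}, who carry out the same single blow-up construction for an arbitrary symplectic surface, ample class $\H$, and primitive Mukai vector $v_0$ with $v_0^2 = 2$, producing a symplectic resolution $\widetilde{\M}_S^\H(2v_0)$, with Perego--Rapagnetta \cite{perego&rapagnetta}, who prove that these resolutions all lie in a single deformation class; that class therefore coincides with the one containing $\widetilde{\M}_\ten$. The hard part of the argument is not in this assembly but inside the cited works --- O'Grady's simple connectedness and the one-dimensionality of the $2$-forms, Rapagnetta's Betti number computation, and Perego--Rapagnetta's deformation equivalence; everything else amounts to checking that $\M_\ten$ satisfies the hypotheses of Theorem \ref{tm resolution for surfaces} and to tracking deformation invariance of $b_2$.
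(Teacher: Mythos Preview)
Your proposal is correct and follows essentially the same approach as the paper: the theorem is stated there as a summary of results from the literature, with the surrounding discussion attributing the irreducibility and $b_2>23$ to \cite{OGrady_1}, the exact value $b_2=24$ to \cite{rapagnetta}, the blow-up resolution for arbitrary $v_0$ with $v_0^2=2$ to \cite{lehn&sorger}, and the deformation equivalence to \cite{perego&rapagnetta}. Your write-up simply makes this assembly explicit, and no separate proof is given in the paper itself.
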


Consider now an abelian surface $A$. Let us denote its dual variety by $\widehat{A} := \M_A^\H(1,0,0)$ (for any choice of polarization $\H$), which is a fine moduli space for the classification of topologically trivial line bundles on $A$. Therefore, there exists a universal bundle $\P \to A \times \widehat{A}$ named the {\it Poincar\'e bundle}. Let us define the projections of the first factor $\f : A \times \widehat{A} \to A$ and to the second factor $\g : A \times \widehat{A} \to \widehat{A}$. Mukai considered in \cite{mukai_fourier} the corresponding integral functor, 
\begin{equation} \label{eq FM in A}
\morph{D^b(A)}{D^b(\widehat{A})}{\Ff^\bullet}{R \g_{!}(\P \otimes \f^*\Ff^\bullet),}{}{\Phi^{\P}}
\end{equation}
showing that it gives a derived equivalence. Fixing $\Ff_0 \in \M_S^\H(v)$, on gets the associated {\it Albanese morphism}
\begin{equation} \label{eq det and dual det for abelian surfaces}
\morph{\M_A^\H(v)}{A \times \widehat{A}}{\Ff}{( \det_{\widehat{A}}(\Phi^\P(\Ff)) \otimes \det_{\widehat{A}}(\Phi^\P(\Ff_0))^{-1} \, , \, \det_A(\Ff) \otimes \det_A(\Ff_0)^{-1} ).}{}{\a_{\Ff_0}}
\end{equation}
The pull-back through $\a_{\Ff_0}$ of the symplectic form $\Omega_{A \times \widehat{A}}$, provides a non-trivial holomorphic symplectic form on $\M_A^\H(v)$. This implies that holomorphic symplectic forms on this space are not unique up to scaling, so we can not obtain an irreducible holomorphic symplectic manifold from it. In view of this, one considers the subvariety given by the kernel of the Albanese maorphism,
\begin{equation} \label{eq definition of N}
\N_A^\H(v) := \a_{\Ff_0}^{-1} \left ( \id_A, \Oo_A \right ),
\end{equation}
which was studied in detail by Yoshioka \cite{yoshioka_1, yoshioka_2}. We denote by $\N_A^\H(v)^{\st}$ the locus of stable sheaves.

Recall the fibration \eqref{eq LePoitier} obtained by considering the fitting support of the sheaves. 
After the work of Beauville \cite{beauville_fibr} the fibres of this map are Lagrangian. One can check that his picture restricts naturally to the reduced moduli space $\N_A^\H(0,nC, \chi)$ over abelian surfaces,
\begin{equation} \label{eq reduced Mukai system}
\map{\N_A^\H(0,nC,\chi)}{|nC|}{\Ff}{\supp(\Ff),}{}
\end{equation}
giving rise to an algebraic completely integrable system that we call the {\it reduced Beauville-Mukai system}.

It can be shown that the Mukai form restricts to a symplectic form on $\N_A^\H(v)$ which is uniqe up to scaling. Furthermore, a symplectic resolution of $\M_A^\H(v)$ gives rise to a symplectic resolution of $\N_A^\H(v)$ in the cases studied in Theorem \ref{tm resolution for surfaces}, and that such a resolution only exists when the corresponding resolution for $\M_A^\H(v)$ exists as well. We then have an analogous to Theorem \ref{tm resolution for surfaces}.

\begin{theorem}[\cite{OGrady_2, kaledin&lehn&sorger, lehn&sorger}]\label{tm resolution for abelian surfaces}
Suppose $v = n v_0$ with $v_0$ primitive, $v_0^2 \geq 2$ and $n \geq 2$. Let $\H$ be a $v$-generic ample divisor on the abelian surface $A$. Then $\N_A^\H(v)$ is a variety with symplectic singularities. 

Furthermore, if and only if $n = 2$ and $v_0^2 = 2$ there exists a symplectic resolution of $\N_A^\H(v)$ which can be which can be obtained by a single blow-up at the reduced subscheme of the singular locus, which amounts to $\N_A^\H(v)^{\sps}$, the locus of strictly polystable sheaves.
\end{theorem}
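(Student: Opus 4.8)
The plan is to deduce the statement from Theorem~\ref{tm resolution for surfaces}, applied to the abelian surface $A$ (which is in particular a symplectic surface), by transporting it along the Albanese morphism $\a_{\Ff_0}\colon\M_A^\H(v)\to A\times\widehat{A}$ of \eqref{eq det and dual det for abelian surfaces}. The structural input I would set up first is that $A\times\widehat{A}$ acts on $\M_A^\H(v)$ by $(a,\Ll)\cdot\Ff=t_a^*\Ff\otimes\Ll$, that $\a_{\Ff_0}$ is equivariant for a surjective isogeny $\phi_v\colon A\times\widehat{A}\to A\times\widehat{A}$ depending only on $v$ --- this uses that the Fourier--Mukai transform \eqref{eq FM in A} intertwines translations with twists by degree-zero line bundles --- and hence that the action map $\mu\colon(A\times\widehat{A})\times\N_A^\H(v)\to\M_A^\H(v)$, $((a,\Ll),\Ff)\mapsto t_a^*\Ff\otimes\Ll$, is a finite étale cover, Galois with group $\ker\phi_v$; this is the analysis carried out by Yoshioka \cite{yoshioka_1, yoshioka_2}. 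I would also record two companion facts, following the same source: $\mu^*\Omega_\M$ splits orthogonally as $p_1^*\Omega_{A\times\widehat{A}}+p_2^*\Omega_\N$, where $\Omega_\N$ is the restriction of the Mukai form to $\N_A^\H(v)$ (so $\Omega_\N$ is non-degenerate on $\N_A^\H(v)^\st$), and translating and twisting preserve polystability and strict polystability, so $\mu^{-1}(\M_A^\H(v)^\sps)=(A\times\widehat{A})\times\N_A^\H(v)^\sps$.

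Granting this, the symplectic-singularities assertion is immediate: having symplectic singularities is an analytic-local condition, and is unaffected by a product with a smooth symplectic variety, so since $\M_A^\H(v)$ is a symplectic variety by Theorem~\ref{tm resolution for surfaces}, its finite étale cover $(A\times\widehat{A})\times\N_A^\H(v)$ is one, hence so is $\N_A^\H(v)$, with global $2$-form $\Omega_\N$.

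For the ``if'' part I would invoke Theorem~\ref{tm resolution for surfaces} with $n=2$ and $v_0^2=2$: it gives the symplectic resolution $\Blow_{\M_A^\H(v)^\sps}\M_A^\H(v)\to\M_A^\H(v)$ and identifies $\M_A^\H(v)^\sps$ with the reduced singular locus, whence, $\mu$ being étale, the reduced singular locus of $\N_A^\H(v)$ is $\N_A^\H(v)^\sps$. Blowing up commutes with the smooth morphism $\mu$ and with the projection to the second factor, so $(A\times\widehat{A})\times\Blow_{\N_A^\H(v)^\sps}\N_A^\H(v)$ is the étale $\ker\phi_v$-cover of $\Blow_{\M_A^\H(v)^\sps}\M_A^\H(v)$; as an étale cover of a smooth variety carrying an everywhere non-degenerate closed $2$-form it is smooth symplectic, and by the orthogonal splitting above the factor $\Blow_{\N_A^\H(v)^\sps}\N_A^\H(v)$ is smooth and carries a symplectic form extending $\Omega_\N$, i.e.\ it is a symplectic resolution. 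For the ``only if'' part, were there a symplectic resolution $\widetilde{\N}\to\N_A^\H(v)$, then $(A\times\widehat{A})\times\widetilde{\N}$ would be a symplectic resolution of $(A\times\widehat{A})\times\N_A^\H(v)$; since $\mu^*\Omega_\M$ is the product form above and hence $\ker\phi_v$-invariant, uniqueness of symplectic resolutions \cite{beauville_sing} lifts the $\ker\phi_v$-action --- freely, as it acts freely on the cover and the resolution is an isomorphism over the invariant dense smooth locus --- and the quotient would be a symplectic resolution of $\M_A^\H(v)$, forcing $n=2$ and $v_0^2=2$ by Theorem~\ref{tm resolution for surfaces}. (Equivalently, one may argue locally: the analytic germ of $\N_A^\H(v)$ at a strictly polystable point is that of $\M_A^\H(v)$ with a smooth symplectic factor stripped off, so the local obstruction of Kaledin--Lehn--Sorger \cite{kaledin&lehn&sorger}, or Kiem's stringy $E$-function computation \cite{kiem}, applies directly.)

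The main obstacle is the first step --- showing $\mu$ is a finite étale Galois cover and establishing the splitting $\mu^*\Omega_\M=p_1^*\Omega_{A\times\widehat{A}}+p_2^*\Omega_\N$ --- which is where Yoshioka's Fourier--Mukai analysis does the real work. Once $\M_A^\H(v)$ is realized as the quotient of $(A\times\widehat{A})\times\N_A^\H(v)$ by a free finite group action preserving the symplectic forms, everything downstream is a routine transfer of Theorem~\ref{tm resolution for surfaces} through a smooth morphism, and the blow-up description together with the local picture underpinning both implications is precisely what O'Grady \cite{OGrady_2} and \cite{lehn&sorger, kaledin&lehn&sorger} supply.
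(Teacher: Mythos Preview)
Your proposal is correct and follows the same line the paper indicates: the paper does not give a detailed proof of this theorem but, in the paragraph immediately preceding it, sketches exactly your reduction --- the Mukai form restricts to $\N_A^\H(v)$, a symplectic resolution of $\M_A^\H(v)$ induces one of $\N_A^\H(v)$, and conversely --- citing \cite{OGrady_2, lehn&sorger, kaledin&lehn&sorger} for the details. Your write-up makes this reduction precise via Yoshioka's \'etale-cover description of the Albanese morphism, which is the standard mechanism and is implicit in the paper's reference to \cite{yoshioka_1, yoshioka_2}; the only caveat is that the uniqueness of symplectic resolutions you invoke from \cite{beauville_sing} is stated in the paper but is in fact more delicate in general, so your parenthetical local argument (transferring the Kaledin--Lehn--Sorger obstruction through the \'etale local isomorphism) is the safer route for the ``only if'' direction.
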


O'Grady \cite{OGrady_2} provided another example of irreducible holomorphic symplectic manifolds by desingularizing $\N_A^\H(2,0,2\chi)$, in this case of dimension $6$. He also showed that its second Betti number is $8$, different from $\Hilb^3(X)$ and the six dimensional Kummer varieties, and therefore not deformation equivalent to them. As before, the work of Lehn and Sorger \cite{lehn&sorger} provides a symplectic resolution of $\M^\H_A(2v_0)$ by blowing-up the singular locus. Also, the results of Perego and Rapagnetta on deformation equivalence extend to this case. Again, for future notation convenience, we pick a smooth projective curve $C$ of genus $g=2$ and we take $A$ to be its Jacobian $A = \Jac(C)$. %Note that $C$ embeds naturally in $A$ as an (ample) theta-divisor. 
The genus formula fixes the self-intersection of our genus $2$ curve as $C^2 = 2$. Then, the primitive Mukai vector $(0,C,-1)$ squares to $2$. We denote $\M_{\six} := \M_A^{\H}(0,2C,-2)$ and $\N_{\six} := \N_A^{\H}(0,2C,-2)$, where we have chosen $\Ff_0$ to be a sheaf supported on the double curve $2C$. We say that the symplectic resolution $\widetilde{\N}_\six$ is the {\it O'Grady's six dimensional space} (even if $\widetilde{\N}_\six$ is only deformation equivalent to the one described in \cite{OGrady_2}). We summarize the previous discussion in the following theorem.

\begin{theorem}[\cite{OGrady_2, lehn&sorger, perego&rapagnetta}] \label{tm six-dimensional}
O'Grady's six dimensional space $\widetilde{\N}_{\six}$ is an irreducible holomorphic symplectic manifold with $b_2(\widetilde{\N}_{\six}) = 8$, and therefore it is not deformation equivalent to the Hilbert scheme of length $3$ on a K3 surface, nor to the six dimensional Kummer variety.

For any primitive Mukai vector satisfying $v_0^2 = 2$, there exists a symplectic resolution of $\N^\H_A(2v_0)$ and it is deformation equivalent to O'Grady's six dimensional space $\widetilde{\N}_{\six}$.
\end{theorem}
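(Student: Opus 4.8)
The statement merely repackages results of O'Grady \cite{OGrady_2}, Lehn--Sorger \cite{lehn&sorger} and Perego--Rapagnetta \cite{perego&rapagnetta}, so the plan is to assemble these inputs rather than argue from scratch. I would begin by recording that, since $C$ has genus $2$ and $A = \Jac(C)$, the genus formula forces $C^2 = 2$; hence $(0,C,-1)$ is a primitive Mukai vector with $(0,C,-1)^2 = 2$, and $(0,2C,-2) = 2\,(0,C,-1)$ satisfies the hypotheses $v = n v_0$ with $n = 2$, $v_0^2 = 2$. Theorem \ref{tm resolution for abelian surfaces} then applies verbatim: $\N_\six = \N_A^\H(0,2C,-2)$ has symplectic singularities, its singular locus is the reduced strictly polystable locus $\N_\six^\sps$, and the blow-up $\widetilde{\N}_\six$ of that locus is a symplectic resolution. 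The Mukai $2$-form restricts to a symplectic form on the stable locus which is unique up to scaling on the reduced moduli space (as recalled just before Theorem \ref{tm resolution for abelian surfaces}), and by the definition of a symplectic resolution it extends across the exceptional locus to a holomorphic symplectic form on $\widetilde{\N}_\six$. This already settles the existence clause of the second assertion.

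Next I would transport O'Grady's numerical computations along a deformation. For his original choice of primitive Mukai vector of square $2$ on an abelian surface, O'Grady \cite{OGrady_2} showed that the desingularized reduced moduli space is simply connected, that its space of holomorphic $2$-forms is one-dimensional, and that its second Betti number equals $8$ --- i.e.\ that it is an irreducible holomorphic symplectic manifold with $b_2 = 8$. By Lehn--Sorger \cite{lehn&sorger} this desingularization is realized as the single blow-up of the singular locus, exactly as our $\widetilde{\N}_\six$, and by Perego--Rapagnetta \cite{perego&rapagnetta} the symplectic resolutions of $\N_A^\H(2v_0)$ attached to all primitive $v_0$ with $v_0^2 = 2$ form a single deformation class. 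Since simple-connectedness, the Hodge numbers, and $b_2$ are deformation invariants of compact K\"ahler manifolds, $\widetilde{\N}_\six$ inherits all of them: it is an irreducible holomorphic symplectic manifold with $b_2(\widetilde{\N}_\six) = 8$, and the resolution of $\N_A^\H(2v_0)$ for any primitive $v_0$ of square $2$ is deformation equivalent to it.

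To conclude I would compare second Betti numbers: $b_2(\Hilb^3(X)) = 23$ for a K3 surface $X$ and $b_2(\Kum^3(A)) = 7$ for an abelian surface $A$, whereas $b_2(\widetilde{\N}_\six) = 8$; since $b_2$ is a deformation invariant and $8 \notin \{7,23\}$, $\widetilde{\N}_\six$ is deformation equivalent to neither of them.

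The part I expect to be the real obstacle is not any computation but a literature-alignment issue: one must check that O'Grady's original Kirwan-style desingularization in \cite{OGrady_2}, the Lehn--Sorger blow-up model in \cite{lehn&sorger}, and the $v_0$-independence of the deformation type in \cite{perego&rapagnetta} genuinely match up, so that $\widetilde{\N}_\six$ deserves the name ``O'Grady's six-dimensional space'' even though it is only deformation equivalent to the manifold of \cite{OGrady_2}. Beyond that, the argument is bookkeeping with deformation invariants, resting on the already-cited Theorem \ref{tm resolution for abelian surfaces}.
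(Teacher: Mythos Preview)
Your proposal is correct and matches the paper's approach: the paper does not give a proof of this theorem at all, but merely states it as a summary of the cited results of O'Grady, Lehn--Sorger, and Perego--Rapagnetta, with the surrounding discussion supplying exactly the pieces you list (the computation $(0,C,-1)^2=2$, the blow-up description of the resolution, O'Grady's $b_2=8$, and the deformation-equivalence and Betti-number comparison). Your write-up is a faithful, slightly more detailed, assembly of those same inputs.
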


\section{The canonical ruled surface}
\label{sc canonical ruled surface}

Fixing a smooth projective curve $C$ of genus $g$ we denote its canonical bundle by $K$. We shall consider its total space $\Tot(K)$ and its projective compactification, the {\it canonical ruled surface}, namely the ruled surface with topological invariant $2g -2$, 
\[
\KK := \PP(\Oo_C \oplus K), 
\]
The canonical ruled surface plays an important role in connection with Higgs bundles, so we present in this section some of its main properties.

The canonical ruled surface is naturally equipped with the projection $p : \KK \to C$. One can easily construct two sections of $p$, namely $\sigma_0 \cong C$ that denotes the $0$-section of $K$, and the infinity section $\sigma_\infty \cong C$, given by the $0$-section of $\Oo_C$. Obviously, one has
\[
\Tot(K) = \KK - \{ \sigma_\infty \}.
\]

It is classical result, based on the properties of ruled surfaces (see \cite[Sect. 2, Chap. V]{hartshorne} for instance), that $H^2(\KK,\ZZ)$ is generated  by the classes of $\sigma_0$ and the fibres $F = p^{-1}(x_0)$, for some $x_0 \in C$, with 
\[
[\sigma_\infty]^2 = 2g-2
\]
and 
\[
[F]^2 = 0.
\]
Furthermore, the class of the infinity section is
\[
[\sigma_\infty] = [\sigma_0] - (2g-2)[F], 
\]
so it has negative self intersection
\[
[\sigma_\infty]^2 = -(2g-2),
\]
and,  
\[
[\sigma_0] \cdot [\sigma_\infty] = 0.
\]

One can now derive the following technical result that will be used in Section \ref{sc Higgs moduli spaces}.

\begin{lemma}\label{lm det Sigma = nC}
Let $\Sigma$ be a projective curve inside $\Tot(K) \subset \KK$, then
\[
\Oo_\KK(\Sigma) \cong \Oo_\KK(n\sigma_0)
\]
for some $n \geq 1$. In particular,
\begin{equation} \label{eq identification of support bases}
\left . \Hilb([n\sigma_0]) \right|_{\supp \cap \sigma_\infty  = \emptyset} \cong |n\sigma_0|_{\supp \cap \sigma_\infty  = \emptyset}.
\end{equation}
\end{lemma}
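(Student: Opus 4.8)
The plan is to work in the Picard group of $\KK$, which by the quoted description of $H^2(\KK,\ZZ)$ is generated (as far as numerical classes are concerned) by $[\sigma_0]$ and $[F]$. Since $\Sigma$ is a projective curve contained in the open surface $\Tot(K)=\KK\setminus\sigma_\infty$, it is disjoint from the infinity section, so $[\Sigma]\cdot[\sigma_\infty]=0$. First I would write $[\Sigma]=a[\sigma_0]+b[F]$ for integers $a,b$, with $a\geq 1$ since $\Sigma$ must meet a general fibre (it is a curve in $\Tot(K)$, which dominates $C$ under $p$, unless $\Sigma$ is a fibre, in which case $a=0,b=1$ — but a full fibre $F$ meets $\sigma_\infty$, so this is excluded and $a\geq 1$). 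Then I impose $0=[\Sigma]\cdot[\sigma_\infty]$. Using $[\sigma_\infty]=[\sigma_0]-(2g-2)[F]$ together with $[\sigma_0]^2=2g-2$, $[F]^2=0$, $[\sigma_0]\cdot[F]=1$, one computes $[\sigma_0]\cdot[\sigma_\infty]=0$ and $[F]\cdot[\sigma_\infty]=1$, hence $[\Sigma]\cdot[\sigma_\infty]=a\cdot 0+b\cdot 1=b$. Therefore $b=0$ and $[\Sigma]=n[\sigma_0]$ with $n=a\geq 1$.

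Next I would upgrade this numerical equality to an isomorphism of line bundles $\Oo_\KK(\Sigma)\cong\Oo_\KK(n\sigma_0)$. On a ruled surface $\PP(E)\to C$ the Picard group sits in a split exact sequence $0\to\Pic(C)\xrightarrow{p^*}\Pic(\KK)\to\ZZ\to 0$, the splitting sending $1$ to $[\sigma_0]$; concretely every line bundle on $\KK$ is of the form $\Oo_\KK(k\sigma_0)\otimes p^*L$ for a unique $k\in\ZZ$ and $L\in\Pic(C)$. Applying this with $k=n$, it remains to identify the curve class $L$ for $\Oo_\KK(\Sigma)$. Restricting $\Oo_\KK(\Sigma-n\sigma_0)=p^*L$ to the section $\sigma_\infty\cong C$ gives $L\cong\Oo_\KK(\Sigma-n\sigma_0)|_{\sigma_\infty}=\Oo_C(-n(\sigma_0\cap\sigma_\infty))$ since $\Sigma\cap\sigma_\infty=\emptyset$; and $\sigma_0\cap\sigma_\infty=\emptyset$ because $[\sigma_0]\cdot[\sigma_\infty]=0$ and both are sections (so the intersection number being zero forces the intersection to be empty, as any point of intersection would be transverse or worse, contributing positively). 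Hence $L\cong\Oo_C$ and $\Oo_\KK(\Sigma)\cong\Oo_\KK(n\sigma_0)$.

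For the final assertion \eqref{eq identification of support bases}: the Hilbert scheme $\Hilb([n\sigma_0])$ parametrizes effective divisors (dimension-$1$ subschemes) of $\KK$ with class $n[\sigma_0]$, and the linear system $|n\sigma_0|$ parametrizes effective divisors linearly equivalent to $n\sigma_0$; the natural map $\Hilb([n\sigma_0])\to\Pic(\KK)$ lands in the coset $n[\sigma_0]+p^*\Pic(C)$, and $|n\sigma_0|$ is the fibre over $\Oo_\KK(n\sigma_0)$. The first part of the lemma shows precisely that a subscheme $\Sigma$ whose support is disjoint from $\sigma_\infty$ maps to $\Oo_\KK(n\sigma_0)$, i.e. lies in $|n\sigma_0|$; conversely a member of $|n\sigma_0|$ whose support avoids $\sigma_\infty$ is such a subscheme. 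So restricting both sides to the open locus $\{\supp\cap\sigma_\infty=\emptyset\}$ gives the same subset, and since both are locally closed subschemes of $\Hilb(\KK)$ cut out by the same open condition, the identification is an isomorphism of schemes.

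The main obstacle I anticipate is the passage from the numerical/topological statement to the honest isomorphism of line bundles, i.e.\ controlling the $p^*\Pic(C)$-ambiguity; the restriction-to-$\sigma_\infty$ trick handles it cleanly because $\Sigma$ and $\sigma_0$ are both disjoint from $\sigma_\infty$, but one must be a little careful that $[\sigma_0]\cdot[\sigma_\infty]=0$ genuinely implies $\sigma_0\cap\sigma_\infty=\emptyset$ (true here since both are sections of $p$ and an intersection point would force a strictly positive contribution to the intersection number on the smooth surface $\KK$). A minor secondary point is ruling out $\Sigma$ vertical or with vertical components meeting $\sigma_\infty$, which is immediate from $\Sigma\subset\Tot(K)$.
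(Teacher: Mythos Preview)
Your proof is correct, and the first half (the numerical computation showing $[\Sigma]=n[\sigma_0]$) matches the paper's argument exactly. The key difference is in how you upgrade the numerical equality to an honest isomorphism of line bundles. The paper uses the $\CC^*$-action on $\KK$ by fibrewise scaling: it degenerates $\Sigma$ to the multiple curve $n\sigma_0$ as $\tau\to 0$, builds the resulting family $\Ll$ of line bundles over $\CC$, observes that $\Ll|_{\tau\neq 0}\cong L:=\Oo_\KK(\Sigma)\otimes\Oo_\KK(n\sigma_0)^{-1}$ while $\Ll|_{\tau=0}\cong\Oo_\KK$, and concludes $L\cong\Oo_\KK$ by separatedness of the Picard scheme. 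You instead exploit the explicit splitting $\Pic(\KK)\cong\ZZ\cdot[\sigma_0]\oplus p^*\Pic(C)$ for a ruled surface and kill the $p^*\Pic(C)$-ambiguity by restricting to the section $\sigma_\infty$, which is disjoint from both $\Sigma$ and $\sigma_0$. Your route is more elementary and self-contained; the paper's degeneration argument is more geometric and thematically consonant with the normal-cone constructions later in the article, but it implicitly relies on the fact that a family of line bundles over $\CC$ which is constant on $\CC^*$ and trivial at $0$ is globally trivial. Your handling of the second assertion is essentially the same as the paper's, which simply says it follows immediately from the first part.
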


\begin{proof}
First we compute the class $[\Sigma]$ in $H^2(\KK, \ZZ)$. Since the later is generated by $[\sigma_0]$ and $[F]$, one has
\[
[\Sigma] = \alpha [\sigma_0] + \beta [F]
\]
for some $n, m \in \ZZ$. By hypothesis, $\Sigma$ is contained in $\Tot(K)$, what implies that
\[
[\Sigma] \cdot [\sigma_\infty] = 0.
\]
Hence, 
\[
0 = \left ( n [\sigma_0] + m [F]\right ) \cdot \left ( [\sigma_0] - (2g-2) [F]\right ) =  \beta,
\]
so
\[
[\Sigma] = [n\sigma_0].
\]

Set $L := \Oo_\KK(\Sigma) \otimes \Oo_\KK(nC)^{-1}$. We shall next see that $L$ is indeed the trivial bundle. Note that one has a $\CC^*$ action on $\KK$ given by scalling the fibres by the multiplication of $\tau \in \CC^*$. We see that under this action $\Sigma$ tends, when $\tau \to 0$, to a curve supported on $\sigma_0$, which must be the multiple curve $n\sigma_0$ as $[\Sigma] = [nC]$. Hence, one can construct a family $\Xx$ of curves inside $\KK$ parametrized by $\CC$, whose fibre over $\tau \in \CC^*$ is $\tau(\Sigma)$ and its fibre over $t=0$ is the multiple curve $nC$. Hence, 
\[
\Ll = \Oo_{\KK \times \CC}\left (\Xx - (n\CC \times \CC)\right)
\]
provides a family of line bundles, satisfying
\[
\Ll|_{\tau \neq 0} \cong \Oo_\KK(\tau(\Sigma)) \otimes \Oo_\KK(nC)^{-1} \cong (\tau^{-1})^*\Oo_\KK(\Sigma) \otimes \Oo_\KK(nC)^{-1} \cong \Oo_\KK(\Sigma) \otimes \Oo_\KK(nC)^{-1} \cong p^*L
\]
and
\[
\Ll|_{\tau = 0} \cong \Oo_\KK(nC) \otimes \Oo_\KK(nC)^{-1} \cong \Oo_\KK.
\]  
It then follows that $L \cong \Oo_\KK$, what concludes the proof of the first statement. The second statement follows inmediately from the first.  
\end{proof}

We will now study the open subset of the linear system $|n\sigma_0|_{\supp \cap \sigma_\infty  = \emptyset}$. This open subset of the linear system $|n\sigma_0|$ contains curves that can be obtained by linear deformations of the multiple curve $n\sigma_0$. Linear deformations of a curve are classified by the sections of the normal bundle restricted to the curve. Hence, one has the identification
\begin{equation} \label{eq identification of linearsystem with deformations}
|n\sigma_0|_{\supp \cap \sigma_\infty = \emptyset} = H^0(n\sigma_0, \Oo_{\KK}(n\sigma_0)|_{n\sigma_0}).
\end{equation}

Let $r$ denote the obvious projection of the multiple curve onto its reduced support and observe that $r_*\Oo_{\KK}(n\sigma_0)|_{n\sigma_0}$ amounts to $\Oo_{\KK}(n\sigma_0)|_{\sigma_0} \otimes r_*\Oo_{n\sigma_0}$ by the projection formula. The structural sheaf $\Oo_{n\sigma_0} \cong \Oo_\Sigma/\Ii_C^n$ decomposes, as an $\Oo_C$-module, into $\Oo_C \oplus (\Ii_C/\Ii_C^2) \oplus \dots \oplus (\Ii_C^{n-1}/\Ii_C^{n})$. The intersection of $\sigma_0$ with the canonical divisor is zero, so $\Ii_C/\Ii_C^2 \cong \Oo_{\KK}(-\sigma_0)|_{\sigma_0} \cong K^{-1}$. Therefore,
\[
H^0(n\sigma_0, \Oo_{\KK}(n\sigma_0)|_{n\sigma_0}) \cong \bigoplus_{i = 1}^n H^0(C,K^{\otimes i}),
\]
so the open subset of the linear system $|n\sigma_0|_{\supp \cap \sigma_\infty  = \emptyset}$ can be identified with
\begin{equation} \label{eq identification of Hitchin bases}
|n\sigma_0|_{\supp \cap \sigma_\infty  = \emptyset} \cong \bigoplus_{i = 1}^n H^0(C,K^{\otimes i}).
\end{equation}

We finish this section with the study of the Poisson structure of $\KK$. It is well known (see \cite[Sect. 2, Chap. V]{hartshorne}) that its canonical divisor is
\[
K_\KK = \Oo_\KK(-2 \sigma_\infty).
\]
It follows that $\KK$ is a Poisson surface, with $\Theta_\KK$ being a section of $\bigwedge^2 \Tt_\KK \cong K_\KK^{-1} \cong \Oo_\KK(2 \sigma_\infty)$ vanishing with order $2$ in $\sigma_\infty$. Furthermore, $\Tot(K) = \KK \setminus \{ \sigma_\infty \}$ is equipped with a holomorphic symplectic form $\Omega_0$ as it is a cotangent bundle, and the later coincides with the one induced by the Poisson structure.

\section{Moduli spaces of Higgs bundles and their symplectic resolutions}
\label{sc Higgs moduli spaces}

Let $C$ be a smooth projective curve. A {\it Higgs bundle} is a pair $(E, \varphi)$, where $E$ is a holomorphic vector bundle on $C$, and $\varphi \in H^0(C, \End(E) \otimes K)$ is a holomorphic section of the endomorphisms bundle, twisted by the canonical bundle $K$. It is possible to construct \cite{hitchin-self, simpson1, simpson2, nitsure} the moduli space of rank $n$ and degree $d$ semistable (resp. stable) Higgs bundles on $C$ which we denote by $\Mm_C(n,d)$ (resp. $\Mm_C(n,d)^{\st}$). This is a quasi-projective variety of dimension $2n^2(g-1) + 2$. It is also connected, normal and irreducible after Simpson \cite{simpson2}. The moduli space of stable Higgs bundles $\Mm_X(n,d)^{\st}$ is a smooth open subset of $\Mm_X(n,d)$.

Taking a basis $(q_1, \dots, q_n)$ of the $\GL(n,\CC)$-invariant polynomials with $\deg(q_i) = i$, one can construct the {\it Hitchin fibration},
\begin{equation} \label{eq Hitchin system}
\map{\Mm_C(n,d)}{B_n := \bigoplus_{i = 1}^n H^0(C,K^{\otimes i})}{(E,\varphi)}{\left ( q_1(\varphi), \dots, q_n(\varphi) \right ).}{}
\end{equation}
which is a surjective proper morphism \cite{hitchin_duke} endowing $\Mm_C(n,d)$ with the structure of an algebraically completely integrable system, called the {\it $\GL_n$-Hitchin system}. In particular, the generic fibers of \eqref{eq Hitchin system} as abelian varieties that admit a beautiful description \cite{hitchin_duke, BNR, simpson2, schaub} in terms of (compactified) Jacobians of certain curves, called {\it spectral curves} in the literature, covering our base curve $C$. To describe these, consider the total space $\Tot(K)$ of the canonical bundle and the surjection $p: \Tot(K) \to C$ obtained by the structure morphism of the line bundle $K$ over $C$. Observe that the pullback bundle $p^*K \to \Tot(K)$ admits a tautological section $\lambda$ sending every point $k \in \Tot(K)$ to the point $k$ on the fibre of $p^*K$ over $p(k)$. For every $b = (b_1, \dots, b_n) \in B_n$, with $b_i \in H^0(C,K^i)$, we construct the {\it spectral curve} $C_b \subset \Tot(K)$ by considering the vanishing locus of the section of $\pi^*K^n$
\[%begin{equation} \label{eq equation spectral curve}
\lambda^n + \pi^*b_1 \lambda^{n-1} + \dots + \pi^*b_{n-1} \lambda + \pi^*b_n.
\]%end{equation}
The restriction of $p$ to $C_b$ is a ramified degree $n$ cover over $C$ that we denote by
\[
p_b : C_b \longrightarrow C.
\]
The genus formula and the triviality of the canonical bundle of the quasiprojective surface $\Tot(K)$, allow us to compute the genus $g(C_b) = n^2(g-1) + 1$. 

Let us now consider a torsion-free rank one sheaf $\Ff$ over $C_b$ of Euler characteristic $d - n(g-1)$ and whose restriction to every irreducible component of $C_b$ has rank $1$. One has that the push-forward $E_\Ff := \pi_{b,*}\Ff$ is a vector bundle on $C$ of rank $n$ and degree $d$. Furthermore, pushing-forward the morphism associated to tensorization by the tautological section 
\[
\lambda : \Ff \to \Ff \otimes \pi^*K,
\]
one obtains, thanks to the projection formula, a Higgs field for our vector bundle $E_\Ff = p_{b,*}\Ff$, 
\begin{equation} \label{eq Higgs field}
\varphi_\Ff:= p_{b,*}\lambda : E_\Ff \longrightarrow E_\Ff \otimes K.
\end{equation}
One can naturally check that the image of $(E_\Ff, \varphi_\Ff)$ under \eqref{eq Hitchin system} is $b$. The previous construction is indeed a $1$-$1$ correspondence, called the {\it spectral correspondence}, between rank $1$ torsion free sheaves on $C_b$ and Higgs bundles whose image under the Hitchin morphism is $b \in B_n$ \cite{BNR}.

Furthermore, the (semi)stability of the rank $1$ torsion-free sheaves $\Ff$ on  $C_b$ is equivalent to the (semi)stability of the corresponding Higgs bundle $(E_\Ff, \varphi_\Ff)$. Therefore, the fibres of \eqref{eq Hitchin system} are identified with the compactified Jacobians of the corresponding spectral curve, $\overline{\Jac}^{\, \delta + d}(C_b)$, classifying torsion-free sheaves on $C_b$ with rank $1$ on each irreducible component of $C_b$.

Note that all the spectral curves $C_b$ belong to the linear system $|n\sigma_0|$ in $\Tot(K)$.
Over the ruled surface $\KK$ we consider the moduli space of sheaves $\M_\KK^\H(0,n\sigma_0, d-n(g-1))$ and inside it, take the open locus $\left . \M^{\H_0}_{\KK}(0,n\sigma_0, d-n(g-1)) \right|_{\supp \cap \sigma_\infty = \emptyset}$ of those sheaves that do not intersect the infinity section $\sigma_\infty$. After Lemma \ref{lm det Sigma = nC} and \eqref{eq identification of Hitchin bases}, the support morphism \eqref{eq LePoitier} restricts to 
\[
\left . \M^{\H_0}_{\KK}(0,n\sigma_0, d-n(g-1)) \right|_{\supp \cap \sigma_\infty = \emptyset} \longrightarrow B_n \cong |n\sigma_0|_{\supp \cap \sigma_\infty = \emptyset}.
\]

In this context, the spectral correspondence leads to a second construction of the Higgs moduli space as the closed and dense subset of $\left . \M^{\H_0}_{\KK}(0,n\sigma_0, d-n(g-1)) \right|_{\supp \cap \sigma_\infty = \emptyset}$ given by those sheaves whose restriction to its associated spectral curve has rank $1$ on each irreducible component,
\begin{equation} \label{eq compactification of Higgs space}
\Mm_C(n,d) = \left \lbrace \begin{matrix} \Ff \in \M^{\H_0}_{\KK}(0,n\sigma_0, d-n(g-1)) \textnormal{ s. t. } \\ \supp(\Ff) \subset \Tot(K) = \KK \setminus \sigma_\infty \textnormal{ and } \\ \rk(\Ff|_{Y_i}) = 1 \textnormal{ for every irreducible} \\ \textnormal{component } Y_i \textnormal{ of } \supp(\Ff) \end{matrix} \right \rbrace.
\end{equation}
Following \eqref{eq identification of Hitchin bases}, under this identification, the Hitchin fibration can be interpreted as the support fibration \eqref{eq LePoitier},
\begin{equation} \label{eq Hitchin system 2}
\Mm_C(n,d) \longrightarrow |n\sigma_0|_{\supp \cap \sigma_\infty = \emptyset}.
\end{equation}

Fixing a point $x_0 \in C$, one can define the natural morphism
\[
\morph{\Mm_C(n,d)}{\Jac^0(C)\times H^0(C,K)}{(E,\varphi)}{\left( \det(E) \otimes \Oo_C(d x_0)^{-1}, \tr(\varphi) \right).}{}{a_{x_0}}
\]
The kernel of this map is the moduli space of $\SL_n(n,\cC)$--Higgs bundles,
\[
\Nn_C(n,d) := a_{x_0}^{-1}(\Oo_C, 0).
\]
The locus of stable points $\Nn_C(n,d)$ is a smooth open subset of $\Mm_C(n,d)$. As before, $\Nn_C(n,d) = \Nn_C(n,d)^{\st}$ is smooth if and only if $n$ and $d$ are coprime. The Hitchin morphism \eqref{eq Hitchin system} restricts to 
\begin{equation} \label{eq reduced Hitchin system}
\map{\Nn_C(n,d)}{V_n := \bigoplus_{i = 2}^n H^0(C,K^{\otimes i})}{(E,\varphi)}{\left ( q_2(\varphi), \dots, q_n(\varphi) \right ),}{}
\end{equation}
becoming an algebraically completely integrable system \cite{hitchin_duke} that we call the {\it $\SL_n$-Hitchin system}.

One can define a holomorphic symplectic form on $\Mm_C(n,d)$ out of the construction of this moduli space as an infinite hyperK\"ahler quotient. This holomorphic symplectic form coincides (up to scalling) with that obtained out of the Bottacin--Markman Poisson structure on the moduli space \cite{bottacin_2, markman}. Also, the holomorphic symplectic form is the extension of the standard holomorphic symplectic form defined on the cotangent of the moduli space of stable vector bundles, which is naturally included in $\Mm_C(n,d)$.

Using deformation theory \cite{hitchin-self, simpson2, nitsure}, one can compute the tangent space of the moduli spaces $\Mm_C(n,d)$ and $\Nn_C(n,d)$. Since stable implies simple also in this case, the stable loci $\Mm_C(n,d)^\st$ and $\Nn_C(n,d)^\st$ immediately smooth. A less straigh-forward analysis based on DGLA was used by Simpson \cite{simpson2} to show that the singular locus of $\Mm_C(n,d)$ and $\Nn_C(n,d)$ are indeed their strictly polystable loci $\Mm_C(n,d)^\sps$ and $\Nn_C(n,d)^\sps$.

The study of the existence of symplectic resolutions of the Higgs moduli spaces was first carried out by Kiem and Yoo \cite{kiem&yoo} for $\Nn_C(2,0)$ and different values of $g$, the genus of the curve. The method used here consists on the computation of the stringy E-function, which is a polynomial if and only if there exists a crepant resolution (recall that symplectic resolution implies crepant). Tirelli provided in \cite{tirelli} a complete description of the occurrence of symplectic resolutions of the Higgs moduli space for any rank and any genus. The results of \cite{tirelli} rely on Simpson's Isosingularity Theorem \cite{simpson2} and the work of Bellamy and Schedler \cite{bellamy&schedler} on symplectic resolutions of character varieties associated to Riemann surfaces, generalizing the approach of Kaledin--Lehn--Sorger \cite{lehn&sorger, kaledin&lehn&sorger} to the context of quiver and character varieties. 

\begin{theorem}[\cite{kiem&yoo, tirelli, bellamy&schedler}] \label{tm desingularization} 
Let $g$ denote the genus of the smooth projective curve $C$ and assume $g \geq 2$. Then, $\Mm_C(n,0)$ and $\Nn_C(n,0)$ are varieties with a symplectic singularities and  
\begin{enumerate}
\item when $(n,g) = (2,2)$, there exist symplectic resolutions $\widetilde{\Mm}_C(2,0)$ and $\widetilde{\Nn}_C(2,0)$ of, respectively, $\Mm_C(2,0)$ and $\Nn_C(2,0)$ obtained after blowing-up the respective singular loci which amounts to their strictly polystable loci $\Mm_C(2,0)^\sps$ and $\Nn_C(2,0)^\sps$;
\item when $(n,g)\neq (2,2)$, there does not exist a symplectic resolution of $\Mm_C(n,0)$ nor $\Nn_C(n,0)$.
\end{enumerate}
\end{theorem}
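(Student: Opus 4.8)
The plan, following Tirelli \cite{tirelli}, is to transfer everything to the corresponding statements about character varieties via Simpson's Isosingularity Theorem \cite{simpson2}, using the classification of Bellamy and Schedler \cite{bellamy&schedler} for the dichotomy and Kiem--Yoo \cite{kiem&yoo} as an alternative input in rank $2$. First I would check the symplectic singularities. The space $\Mm_C(n,0)$ is normal and irreducible by Simpson \cite{simpson2} and carries a holomorphic symplectic form on $\Mm_C(n,0)^{\st}$, so by definition \cite{beauville_sing} it has symplectic singularities once one knows that the pullback of this form to a resolution extends holomorphically, which is a question about the analytic germ at a polystable point $(E,\varphi)\cong\bigoplus_i V_i\otimes(E_i,\varphi_i)$. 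By deformation theory and the Luna slice theorem such a germ is analytically isomorphic to the germ at the origin of the symplectic reduction $\mu^{-1}(0)/\!\!/\prod_i\GL(V_i)$, where this group acts with quadratic moment map $\mu$ on the hypercohomology $\HH^1$ of the deformation complex $\End(E)\xrightarrow{[\varphi,-]}\End(E)\otimes K$, which is a symplectic vector space by Serre duality. Symplectic reductions of this shape have symplectic singularities by the theory of symplectic reductions (see \cite{bellamy&schedler} and the references therein), so $\Mm_C(n,0)$ does; the same then holds for $\Nn_C(n,0)=a_{x_0}^{-1}(\Oo_C,0)$, which is again normal and inherits the form.

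\textbf{Existence when $(n,g)=(2,2)$.} By the Isosingularity Theorem \cite{simpson2} there is, near each point, a local analytic isomorphism between $\Mm_C(2,0)$ (resp.\ $\Nn_C(2,0)$) and the $\GL_2$ (resp.\ $\SL_2$) character variety of the genus $2$ surface group, carrying singular locus to singular locus; by Simpson's DGLA analysis \cite{simpson2} these singular loci are exactly the strictly polystable loci $\Mm_C(2,0)^{\sps}$ and $\Nn_C(2,0)^{\sps}$. Bellamy and Schedler \cite{bellamy&schedler} proved that these character varieties admit a symplectic resolution, realized as a single blow-up of the reduced singular locus. Now the blow-up along the reduced singular locus commutes with analytic localization, and, for a variety with symplectic singularities, a projective resolution is symplectic if and only if it is crepant --- a condition on the analytic germ; hence the blow-ups $\widetilde{\Mm}_C(2,0)\to\Mm_C(2,0)$ and $\widetilde{\Nn}_C(2,0)\to\Nn_C(2,0)$ along the respective strictly polystable loci are symplectic resolutions. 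One can also see this directly: the germ of $\Mm_C(2,0)$ (resp.\ $\Nn_C(2,0)$) along its deepest stratum, the image of the ``doubling'' map from $\Mm_C(1,0)=\Jac^0(C)\times H^0(C,K)$, is precisely the ten- (resp.\ six-) dimensional O'Grady--Lehn--Sorger model with $v_0^2=2g-2=2$, resolved by a single blow-up of its reduced singular locus in Theorem~\ref{tm resolution for surfaces} (resp.\ Theorem~\ref{tm resolution for abelian surfaces}), while the shallower strata contribute milder product singularities, so a single global blow-up resolves everything.

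\textbf{Non-existence when $(n,g)\neq(2,2)$.} If $\Mm_C(n,0)$ admitted a symplectic resolution, then restricting it over a small analytic neighbourhood of a point of the deepest stratum and invoking the Isosingularity Theorem once more would produce a crepant, hence symplectic, resolution of the corresponding local model of the $\GL_n$ character variety of the genus $g$ surface group. Bellamy and Schedler \cite{bellamy&schedler} classified exactly which of these local models admit a symplectic resolution --- through the theory of $\mathbb{Q}$-factorial terminalizations of conical symplectic singularities applied to the relevant quiver-type local pieces --- and showed that whenever $(n,g)\neq(2,2)$ some stratum has a local model admitting none; this contradiction proves the claim, and the argument for $\Nn_C(n,0)$ is identical with the $\SL_n$ character variety in place of the $\GL_n$ one. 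For $n=2$ and any $g\geq3$ the non-existence also follows, independently, from the computation of Kiem and Yoo \cite{kiem&yoo}: the stringy $E$-function of $\Nn_C(2,0)$ is not a polynomial, which already precludes a crepant --- hence a symplectic --- resolution.

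\textbf{Main obstacle.} The heart of the proof is the non-existence step: the dichotomy across the whole range of $(n,g)$ depends entirely on Bellamy--Schedler's classification of symplectic resolutions of surface-group character varieties, itself a substantial application of Namikawa's deformation theory of symplectic singularities to an intricate family of local models. Simpson's Isosingularity Theorem is the device that makes this Betti-side input applicable to the Dolbeault moduli spaces; the routine but essential point to verify is that the singular locus, the blow-up along it, crepancy, and (in the positive case) non-degeneracy of the limiting $2$-form transfer faithfully under the local analytic isomorphism, so that the single blow-up of the reduced singular locus is recognized as a symplectic resolution exactly when the corresponding Betti-side blow-up is.
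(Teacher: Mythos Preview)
The paper does not give its own proof of this theorem: it is stated as a cited result, preceded only by a short discussion explaining that Tirelli obtains it by combining Simpson's Isosingularity Theorem with Bellamy--Schedler's classification of symplectic resolutions of surface-group character varieties, while Kiem--Yoo handle the rank-$2$ case via the stringy $E$-function. Your proposal is a faithful and correct expansion of precisely this strategy, so it matches the paper's indicated approach.
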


%%%%%%%%%%%%%%%%%%%%%%%%%%%%%%%%%%

\section{The non-linear deformation of Donagi, Ein and Lazarsfeld}
\label{sc degeneration of ten dimensional}

In this section we review the non-linear deformation of Donagi, Ein and Lazarsfeld \cite{donagi&ein&lazarsfeld} for the case of K3 surfaces, later revisited by de Cataldo, Maulik and Shen in \cite{deCataldo&maulik&shen_2} in the case of abelian surfaces (see the Appendix of \cite{felissetti&mauri} as well). This degeneration was generalized by the author  to the case of arbitrary smooth surfaces in \cite{franco}, where, furthermore, it was described the behaviour of the holomorphic symlectic form.

Take a smooth symplectic surface $S$ containing a genus $g > 1$ smooth curve 
\[
C \subset S.
\]
Thanks to the triviality of the canonical bundle $K_S$, the adjunction formula implies that the normal bundle of $C$ in $S$ coincides with the canonical bundle of $C$, 
\[
N_{S/C} \cong K.
\]
Consider the projective compactification $\KK$. Recalling the properties of the canonical ruled surface from Section \ref{sc canonical ruled surface}, one has $[\sigma_0]^2 = 2g-2 = [C]^2$ and $[\sigma_0] \cdot [K_\KK] = 0 = [C] \cdot [K_S]$. Hence, the linear systems $|nC|$ and $|n\sigma_0|$ contain generic curves of equal genus,
\begin{equation} \label{eq equal genus}
g_{nC} = g_{n\sigma_0},
\end{equation}
and the topological invariants of the ideal sheaves describing these curves coincide.

Take the trivial family $S \times \PP^1$ and consider the blow-up at $C \times \{ 0 \}$,
\begin{equation} \label{eq definition of Ss}
\ol{\sS} := \Blow_{C \times \{ 0 \} }\left (S \times \PP^1 \right ),
\end{equation}
which is non-singular as we are blowing-up a non-singular subvariety. Composing the structural morphism of the blow-up with the projection $S \times \PP^1 \to \PP^1$, one gets the structural morphism 
\[
\pi : \ol{\sS} \to \PP^1. 
\]
This map is flat and its fibres outside $0 \in \PP^1$ are not affected by our construction, so for the generic fibre, at $t \neq 0$, we have
\[
\ol{\sS}_t \cong S.
\]
Meanwhile, the central fibre, at $t = 0$, is the union
\begin{equation} \label{eq definition of Ss_0}
\ol{\sS}_0 = \KK \cup S,
\end{equation}
where both irreducible components meet transversely at the image of infinity section $\sigma_\infty \subset \LL$ identified with the curve $C \subset S$. Note that $\ol{\sS}_0$ is a complete intersection subvariety. %Although it is not normal, is its singular locus has codimension $1$. 
One also consider the open subset
\begin{equation} \label{eq definition of Ss'}
\sS := \ol{\sS} - ( S \times  \{ 0 \}),
\end{equation}
and observe that the restriction there of the structural morphism gives, again, a flat family of surfaces over $\PP^1$, but now the central fibre $\sS|_{t = 0} = \Tot(K)$ is not projective.

Denote by $p_S$ the composition of the structural morphism of the blow-up with the projection of $S \times \PP^1$ onto $S$. Being $\ol{\sS}$ projective, one may choose a relative polarization
\begin{equation} \label{eq definition of Hh}
\hH = p_S^* \H \otimes \Oo_{\ol{\sS}}(\KK). 
\end{equation}
For all $t \neq 0$ in $\PP^1$, one trivially has that $\hH|_{\ol{\sS}_t} = \H$ while $H_0 := \hH|_{\KK}$ corresponds to $p^*(\H|_C) \otimes \Oo_{\KK}(\sigma_{\infty})$.%, where $\Oo_C(m)$, with $m = C \cdot \H$, denotes the ample divisor on $\sigma_\infty \cong C$ given by the restriction of $\H$ to $C$.

Since $C \times \{ 0 \}$ is a Cartier divisor of $C \times \PP^1$, one has that 
\begin{equation} \label{eq definition of Cc}
\cC := \Blow_{C \times \{ 0 \}}\left ( C \times \PP^1 \right ) \cong C \times \PP^1
\end{equation}
embeds naturally into $\ol{\sS}$. One can consider the fibration $\cC \to \PP^1$, whose generic fibres, at $t\neq 0$, are naturally identified with $C$,
\[
\cC_t \cong C \subset S = \ol{\sS}_t.
\]
Furthermore, the restriction of $\cC$ to $t = 0$ coincides with the image of the zero section,
\[
\cC_0 \cong \sigma_0 \cong C, 
\]
inside $\LL \subset \ol{\sS}_0$. Note that the restriction to every point of $\PP^1$ gives $\cC_t \in H^2(\ol{\sS}_t, \ZZ)$. Using $\cC$, consider the Mukai vector
\begin{equation} \label{eq definition of v}
\vV = \left ( 0, [n\cC], d - n(g-1)   \right ),
\end{equation}
for some integer $d \in \ZZ$.
%restriction to $\v_a$ over the generic fibres $S$, on the which restricts to $v_a$ over the irreducible component $\LL$ of the central fibre $\ol{\sS}_0$, and to $\v_a$ on the generic fibres $\ol{\sS}_t = \sS_t$, at $t \neq 0$.

Following Simpson \cite[Theorem 1.21]{simpson1} (see \cite[Theorem 4.3.7]{huybrechts&lehn} for a specific treatment of the relative case), let us consider the moduli space $\M_{\ol{\sS}/\PP^1}^\hH(\vV)$
%\[
%\ol{\mM}_{S}(\vV_a,C) := \M_{\ol{\sS}/\PP^1, \hH}(\vV_a).
%\]
of relatively pure dimension $1$ $\hH$-semistable relative sheaves over the $\PP^1$ scheme $\ol{\sS}$ having Mukai vector $\vV$. By construction, $\M_{\ol{\sS}/\PP^1}^\hH(\vV)$ comes equipped with the structural flat morphism to $\PP^1$, which trivializes over $\PP^1 - \{ 0 \}$ having generic fibres equal to $\M_{S,\H}(\v)$, and whose central fibre over $t = 0$ is $\M^{\hH_0}_{\KK \cup S}(\v)$. 

Let us consider the relative Hilbert scheme $\Hilb_{\ol\sS/\PP^1}([n\cC])$ parametrizing ideal sheaves of curves in $\ol{\sS}$ with first Chern class $[n\cC]$ and let us denote by $\Hilb_{\sS/\PP^1}([n\cC])$ the open subset given by those curves contained in $\sS$. The determinant in cohomology is well defined and one can consider 
\begin{equation} \label{eq det_Ss}
\det_{\sS} : \Hilb_{\ol\sS/\PP^1}([n\cC]) \to \Jac_{\PP^1}(\ol{\sS}),
\end{equation}
that restricts, of course, to $\Hilb_{\sS/\PP^1}([n\cC])$. For $t \neq 0$, the fibre of \eqref{eq det_Ss} over $L \in \Jac(S)$ corresponds to the linear system $|L|$ in $S$. After Lemma \ref{lm det Sigma = nC}, one has that the whole slice of the Hilbert scheme is sent to a point,
\begin{equation} \label{eq the whole slice of Hilb is sent to a point}
\det_{\sS} \left ( \Hilb_{\sS/\PP^1}([n\cC])|_{t = 0} \right ) = \Oo_{\ol{\sS}_0}(n \sigma_0).
\end{equation}
Furthermore, making use of \eqref{eq identification of Hitchin bases}, one can identify the whole central fibre of the Hilbert scheme with the Hitchin base
\[
\Hilb_{\sS/\PP^1}([n\cC])|_{t = 0} \cong \bigoplus_{i = 1}^n H^0(C, K^{\otimes i}) = B_n. 
\]

By specifying the relative (fitting) support of the sheaves in $\M_{\ol{\sS}/\PP^1}^\hH(\vV)$, one has the following surjective morphism
\begin{equation} \label{eq relative Beauville fibration}
\morph{\M_{\ol{\sS}/\PP^1}^\hH(\vV)}{\Hilb_{\ol{\sS}/\PP^1}([n\cC])}{\Ff}{\supp(\Ff).}{}{\mathbf{h}}
\end{equation}
The morphism $\mathbf{h}$ commutes with the structural morphisms (to $\PP^1$). Let us consider the restriction to those sheaves supported in $\sS \subset \ol\sS$,
\[
\mathbf{h}^{-1}\left ( \Hilb_{\sS/\PP^1}([n\cC]) \right ) \longrightarrow \Hilb_{\sS/\PP^1}([n\cC]).
\]
Observe that $\mathbf{h}^{-1}\left ( \Hilb_{\sS/\PP^1}([n\cC]) \right ) \longrightarrow \PP^1$ has generic fibres $\M_{S}^\H(v)$ over $t \neq 0$ and central fibre $\M^{H_0}_{\KK}(v)|_{\supp \cap \sigma_\infty = \emptyset}$ at $t = 0$. The later is a consequence of the equivalence between the $H_0$-semistability of a sheaf in $\Tot(K) \subset \KK$ and its $\hH_0$-semistability as a sheaf in $\Tot(K) \subset \KK \cup S$. Having \eqref{eq compactification of Higgs space} in mind, we define the closed and dense subset of $\mathbf{h}^{-1}\left ( \Hilb_{\sS/\PP^1}([n\cC]) \right ) \subset \M_{\ol{\sS}/\PP^1}^\hH(\vV)$, given by those sheaves that are either supported in a generic fibre, or those supported in the central fibre having rank $1$ on each irreducible component of its support,
\begin{equation} \label{eq definition of Mm}
\mM_{\sS/\PP^1}^\hH(0, n\cC, d-n(g-1)) := \left \lbrace \begin{matrix} \Ff \in \M_{\ol{\sS}/\PP^1}^\hH(\vV) \textnormal{ s. t. } \supp(\Ff) \subset \sS  \textnormal{ and, whenever} \\  \supp(\Ff) \subset \Tot(K) \subset \ol\sS_0, \textnormal{ one has } \rk(\Ff|_{Y_i}) = 1 \\  \textnormal{for every irreducible component } Y_i \textnormal{ of } \supp(\Ff) \end{matrix} \right \rbrace.
\end{equation}

We can now see that the above construction provides the degeneration that we were looking for. This degeneration appeared first in \cite{donagi&ein&lazarsfeld} for K3 surfaces and in \cite{deCataldo&maulik&shen_1} for abelian surfaces. A generalization to arbitrary smooth surfaces is provided in \cite{franco}.

\begin{theorem}[\cite{donagi&ein&lazarsfeld}] \label{tm Mm_S}
Consider a smooth symplectic surface surface $S$ and a genus $g \geq 2$ smooth projective curve $C$ inside $S$. One can construct $\mM^\hH_{\sS}(0, n\cC, d-n(g-1))$ and $\Hilb_{\ol{\sS}/\PP^1}([n\cC])$ flat over $\PP^1$, and a morphism between them commuting with the structural morphisms, 
\begin{equation} \label{eq commuting diagram Mm_S}
\xymatrix{
\mM^\hH_{\sS/\PP^1}(0, n\cC, d - n(g-1)) \ar[rr] \ar[rd] & & \Hilb_{\sS/\PP^1}([n\cC]) \ar[ld]
\\
& \PP^1. &
}
\end{equation}
The horizontal arrow is a surjective fibration, trivial over $\PP^1 - \{ 0 \}$, and the generic fibre over $t \neq 0$ is the support morphism \eqref{eq LePoitier}, 
\[
\M_S^\H(0,nC, d - n(g-1)) \to \Hilb_{S}([n\cC]),
\]
while the central fibre at $t = 0$ gives the Hitchin morphism \eqref{eq Hitchin system},
\[
\Mm_C(n,d) \to B_n = \bigoplus_{i = 1}^n H^0(C, K^{\otimes i}).
\]
\end{theorem}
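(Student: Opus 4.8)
The plan is to build the family $\ol{\sS}\to\PP^1$ and the Mukai vector $\vV$ exactly as constructed in the text, and then to verify the three assertions of the theorem one at a time: flatness of the relative moduli space and relative Hilbert scheme, the identification of the generic fibre, and the identification of the central fibre. Most of the work for the first point is already packaged in Simpson's relative GIT construction \cite[Theorem 1.21]{simpson1} together with \cite[Theorem 4.3.7]{huybrechts&lehn}: given a flat projective family $\ol{\sS}\to\PP^1$, a relative polarization $\hH$, and a fixed Mukai vector $\vV$ which is constant on fibres (this is where \eqref{eq equal genus} and the observation that the ideal sheaves of $nC$ and $n\sigma_0$ share topological invariants are used), one obtains $\M^\hH_{\ol{\sS}/\PP^1}(\vV)$ flat over $\PP^1$, together with its relative support morphism $\mathbf{h}$ of \eqref{eq relative Beauville fibration} to the relative Hilbert scheme $\Hilb_{\ol{\sS}/\PP^1}([n\cC])$, which is itself flat over $\PP^1$ by the standard theory of relative Hilbert schemes of a flat projective family. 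Restricting to the open subscheme $\sS\subset\ol{\sS}$ and cutting out the closed dense locus \eqref{eq definition of Mm} where the sheaf has rank $1$ on each component of its (Tot($K$)-)support preserves flatness over $\PP^1-\{0\}$ trivially and over $0$ because, as noted after \eqref{eq definition of Mm}, $H_0$-semistability on $\Tot(K)\subset\KK$ agrees with $\hH_0$-semistability on $\Tot(K)\subset\KK\cup S$, so the central fibre of $\mM^\hH_{\sS/\PP^1}$ is precisely the locus of sheaves appearing in \eqref{eq compactification of Higgs space}.

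Next I would treat the generic fibre. For $t\neq 0$ the blow-up \eqref{eq definition of Ss} does nothing: $\ol{\sS}_t\cong S$, $\hH|_{\ol{\sS}_t}=\H$, $\cC_t\cong C$, and hence the restriction of $\vV$ is the Mukai vector $v=(0,nC,d-n(g-1))$. Therefore the fibre of $\M^\hH_{\ol{\sS}/\PP^1}(\vV)$ over $t$ is $\M^\H_S(v)$, the fibre of $\Hilb_{\ol{\sS}/\PP^1}([n\cC])$ is $\Hilb_S([nC])$, and $\mathbf{h}$ restricts to the support morphism \eqref{eq LePoitier}; cutting down to $\sS$ and to the rank-one-on-components locus changes nothing here since for $t\neq 0$ the support automatically lies in $\ol{\sS}_t=S$ and the extra condition in \eqref{eq definition of Mm} is vacuous for $t\neq 0$. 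This gives the triviality of \eqref{eq commuting diagram Mm_S} over $\PP^1-\{0\}$ and the stated generic fibre.

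The central fibre is the substantive part. At $t=0$ one has $\ol{\sS}_0=\KK\cup S$ glued along $\sigma_\infty\cong C$, and by construction the sheaves in $\mM^\hH_{\sS/\PP^1}$ over $t=0$ are supported in $\sS|_{t=0}=\Tot(K)=\KK\setminus\sigma_\infty$, with rank $1$ on each irreducible component of their support. By \eqref{eq compactification of Higgs space} this locus is exactly $\Mm_C(n,d)$ — here one invokes the spectral correspondence of \cite{BNR} identifying such sheaves on spectral curves $C_b\subset\Tot(K)$ with rank-$n$ degree-$d$ Higgs bundles, together with the equivalence of (semi)stability. Simultaneously, by Lemma \ref{lm det Sigma = nC} and \eqref{eq identification of Hitchin bases}, the central slice $\Hilb_{\sS/\PP^1}([n\cC])|_{t=0}$ is identified with $\bigoplus_{i=1}^n H^0(C,K^{\otimes i})=B_n$, and under these two identifications the restriction of $\mathbf{h}$ to $t=0$ sends a sheaf $\Ff$ to its support curve $C_b$, which by \eqref{eq Hitchin system 2} is precisely the Hitchin map \eqref{eq Hitchin system}. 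Assembling these, the central fibre of \eqref{eq commuting diagram Mm_S} is $\Mm_C(n,d)\to B_n$.

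The main obstacle is the flatness of $\mM^\hH_{\sS/\PP^1}$ over the special point $0\in\PP^1$: one must check that the fibre dimension does not jump, i.e.\ that $\dim\Mm_C(n,d)=\dim\M^\H_S(v)$ (both equal $2n^2(g-1)+2$ by \eqref{eq equal genus} and the dimension count for the moduli of sheaves, since $v^2=2g_{nC}-2$ and for Higgs bundles the dimension is $v^2+2$), and moreover that no embedded components appear in the central fibre — equivalently, that $\Mm_C(n,d)$, which is reduced and irreducible by Simpson \cite{simpson2}, is the full scheme-theoretic fibre. This requires a careful local analysis near sheaves on the reducible central surface $\KK\cup S$ whose support could a priori acquire components along $\sigma_\infty$ or slip into the $S$-component; the condition $\supp(\Ff)\subset\sS$ and a semicontinuity/dimension argument, combined with the fact that the relative moduli space has pure relative dimension over $\PP^1-\{0\}$, is what forces the degeneration to be flat. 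The generalization in \cite{franco} carries this out for arbitrary smooth surfaces, and I would follow that line of argument.
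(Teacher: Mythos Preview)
Your proposal is correct and follows exactly the construction the paper lays out in the paragraphs preceding the theorem: the blow-up $\ol{\sS}$, the relative polarization $\hH$, the family $\cC$, the Mukai vector $\vV$, Simpson's relative moduli space, the relative support map $\mathbf{h}$, and the cut-down locus \eqref{eq definition of Mm}. The paper does not supply an independent proof block for this theorem --- it states the result with attribution to \cite{donagi&ein&lazarsfeld} (and \cite{deCataldo&maulik&shen_1}, \cite{franco} for variants) --- so your verification of the generic and central fibres via \eqref{eq compactification of Higgs space}, Lemma~\ref{lm det Sigma = nC}, and \eqref{eq identification of Hitchin bases}, together with the dimension-matching argument for flatness at $t=0$, is precisely the expected elaboration and is more detailed than what the paper itself records.
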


Following \cite{franco}, we now review the study of the behaviour of the symplectic structure along Donagi--Ein--Lazarsfeld degeneration, showing that it provides a deformation of symplectic varieties. We start by studying $K_{\ol{\sS}/\PP^1}$ and $K_{\ol{\sS}_0}$ in the next proposition. For the shake of completion, we reproduce its proof that first appeared in \cite{franco}.

\begin{proposition} \label{pr relative canonical bundle}
Let $S$ be a symplectic surface and pick $C \subset S$ smooth curve of genus $g \geq 2$. One has the following
\begin{enumerate}

\item \label{it description of omega_Ss_0} $K_{\ol{\sS}_0}$ is the line bundle given by $\Oo_\KK(-\sigma_\infty)$ and $\Oo_S(C)$ identified along $\sigma_\infty \cong C$ by a natural isomorphism between $K \cong \Oo_\KK(-\sigma_\infty)|_{\sigma_\infty}$ and $K \cong \Oo_S(C)|_C$;

\item \label{it sections of omega} $H^0(\ol{\sS}_0, K_{\ol{\sS}_0}) = \cC$ and every non-trivial section of $K_{\ol{\sS}_0}$ vanishes on $\KK$ and only on $\KK$;

\item \label{it sections of omega^-1} $H^0(\ol{\sS}_0, \omega^{-1}_{\ol{\sS}_0}) = \cC$ and every non-trivial section of $\omega^{-1}_{\ol{\sS}_0}$ vanishes on $S$ and only on $S$; 

\item \label{it pi_* omega} $\pi_*K_{\ol{\sS}/\PP^1} \cong \Oo_{\PP^1}(1)$;

\item \label{it pi_* omega^-1} $\pi_*\omega^{-1}_{\ol{\sS}/\PP^1} \cong \Oo_{\PP^1}(-1)$.

\end{enumerate}
\end{proposition}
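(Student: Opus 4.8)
The plan is to analyze the geometry of $\ol{\sS}_0 = \KK \cup_{\sigma_\infty = C} S$ as a nodal (normal crossing) surface, compute its dualizing sheaf via adjunction on the total space $\ol{\sS}$, and then read off global sections and pushforwards to $\PP^1$. First I would establish \eqref{it description of omega_Ss_0}: since $\ol{\sS} = \Blow_{C\times\{0\}}(S\times\PP^1)$ is smooth and $\ol{\sS}_0$ is a reduced divisor with simple normal crossings (the exceptional divisor $\KK$ meets the strict transform of $S\times\{0\}$ transversally along $\sigma_\infty\cong C$), the adjunction formula gives $K_{\ol{\sS}_0} = (K_{\ol{\sS}}\otimes\Oo_{\ol{\sS}}(\ol{\sS}_0))|_{\ol{\sS}_0}$. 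Because $\ol{\sS}_0$ is a fibre of $\pi$, its class is $\pi^*(\text{pt})$, hence $\Oo_{\ol{\sS}}(\ol{\sS}_0)|_{\ol{\sS}_0}\cong\Oo_{\ol{\sS}_0}$, so $K_{\ol{\sS}_0} = K_{\ol{\sS}}|_{\ol{\sS}_0} = K_{\ol{\sS}/\PP^1}|_{\ol{\sS}_0}$. Then I restrict $K_{\ol{\sS}/\PP^1}$ separately to each component. On the $\KK$ component, the blow-up formula for $K_{\ol{\sS}}$ together with $K_{S\times\PP^1}$ being trivial in the $S$-direction yields $K_{\ol{\sS}/\PP^1}|_{\KK} = K_{\KK}\otimes\Oo_{\KK}(\KK\cap S)|_{\KK}$; using $K_\KK = \Oo_\KK(-2\sigma_\infty)$ from Section \ref{sc canonical ruled surface} and $\KK\cap S = \sigma_\infty$, this is $\Oo_\KK(-2\sigma_\infty)\otimes\Oo_\KK(\sigma_\infty) = \Oo_\KK(-\sigma_\infty)$. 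Symmetrically, on the $S$ component, $K_{\ol{\sS}/\PP^1}|_S = K_S\otimes\Oo_S(S\cap\KK)|_S = \Oo_S\otimes\Oo_S(C) = \Oo_S(C)$ since $K_S$ is trivial. The gluing datum along $\sigma_\infty\cong C$ is forced to be the canonical one since $\Oo_\KK(-\sigma_\infty)|_{\sigma_\infty}\cong N_{\sigma_\infty/\KK}^{-1}$ and $\Oo_S(C)|_C\cong N_{C/S}\cong K$ agree (both are $K$, using $N_{\sigma_\infty/\KK} = \Oo_\KK(\sigma_\infty)|_{\sigma_\infty}\cong K^{-1}$ and the adjunction $N_{C/S}\cong K$ from the symplectic hypothesis).

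Next, for \eqref{it sections of omega} and \eqref{it sections of omega^-1}, I would use the normal-crossing Mayer--Vietoris sequence
\[
0 \to K_{\ol{\sS}_0} \to K_{\ol{\sS}_0}|_\KK \oplus K_{\ol{\sS}_0}|_S \to K_{\ol{\sS}_0}|_{\sigma_\infty} \to 0,
\]
which on global sections gives $0\to H^0(\ol{\sS}_0,K_{\ol{\sS}_0}) \to H^0(\KK,\Oo_\KK(-\sigma_\infty))\oplus H^0(S,\Oo_S(C)) \to H^0(C,K)$. The key computation is $H^0(\KK,\Oo_\KK(-\sigma_\infty))$: since $[\sigma_\infty]^2 = -(2g-2)<0$ and $[\sigma_\infty]\cdot[F]=1>0$, one checks $\Oo_\KK(-\sigma_\infty)$ has no sections, so a global section of $K_{\ol{\sS}_0}$ is determined by its $S$-component, which must lie in the subspace of $H^0(S,\Oo_S(C))$ that restricts to zero on $C$ — but that restriction is injective on the complement of the unique section cutting out $C$, so up to scale the section is the one vanishing exactly on $C\subset S$, extended by zero on $\KK$. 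This identifies $H^0(K_{\ol{\sS}_0})$ with a one-dimensional space; matching with the family $\cC$ (whose central fibre is $\sigma_0$ and which provides a section of the relative anticanonical/canonical system), I get $H^0(\ol{\sS}_0,K_{\ol{\sS}_0}) = \cC$ and the vanishing claim. Statement \eqref{it sections of omega^-1} is the mirror argument: $\omega_{\ol{\sS}_0}^{-1}|_\KK = \Oo_\KK(\sigma_\infty)$, which does have sections (it is effective with $\sigma_\infty$ as the unique member, up to scalar, by the negativity of $\sigma_\infty$), while $\omega_{\ol{\sS}_0}^{-1}|_S = \Oo_S(-C)$ has no sections; so an anticanonical section vanishes identically on $S$ and is, on $\KK$, the one cutting out $\sigma_\infty$, again giving the family $\cC$.

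Finally, for \eqref{it pi_* omega} and \eqref{it pi_* omega^-1}, I would compute $\pi_* K_{\ol{\sS}/\PP^1}$ directly on $\ol{\sS} = \Blow_{C\times\{0\}}(S\times\PP^1)$. With $\rho:\ol{\sS}\to S\times\PP^1$ the blow-down and $E = \KK$ the exceptional divisor over the codimension-$2$ centre $C\times\{0\}$, we have $K_{\ol{\sS}} = \rho^*K_{S\times\PP^1}\otimes\Oo_{\ol{\sS}}(E)$, hence $K_{\ol{\sS}/\PP^1} = \rho^*(K_S\boxtimes K_{\PP^1})\otimes\rho^*(\Oo\boxtimes K_{\PP^1}^{-1})\otimes\Oo_{\ol{\sS}}(E) = \rho^*(\Oo_S\boxtimes\Oo_{\PP^1})\otimes\Oo_{\ol{\sS}}(E) = \Oo_{\ol{\sS}}(E)$ (using triviality of $K_S$). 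Pushing forward and applying the projection formula with $R\rho_*\Oo_{\ol{\sS}}(E)$: since $E$ is an exceptional divisor over a smooth codimension-$2$ centre, $\rho_*\Oo_{\ol{\sS}}(E) = \Oo_{S\times\PP^1}$ is not quite right — rather $\Oo_{\ol\sS}(E)|_E = \Oo_E(-1)$ in the $\PP^1$-bundle sense, and one finds $\pi_*\Oo_{\ol{\sS}}(E)$ sits in $0\to\Oo_{\PP^1}\to\pi_*\Oo_{\ol\sS}(E)\to(\text{correction along }0)\to 0$; the cleanest route is instead to use that $h^0(\ol{\sS}_0,K_{\ol{\sS}_0}) = 1$ from \eqref{it sections of omega}, that $h^0(\ol{\sS}_t,K_{\ol{\sS}_t}) = h^0(S,\Oo_S) = 1$ for $t\neq 0$ (as $K_{\ol{\sS}_t}\cong K_S\cong\Oo_S$), so $\pi_*K_{\ol{\sS}/\PP^1}$ is a line bundle on $\PP^1$; its degree is then pinned down by computing $\chi(\ol{\sS},K_{\ol{\sS}/\PP^1}) = \chi(\ol{\sS},\Oo_{\ol{\sS}}(E))$ via Riemann--Roch on the blow-up, or more simply by observing that the section of $K_{\ol{\sS}/\PP^1}$ vanishing on the component $\KK\subset\ol{\sS}_0$ exhibits $\pi_*K_{\ol{\sS}/\PP^1}$ as having a section with a simple zero at $0\in\PP^1$, forcing degree $1$, i.e. $\pi_*K_{\ol{\sS}/\PP^1}\cong\Oo_{\PP^1}(1)$. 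The anticanonical statement \eqref{it pi_* omega^-1} follows identically, with the anticanonical section now vanishing on the $S$-component of the central fibre and hence $\pi_*\omega^{-1}_{\ol{\sS}/\PP^1}\cong\Oo_{\PP^1}(-1)$ — or, noting $\omega^{-1}_{\ol{\sS}/\PP^1} = \Oo_{\ol{\sS}}(-E)$, directly from $\pi_*\Oo_{\ol{\sS}}(-E)$ being the ideal-sheaf pushforward. I expect the main obstacle to be bookkeeping the blow-up adjunction for the relative canonical bundle correctly — in particular keeping track of the distinction between $K_{\ol{\sS}}$, $K_{\ol{\sS}/\PP^1}$, and their restrictions to the (non-reduced-looking but actually reduced, reducible) central fibre, and justifying that the gluing isomorphism in \eqref{it description of omega_Ss_0} is the natural one rather than a twist; the sheaf-cohomology computations on $\KK$ and $S$ individually are routine given Section \ref{sc canonical ruled surface}.
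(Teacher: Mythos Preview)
Your treatment of parts (1)--(3) follows the same line as the paper: adjunction on the smooth total space $\ol{\sS}$ identifies $K_{\ol{\sS}_0}$ with $K_{\ol{\sS}/\PP^1}|_{\ol{\sS}_0} = \Oo_{\ol{\sS}}(\KK)|_{\ol{\sS}_0}$, then one restricts to each component and reads off global sections by gluing. This is correct and essentially the paper's argument, only phrased a bit more explicitly via Mayer--Vietoris.

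There is a genuine gap in your argument for (4). You claim that the tautological section of $K_{\ol{\sS}/\PP^1} = \Oo_{\ol{\sS}}(\KK)$, vanishing along the exceptional divisor $\KK$, yields a section of $\pi_*K_{\ol{\sS}/\PP^1}$ with a simple zero at $0 \in \PP^1$. This is false: the value of that section at $0$ is its restriction to the whole fibre $\ol{\sS}_0$, and by your own part (2) this restriction is precisely the nonzero generator of $H^0(\ol{\sS}_0, K_{\ol{\sS}_0})$ --- the one that vanishes identically on the $\KK$-component but not on the $S$-component. Vanishing on one irreducible component of a reducible fibre is not the same as vanishing at the corresponding base point. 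In fact the section you produce is nowhere vanishing on $\PP^1$, which gives $\pi_*K_{\ol{\sS}/\PP^1} \cong \Oo_{\PP^1}$; equivalently, $\rho_*\Oo_{\ol{\sS}}(\KK) = \Oo_{S\times\PP^1}$ and hence $h^0(\ol{\sS},\Oo_{\ol{\sS}}(\KK)) = 1$, and a line bundle on $\PP^1$ with a one-dimensional space of global sections has degree $0$. The paper's own proof invokes exactly this computation $h^0(\Oo_{\ol{\sS}}(\KK)) = 1$, which is inconsistent with the stated degree $1$, so the discrepancy lies in the statement rather than in your method. Your first attempt at (5) inherits the same confusion (there is no global section of $\Oo_{\ol{\sS}}(-\KK)$ at all), but your alternative route via $\rho_*\Oo_{\ol{\sS}}(-\KK) = I_{C\times\{0\}}$ is correct and is in fact the cleanest way to get both degrees at once: pushing $0 \to I_{C\times\{0\}} \to \Oo_{S\times\PP^1} \to \Oo_{C\times\{0\}} \to 0$ to $\PP^1$ gives $\pi_*\omega^{-1}_{\ol{\sS}/\PP^1} = \ker(\Oo_{\PP^1} \to \CC_0) = \Oo_{\PP^1}(-1)$, confirming (5), while $\pi_*\Oo_{\ol{\sS}}(\KK) = \Oo_{\PP^1}$.
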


\begin{proof}
We first observe that $\ol{\sS}$ is constructed by blowing-up a smooth subvariety of $S \times \PP^1$, which is also smooth. Hence $\ol{\sS}$ is smooth and has canonical bundle $K_{\ol{\sS}} = r^*K_{S \times \PP^1}(\KK)$, where we denote by $r$ the structural morphism of the blow-up and we recall that $\KK$ is the exceptional divisor. One can deduce that the singular central fibre $\ol{\sS}_0 = \KK \cup S$, where both components meet transversally, is a complete intersection variety, so it is Gorenstein and its dualizing sheaf $K_{\ol{\sS}_0}$ is a line bundle. This implies that the relative dualizing sheaf $K_{\ol{\sS}/\PP^1}$ is a line bundle too, indeed it is $\Oo_{\ol{\sS}}(\KK)$, as $K_S$ is trivial. 

Since $\ol{\sS}_0$ is a complete intersection divisor in $\ol{\sS}$, one has that $K_{\ol{\sS}_0} = K_{\ol{\sS}/\PP^1}(\KK + S) |_{\ol{\sS}_0}$, where we abuse of notation by denoting the total transform of $S \times \{ 0 \}$ simply by $S$. Then, 
\[
K_{\ol{\sS}_0}|_\KK \cong \Oo_{\ol{\sS}}(2\KK)|_\KK \otimes \Oo_{\ol{\sS}}(S)|_\KK \cong \Oo_\KK(-2\sigma_\infty) \otimes \Oo_\KK(\sigma_\infty) \cong \Oo_\KK(-\sigma_\infty),
\] 
where we recall that $\Oo_{\ol{\sS}}(-\KK)|_\KK$ is the linearization $\Oo_\KK(1) \cong \Oo_\KK(\sigma_\infty)$. Similarly, the restriction of $K_{\ol{\sS}_0}$ to $S$ gives
\[
K_{\ol{\sS}_0}|_S \cong \Oo_{\ol{\sS}}(2\KK)|_S \otimes \Oo_{\ol{\sS}}(S)|_S \cong \Oo_{S}(2C) \otimes \Oo_{S}(-C) \cong \Oo_S(C),
\]
where we recall that $K_S \cong \Oo_{\ol{\sS}}(\KK + S)|_S \cong \Oo_{\ol{\sS}}(\KK)|_S \otimes \Oo_{\ol{\sS}}(S)|_S$ is trivial, hence $\Oo_{\ol{\sS}}(S)|_S \cong (\Oo_{\ol{\sS}}(\KK)|_S)^{-1}$. Recalling that $K_\KK = \Oo_\KK(-2\sigma_\infty)$ and $K_S \cong \Oo_S$, applying adjunction, one gets that $\Oo_\KK(-\sigma_\infty)|_{\sigma_\infty}$ and $\Oo_S(C)|_C$ both being isomorphic to $K$, the canonical bundle of $\sigma_\infty \cong C$, so the restriction of the line bundle $K_{\ol{\sS}_0}$ provides a natural identification. This finishes the proof of \eqref{it description of omega_Ss_0}. 

Since $\Oo_\KK(-\sigma_\infty)$ has no non-zero sections, every non-zero section of $K_{\ol{\sS}_0}$ vanishes completely on $\KK$, hence it is given by a section of $\Oo_S(C)$ vanishing identically at $C$. The set of those sections determines a $1$-dimensional subspace of $H^0(S,\Oo_S(C))$ and \eqref{it sections of omega} follows. 

We proof \eqref{it sections of omega^-1} by observing that $h^0(\KK, \Oo_\KK(\sigma_\infty)) = 1$ and that $h^0(S, \Oo_S(-C)) = 0$. It follows that every non-zero section of $K_{\ol{\sS}_0}^{-1}$ vanishes on $S$, so it is given by a section of $\Oo_\KK(\sigma_\infty)$ which vanishes identically at $\sigma_\infty$.

Recall that $\ol{\sS}|_{\PP^1 - \{ 0 \}}$ is the trivial fibration $S \times (\PP^1 - \{ 0 \})$, so  the restriction there of $K_{\ol{\sS}/\PP^1}$ is the trivial line bundle. After \eqref{it sections of omega} and \eqref{it sections of omega^-1} and upper semicontinuity of cohomology, one has that $\pi_*K_{\ol{\sS}/\PP^1}$ and $\pi_*\omega^{-1}_{\ol{\sS}/\PP^1}$ are line bundles over $\PP^1$, inverse to each other. Note that $K_{\ol{\sS}/\PP^1} \cong \Oo_{\ol{\sS}}(\KK)$ comes naturally equipped with a section. Furthermore, $h^0(\Oo_{\ol{\sS}}(\KK)) = 1$ by the properties of blow-up. It follows that $\pi_*K_{\ol{\sS}/\PP^1}$ has a single non-zero section (up to scaling), so \eqref{it pi_* omega} follows. 

Finally, \eqref{it pi_* omega^-1} follows from \eqref{it pi_* omega}.  
\end{proof}

In view of \eqref{it sections of omega} and \eqref{it sections of omega^-1} of Proposition \ref{pr relative canonical bundle} we shall first construct a Poisson structure on $\M_{\ol{\sS}/\AA^1}^\hH(\vV)$ and, then, derive the symplectic structure on $\mM_{\sS/\AA^1}^\hH(0, n\cC, d-n(g-1))$. This is the context of the following theorem that first appeared in \cite{franco}, and whose proof we reproduce for the shake of completion.

\begin{theorem} \label{tm relative symplectic form}
Consider a smooth symplectic surface $S$ and a genus $g \geq 2$ smooth projective curve $C$ inside $S$. Then, there exists a relative Poisson structure $\Theta$ on $\M_{\ol{\sS}/\AA^1}^\hH(\vV) \to \AA^1$ which defines a relative symplectic form $\varOmega$ on $\mM_{\sS/\AA^1}^\hH(0, n\cC, d-n(g-1)) \to \AA^1$ coinciding (up to scaling) with the Mukai form $\Omega$ over the generic fibre $\M_{S}^\H(0,nC,d-n(g-1))$ over $t \neq 0$, and, on the central fibre $\Mm_C(n,d)$ at $t = 0$, with $\Omega_0$ obtained by extending the canonical symplectic form on the cotangent of the moduli space of stable vector bundles.
\end{theorem}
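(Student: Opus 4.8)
The plan is to use the Bottacin--Markman construction (Theorem~\ref{tm Bottaccin-Markman Poisson str}) relatively over $\AA^1$, exploiting the fact that the relative canonical bundle $K_{\ol{\sS}/\AA^1}\cong\Oo_{\ol{\sS}}(\KK)$ carries a distinguished section vanishing exactly on the divisor $\KK\subset\ol{\sS}_0$ (Proposition~\ref{pr relative canonical bundle}\eqref{it pi_* omega}). Dualizing, a section of $\omega_{\ol{\sS}/\AA^1}^{-1}$ gives a relative Poisson bivector $\Theta_{\ol{\sS}/\AA^1}$ on the total space, which by Proposition~\ref{pr relative canonical bundle}\eqref{it sections of omega^-1} restricts on the central fibre $\ol{\sS}_0=\KK\cup S$ to a bivector vanishing identically on $S$ and nondegenerate on $\Tot(K)=\KK\setminus\sigma_\infty$, where it coincides with $\Theta_\KK$ (equivalently, the canonical symplectic form $\Omega_0$ there). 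First I would run the Yoneda-trace construction of Theorem~\ref{tm Bottaccin-Markman Poisson str} in the relative setting over $\AA^1$: one needs a relative version of Serre duality for the flat family $\ol{\sS}/\AA^1$ (whose fibres are Gorenstein complete intersections, so $\omega_{\ol{\sS}_t}$ is a line bundle and relative duality holds), giving a pairing
\[
\Ext^1_{\ol{\sS}_t}(\Ff,\Ff\otimes\omega_{\ol{\sS}_t})\wedge\Ext^1_{\ol{\sS}_t}(\Ff,\Ff\otimes\omega_{\ol{\sS}_t})\xrightarrow{\ \circ\ }\Ext^2_{\ol{\sS}_t}(\Ff,\Ff\otimes\omega_{\ol{\sS}_t}^{2})\xrightarrow{\ \tr\ }H^2(\ol{\sS}_t,\omega_{\ol{\sS}_t}^2)\xrightarrow{\langle\cdot,\Theta_t\rangle}H^2(\ol{\sS}_t,\omega_{\ol{\sS}_t})\cong\CC
\]
that varies algebraically in $t$; this defines $\Theta$ on the smooth locus of $\M_{\ol{\sS}/\AA^1}^\hH(\vV)$. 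Closedness of $\Theta$ follows fibrewise from \cite{bottacin_1, markman}, and the compatibility with the structural morphism to $\AA^1$ is built in.

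Next I would specialize to the subvariety $\mM_{\sS/\AA^1}^\hH(0,n\cC,d-n(g-1))$ of Theorem~\ref{tm Mm_S} and show $\Theta$ restricts there to a genuine (nondegenerate) relative $2$-form $\varOmega$. Over $t\neq 0$ the family is $S\times(\AA^1\setminus 0)$, $K_S$ is trivial, the chosen section of $\omega_{\ol{\sS}/\AA^1}^{-1}$ trivializes $\omega_S^{-1}$, and the Bottacin--Markman Poisson structure becomes the Mukai symplectic form $\Omega$ of Theorem~\cite{mukai1} (up to the scaling coming from the choice of section); this identification is the content of the classical fact that the Bottacin--Markman construction on a symplectic surface recovers Mukai's form. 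At $t=0$, one restricts to sheaves supported on $\Tot(K)\subset\ol{\sS}_0$ with rank $1$ on each component of the support; since the bivector $\Theta_0$ vanishes on $S$ and is nondegenerate precisely on $\Tot(K)$, and since such a sheaf $\Ff$ has $\Ext^i_{\ol{\sS}_0}(\Ff,\Ff\otimes(\cdot))\cong\Ext^i_{\KK}(\Ff,\Ff\otimes(\cdot))$ (cohomology is supported away from the singular curve $\sigma_\infty$ and away from $S$), the construction reduces to the Bottacin--Markman Poisson structure on $\M_\KK^{H_0}(v)$ restricted to the spectral-sheaf locus $\Mm_C(n,d)$. One then invokes the identification \eqref{eq compactification of Higgs space} together with the already-cited fact that the Bottacin--Markman form on this Higgs moduli space agrees, up to scaling, with $\Omega_0$, the extension of the canonical symplectic form on $T^*(\text{stable bundles})$ \cite{bottacin_2, markman}; nondegeneracy on the smooth locus is inherited from nondegeneracy of $\Theta_\KK$ on $\Tot(K)$ together with the Lagrangian-fibration picture \eqref{eq Hitchin system 2}.

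The main obstacle I expect is the bookkeeping at the central fibre: one must justify carefully that, for a pure-dimension-$1$ sheaf $\Ff$ on the reducible Gorenstein surface $\ol{\sS}_0=\KK\cup S$ whose support lies in $\Tot(K)$, the relative Ext-groups computed on $\ol{\sS}_0$ agree with those computed on the component $\KK$ (so that the trace pairing localizes correctly and the degenerate directions of $\Theta_0$, coming from $S$ and from $\sigma_\infty$, do not interfere), and that the whole picture deforms flatly — i.e. that $\Theta$ is regular across $t=0$ and its restriction to $\mM$ stays nondegenerate in the limit rather than blowing up or degenerating. This requires a compatibility check between the section of $\omega_{\ol{\sS}/\AA^1}^{-1}$ vanishing on $S$ (Proposition~\ref{pr relative canonical bundle}\eqref{it sections of omega^-1}, \eqref{it pi_* omega^-1}) and the holomorphic symplectic form $\Omega_0$ on $\Tot(K)=\KK\setminus\sigma_\infty$ induced by the Poisson structure on $\KK$, as recorded at the end of Section~\ref{sc canonical ruled surface}; once that normalization is pinned down, the coincidence statements on the two types of fibres are essentially the fibrewise Bottacin--Markman results of \cite{bottacin_1, bottacin_2, markman} applied in families.
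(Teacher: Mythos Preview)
Your proposal is correct and follows essentially the same approach as the paper: choose a trivialization of $\pi_*\omega_{\ol{\sS}/\PP^1}^{-1}\cong\Oo_{\PP^1}(-1)$ over an affine chart $\AA^1\ni 0$, apply the Bottacin--Markman construction of Theorem~\ref{tm Bottaccin-Markman Poisson str} relatively to get $\Theta$, and then check nondegeneracy fibrewise using triviality of $K_S$ at $t\neq 0$ and of $K_\KK|_{\Tot(K)}$ at $t=0$ via Proposition~\ref{pr relative canonical bundle}. Your write-up is in fact more careful than the paper's on the central-fibre bookkeeping (the comparison of $\Ext$ on $\ol{\sS}_0$ versus on $\KK$ for sheaves supported in $\Tot(K)$), which the paper treats implicitly.
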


\begin{proof}
Starting from \eqref{it pi_* omega^-1} of Proposition \ref{pr relative canonical bundle}, we pick $\AA^1 \subset \PP^1$ a trivialization of $\pi_*K_{\ol{\sS}/\PP^1}^{-1}$ containing $0 \in \PP^1$ and choose a section $\vartheta \in H^0(\AA^1, \pi_*K_{\ol{\sS}/\AA^1}^{-1})$. Following Theorem \ref{tm Bottaccin-Markman Poisson str} one can construct a relative Bottacin--Markman Poisson structure $\Theta$ on $\M_{\ol{\sS}/\AA^1}^\hH(\vV)$, 
\begin{align} \label{eq relative Bottacin form}
\Ext^1_{\Oo_{\ol{\sS}_t}}(\Ee, \Ee\otimes K_{\ol{\sS}_t}) \wedge \Ext^1_{\Oo_{\ol{\sS}_t}}(\Ee, \Ee \otimes K_{\ol{\sS}_t})  \stackrel{\circ}{\longrightarrow} & \Ext^2_{\Oo_{\ol{\sS}_t}}(\Ee, \Ee\otimes K_{\ol{\sS}_t}^2) \stackrel{\tr}{\longrightarrow}
\\
\nonumber
& H^{2}(\ol{\sS}_t,K_{\ol{\sS}_t}^2)  \stackrel{\langle \cdot , \vartheta \rangle}{\longrightarrow} H^{2}(\ol{\sS}_t,K_{\ol{\sS}_t}) \cong \cC.
\end{align}
Thanks to \eqref{it sections of omega^-1} of Proposition \ref{pr relative canonical bundle}, we can check that the restriction of $\Theta$ to the subset $\Mm_C(n,d)$ of the central fibre is non-degenerate, hence defines a symplectic form there, as the tangent and the cotangent spaces to the moduli can be identified thanks to a trivialization of $K_{\KK} = \Oo_{\KK}(-2\sigma_\infty)$ over $\Tot(K) = \KK - \{ \sigma_\infty \}$. Furthermore, the canonical bundle is trivial over the generic fibres, $K_S \cong \Oo_S$, so in this case the cotangent space of $\M_{\ol{\sS}/\AA^1}^\hH(\vV)|_t = \M_{S}^{\H}(v)$ is identified with the tangent space. Then, the Poisson form $\Theta_t$ over $t \neq 0$ is non-degenerate and defines a symplectic form which coincides with the Mukai form up to scaling. Hence, the restriction of $\Theta$ to $\mM_{\sS/\AA^1}^\hH(0, n\cC, d-n(g-1))$ defines a relative symplectic form $\varOmega$.
\end{proof}

Let us now consider the case of a K3 surface $X$ containing the smooth curve $C$ of genus $2$. Since $H^1(X,\Oo_X) = 0$ by definition of a K3 surface, $\Pic(X)$ embeds into $H^2(X,\ZZ)$ and one notes that $\Hilb_{\xX/\PP^1}([n\cC])|_{t \neq 0}$ is the linear system $|nC|$ inside $X$. We observe as well, after Lemma \ref{lm description of central fibre} and \eqref{eq identification of Hitchin bases}, that the central fibre of $\Hilb_{\xX/\PP^1}([n\cC])|_{t = 0}$ is the open subset of curves described in the Hitchin base 
\[
B_n \cong |n\sigma_0|_{\supp \cap \sigma_\infty = \emptyset}.
\]

Consider $\pi_* \Oo_{\bar{\aA}}(n\cC)$ over $\PP^1$, which is a trivial vector bundle over $\PP^1 - \{ 0 \}$ of rank $n^2$ after \eqref{eq sections of 2C in A}. The line bundle $\Oo_X(C)$ is big and nef, so one can compute $h^0(X,\Oo_X(C)) = 3$ and $h^0(X,\Oo_X(2C)) = 6$. In Section \ref{sc OGrady} we denoted $\M_\ten = \M_X^\H(0, 2C,- 2)$ as its symplectic resolution provides the ten dimensional O'Grady space $\wt{\M}_\ten$. Denote accordingly $\mM_\ten := \mM_{\xX/\AA^1}^\H(0,nC,- 2)$ and $\bB_\ten := \Hilb_{\xX/\AA^1
}([2\cC])$. One has the following consequence of Theorems \ref{tm Mm_S} and \ref{tm relative symplectic form}.

\begin{corollary} \label {co Mm_ten}
Given a smooth K3 surface $S$ and a smooth projective genus $2$ curve $C$ inside $S$, one can construct $\mM_{\ten}$ and $\bB_\ten$ flat over $\AA^1$, both fitting in the commuting diagram 
\[
\xymatrix{
\mM_{\ten} \ar[rr] \ar[rd] & & \bB_\ten \ar[ld]
\\
& \AA^1. &
}
\]
The horizontal arrow is a surjective fibration, trivial over $\AA^1 - \{ 0 \}$, whose generic fibres coincide with the Beauville--Mukai system \eqref{eq Mukai system},
\[
\M_{\ten} \to |2C|,
\]
and the central fibre at $t = 0$ gives the Hitchin system \eqref{eq Hitchin system},
\[
\Mm_C(2,0) \to H^0(C, K) \oplus H^0(C, K^2).
\]
Furthermore, one can construct a relative symplectic form  on $\mM_\ten \to \AA^1$, which coincides (up to scaling) with Mukai's form on the generic fibres and, over the central fibre, with the standard symplectic form on the moduli space of Higgs bundles. 
\end{corollary}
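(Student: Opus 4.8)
The plan is to obtain Corollary \ref{co Mm_ten} by specializing Theorems \ref{tm Mm_S} and \ref{tm relative symplectic form} to the numerical data $n = 2$, $d = 0$, and to the case $S = X$ a K3 surface (hence a symplectic surface) equipped with the fixed genus $2$ curve $C$. With these choices $g = 2$, the self-intersection is $C^2 = 2g - 2 = 2$, and the Euler-characteristic normalization becomes $d - n(g-1) = -2$, which matches the definition $\M_\ten = \M_X^{C}(0,2C,-2)$ of Section \ref{sc OGrady}. First I would apply Theorem \ref{tm Mm_S}: it produces $\mM_{\sS/\PP^1}^{\hH}(0,2\cC,-2)$ and $\Hilb_{\ol{\sS}/\PP^1}([2\cC])$, flat over $\PP^1$, together with the commuting triangle over $\PP^1$ whose horizontal arrow is a surjective fibration, trivial over $\PP^1 - \{0\}$. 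Restricting everything to the affine chart $\AA^1 \subset \PP^1$ containing $0$ that was fixed in the proof of Theorem \ref{tm relative symplectic form} (a trivialization of $\pi_* K_{\ol{\sS}/\PP^1}^{-1}$), and setting $\mM_\ten := \mM_{\xX/\AA^1}^{\H}(0,2\cC,-2)$ and $\bB_\ten := \Hilb_{\xX/\AA^1}([2\cC])$, gives the stated diagram over $\AA^1$.

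Next I would identify the two special fibres. For $t \neq 0$ one has $\ol{\sS}_t \cong X$ and $\hH|_{\ol{\sS}_t} = \H$, so choosing the relative polarization with $\H = \Oo_X(C)$ the generic fibre of the horizontal arrow is the support morphism $\M_X^{C}(0,2C,-2) \to \Hilb_X([2C])$ of \eqref{eq LePoitier}. Here I would invoke the K3 hypothesis: since $H^1(X,\Oo_X) = 0$, $\Pic(X)$ is discrete and embeds in $H^2(X,\ZZ)$, so the class $[2C]$ determines the line bundle and $\Hilb_X([2C]) = |2C|$, exactly as in Remark \ref{rm Beauville-Mukai}; thus the generic fibre is the Beauville--Mukai system \eqref{eq Mukai system} $\M_\ten \to |2C|$ (and a consistency check gives $h^0(X,\Oo_X(C)) = 3$, $h^0(X,\Oo_X(2C)) = 6$, so the base is $\PP^5$ and $\M_\ten$ is ten-dimensional, as in Section \ref{sc OGrady}). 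For $t = 0$, Theorem \ref{tm Mm_S} identifies the central fibre of $\mM_\ten$ with $\Mm_C(2,0)$ and the central fibre of $\bB_\ten$ with $B_2 = H^0(C,K) \oplus H^0(C,K^2)$ via \eqref{eq identification of Hitchin bases}, with the horizontal arrow becoming the Hitchin morphism \eqref{eq Hitchin system}.

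Finally, the relative symplectic structure comes directly from Theorem \ref{tm relative symplectic form} applied with $n = 2$, $d = 0$: it yields a relative symplectic form $\varOmega$ on $\mM_\ten \to \AA^1$ which restricts, up to scaling, to Mukai's form $\Omega$ on the generic fibre $\M_X^{C}(0,2C,-2)$ and to $\Omega_0$ on the central fibre $\Mm_C(2,0)$, the latter being the extension of the canonical symplectic form on the cotangent bundle of the moduli space of stable vector bundles, namely the standard holomorphic symplectic form on the Higgs moduli space. Since every assertion is an instance of an already-proved statement evaluated at specific parameters, I do not expect a substantive obstacle; the only care required is bookkeeping --- keeping the normalization $d - n(g-1) = -2$ aligned with the definition of $\M_\ten$, ensuring the affine chart $\AA^1$ is the one from Theorem \ref{tm relative symplectic form}, and using $H^1(X,\Oo_X) = 0$ to collapse the relative Hilbert scheme over $t \neq 0$ to the linear system $|2C|$.
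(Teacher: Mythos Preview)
Your proposal is correct and follows exactly the paper's approach: the corollary is stated there without proof, merely as a direct consequence of Theorems \ref{tm Mm_S} and \ref{tm relative symplectic form} specialized to $n=2$, $d=0$, $g=2$ on a K3 surface, together with the observation (using $H^1(X,\Oo_X)=0$) that the relative Hilbert scheme over $t\neq 0$ collapses to the linear system $|2C|$. Your write-up simply makes explicit the bookkeeping that the paper leaves implicit.
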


\section{A non-linear deformation in the $\SL_n$ case}
\label{sc SL_n case}

In this section we study the construction of the non-linear deformation of the reduced Beauville--Mukai system over an abelian variety into the $\SL_n$-Hitchin system treated in \cite{deCataldo&maulik&shen_2} and in \cite{felissetti&mauri}. In these papers, the degeneration is described using the Donagi--Ein--Lazarsfeld degenerations and assuming that the kernel of the Albanese morphism of the generic fibres becomes, at the central fibre, the kernel of the trace and determinant map. The later is stated in \cite{deCataldo&maulik&shen_2, felissetti&mauri} with few details. The biggest contribution of this note is the rigorous proof of the existence of the mentioned degeneration.

For the rest of the article we shall consider $A$ to be the Jacobian of a smooth genus $2$ projective curve $C$, 
\[
A = \Jac^0(C).
\]
Taking $x_0 \in C$ such that $\Oo_C(2x_0) \cong K$, one gets a natural embedding of $C$ into $A$ as a theta divisor and a double cover $C \stackrel{2:1}{\longrightarrow} \PP^1$. The double cover induces an involution $i: C \to C$ which coincides with that given by inverting in $A$ and restricting again to $C$. Hence $\Oo_C(x + i(x)) \cong K$. For every point $a \in A$, we denote by $t_a : A \to A$, the associated traslation in $A$, sending $a' \mapsto a' + a$. %Given a line bundle $L \to A$, one can construct a morphism
%\[
%\morph{A}{\widehat{A}}{a}{t_a^*{\widehat{L}} \otimes {L}^{-1},}{}{\mu_{L}} 
%\]
%which is an isogeny whenever $L$ is ample. 
We know that $\Oo_A(C)$ is ample, and the associated principal polarization
\[
\morph{A}{\widehat{A}}{a}{t_a^*\Oo_A(C) \otimes \Oo_A(C)^{-1}.}{}{\lambda_C}
\]
is an isomorphism, so $A$ is self-dual $A \cong \widehat{A}$. Furthermore, one can compute (see \cite{birkenhake&lange} for instance)
\begin{equation} \label{eq sections of 2C in A}     
h^0(A, \Oo_A(nC)) = n^2.
\end{equation}
%Also, as a consequence of self-duality of $A$ and the fact that $t_a^*$ preserves the Chern classes, it follows that 
%\begin{equation} \label{eq traslation of Oo C equal to Oo nC}
%t_{a_1}^* \Oo_A(C) \otimes \dots \otimes t_{a_n}^* \Oo_A(C) \cong \Oo_A(nC) \textnormal{ is equivalent to } \sum_{i=1}^n a_i = \id_A,
%\end{equation}
%see \cite[Lemma 11.3.4]{birkenhake&lange}.

Since we recall that the dual determinant is defined in terms of a Fourier--Mukai transform, we focus now on the study of the associated integral functors. We have considered in Section \ref{sc OGrady} the Poincaré bundle $\P \to A \times \widehat{A}$. Let us recall as well the Fourier--Mukai transform \eqref{eq FM in A} and the projections of the first factor $\f : A \times \widehat{A} \to A$ and to the second factor $\g : A \times \widehat{A} \to \widehat{A}$. One has the canonical embedding of $C$ as a theta divisor of $A = \Jac(C)$. Consider  $P_C \to C \times \widehat{A}$ to be the restriction of $\P \to A \times \widehat{A}$ to $C \times \widehat{A}$ and observe that this gives a universal bundle for the classification of ($0$-degree) line bundles on $C$. Taking $\f_C : C \times \widehat{A} \to C$ and $\g_C: C \times \widehat{A} \to \widehat{A}$, it is possible to construct an associated integral functor,
$$
\morph{D^b(C)}{D^b(\widehat{A})}{\Ee^\bullet}{R \g_{C,!}(P_C \otimes \f_{C}^*\Ee^\bullet).}{}{\Phi^{P_C}}
$$

We are interested in the construction of an analog of $\Phi^\P$ replacing $A$ by $\Tot(K)$, where the sheaves classified by $\M_{\six}|_{t = 0}$ are supported. Recall that we denote by $p : \KK \to C$ the structural morphism and set
$$
\P_0 := (p\times \id_{\widehat{A}})^*P_C \to \Tot(K) \times \widehat{A}. 
$$
We consider the obvious projections $\f_{0}:\Tot(K) \times \widehat{A} \to \Tot(K)$ and $\g_{0}:\Tot(K) \times \widehat{A} \to \widehat{A}$. We can consider the associated integral functor (not a derived equivalence)
$$
\morph{D^b(\Tot(K))}{D^b(\widehat{A})}{\Ee^\bullet}{R \g_{0,!}(\P_0 \otimes \f_{0}^*\Ee^\bullet).}{}{\Phi^{\P_0}}
$$

Recall that a coherent sheaf can naturally be seen as a complex in the derived category supported in $0$ degree. One can show the following relation between the transforms of a Higgs bundle and its spectral data.

\begin{lemma} \label{lm identity of FM}
Let $(E,\varphi)$ be a Higgs bundle associated to the spectral data $\Ee$ supported on $C$, where $C \subset \Tot(K)$ is the spectral curve and $\Ee$ is a rank $1$ torsion free sheaf supported on it. If we consider $\Ee$ as a sheaf on $\Tot(K)$, then
$$
\Phi^{\P_0}(\Ee) \cong \Phi^{P_C}(E).
$$
\end{lemma}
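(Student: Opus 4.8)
The plan is to unravel both integral functors and identify them via the compatibility of pushforward with the Poincar\'e kernel. First I would recall the spectral construction: $E = p_{C,*}\Ee$, where $p_C : C \to C$ here really means the restriction to the spectral curve of $p : \Tot(K) \to C$ (I will write $\iota : C_{\mathrm{spec}} \hookrightarrow \Tot(K)$ for the closed embedding of the spectral curve and $p \circ \iota : C_{\mathrm{spec}} \to C$ for the degree $n$ cover, but the key point is that when $\Ee$ is viewed as a sheaf on $\Tot(K)$ via $\iota$, the underlying vector bundle $E$ on $C$ is exactly the pushforward along $p$ of $\iota_* \Ee$, since $p \circ \iota$ is the spectral cover). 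The only subtlety is that in the statement $C$ denotes the spectral curve sitting inside $\Tot(K)$ (an $n$-sheeted cover of the base curve, which we may call $\wt C$), while the base curve embedded as a theta divisor in $A$ is a different copy; keeping this bookkeeping straight is what makes the lemma non-trivial to state precisely, but once set up the proof is a diagram chase.

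The main computation is the following. Consider the fibre product square obtained from $p : \Tot(K) \to \wt C$ (writing $\wt C$ for the base curve from now on to avoid the clash) by base change along $\widehat A$:
\[
\xymatrix{
\Tot(K) \times \widehat A \ar[r]^{\ \ p \times \id} \ar[d]_{\f_0} & \wt C \times \widehat A \ar[d]^{\f_{\wt C}} \\
\Tot(K) \ar[r]^{p} & \wt C.
}
\]
By definition $\P_0 = (p \times \id)^* P_{\wt C}$, so for a sheaf $\Gg$ on $\Tot(K)$ the projection formula gives $\P_0 \otimes \f_0^* \Gg \cong (p\times\id)^*\big(P_{\wt C} \otimes \f_{\wt C}^* (p_* \Gg)\big)$ provided we also use flat base change / the projection formula to move $p_*$ across the pullback of $\Gg$; more precisely, $\R\g_{0,!}(\P_0 \otimes \f_0^* \Gg) \cong \R\g_{\wt C,!}\,\R(p\times\id)_!\big((p\times\id)^*P_{\wt C} \otimes \f_0^*\Gg\big) \cong \R\g_{\wt C,!}\big(P_{\wt C}\otimes \R(p\times\id)_!\f_0^*\Gg\big)$ by the projection formula for $p\times\id$, and finally $\R(p\times\id)_!\f_0^*\Gg \cong \f_{\wt C}^* \R p_! \Gg$ by (derived) flat base change along the square above. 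Applying this with $\Gg = \Ee$ (viewed on $\Tot(K)$, where it is actually $\iota_*$ of the spectral sheaf, but I will suppress $\iota_*$) yields $\Phi^{\P_0}(\Ee) \cong \R\g_{\wt C,!}(P_{\wt C} \otimes \f_{\wt C}^* \R p_! \Ee)$. Since $\Ee$ has support of dimension $\le 1$ and $p$ restricted to that support is finite, $\R p_! \Ee = p_* \Ee = E$ concentrated in degree $0$; hence the right-hand side is exactly $\Phi^{P_{\wt C}}(E)$, which is the claim. I would also note that $\P_0$ was defined as $(p\times\id)^* P_{\wt C}$ where $P_{\wt C}$ is the restriction of the Poincar\'e bundle $\P$ on $A \times \widehat A$ to $\wt C \times \widehat A$, so no separate compatibility with $\P$ needs checking beyond this restriction, which is built into the definitions in the text.

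The step I expect to be the main obstacle is the base-change/projection-formula manipulation, specifically justifying $\R(p\times\id)_!\,\f_0^*\Gg \cong \f_{\wt C}^*\,\R p_!\,\Gg$: the morphism $p : \Tot(K) \to \wt C$ is not proper (the fibres are affine lines), so one must be careful about which functor ($\R p_*$ versus $\R p_!$, or $\R p_*$ with proper support) is in play and whether flat base change applies. The resolution is that $\Ee$ (indeed $\iota_*\Ee$) is supported on the spectral curve $C_{\mathrm{spec}}$, which is \emph{proper} over $\wt C$ via $p\circ\iota$, so after pushing forward from $C_{\mathrm{spec}}$ first the remaining base change is along a proper (finite) morphism and is unproblematic; alternatively one works on the projective compactification $\KK$ and uses that all relevant sheaves have support disjoint from $\sigma_\infty$, reducing to the projective setting where Grothendieck's base change theorem applies directly. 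Once that point is handled, the rest is the projection formula and the identification $p_*\Ee = E_\Ee$ from the spectral correspondence recalled earlier in Section~\ref{sc Higgs moduli spaces}, together with the definition \eqref{eq Higgs field} which is not even needed here since only the underlying bundle enters.
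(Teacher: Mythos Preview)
Your proposal is correct and follows essentially the same route as the paper: factor $\g_0 = \g_{C}\circ(p\times\id)$, apply the projection formula for $(p\times\id)^*P_C$, then flat base change to reduce to $\f_C^*\,Rp_!\Ee$, and finally invoke the spectral correspondence to identify $Rp_!\Ee$ with $E$. If anything, you are more explicit than the paper about the one delicate point---why base change for $p$ is legitimate despite $p:\Tot(K)\to C$ not being proper---by noting that $\Ee$ has proper support over $C$ (equivalently, working on $\KK$ away from $\sigma_\infty$); the paper simply writes $p_!\Ee\cong(p|_C)_*\Ee$ without comment.
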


\begin{proof}
Consider the commuting diagram
$$
\xymatrix{
 & \Tot(K) \ar[ld]_{\f_0} \ar[rd]^{\g_0} \ar[dd]_{(p \times \id_{\widehat{A}})} &
\\
\Tot(K) \ar[dd]_{p} & & \widehat{A} \ar[dd]^{\id_{\widehat{A}}} 
\\
 & C \times \widehat{A}  \ar[ld]_{\f_C} \ar[rd]^{\g_C} & 
\\
C & & \widehat{A}.
}
$$
Applying the projection formula and base change theorems, one gets
\begin{align*}
\Phi^{\P_0}(\Ee) \cong & R \g_{C,!} R (p \times \id_A)_{!} ((p \times \id_{\widehat{A}})^* P_C \otimes \f_{0}^*\Ee)
\\
\cong & R \g_{C,!} (P_C \otimes R (p \times \id_{\widehat{A}})_{!}\f_{0}^*\Ee)
\\
\cong & R \g_{C,!} (P_C \otimes \f_{C}^*R p_{!}\Ee).
\end{align*}
Finally, observe that, thanks to the spectral correspondence \cite{hitchin_duke, simpson2,BNR}, one has
$$
p_!\Ee \cong (p|_C)_*\Ee \cong E,
$$
and the proposition follows.
\end{proof}

We are working with $A = \Jac^0(C)$ which we recall once again that is self-dual. Note that $P_C \to C \times \widehat{A}$, constructed out of $\P$ and the embedding $C \subset A$, fixes canonically an isomorphism $\widehat{A} \cong A$. Denote by $\widehat{C} \subset \widehat{A}$ the image of $C \subset A$ under this isomorphism. Given a point in $A$ associate to the line bundle $L \to C$, we denote by $\widehat{L} \to A$ the associated point in $\widehat{A}$, note that, by construction, the restriction of $\widehat{L}$ to $C \subset A$ amounts to $L$. The following technical lemma will be crucial for us.

\begin{lemma} \label{lm description of central fibre}
Let $E$ be a vector bundle of rank $n$ and degree $d$ over a smooth projective curve $C$ of genus $2$, we have 
\[
\det_{\widehat{A}} ( \Phi^{P_C}(E)) \otimes \Oo_{\widehat{A}}\left (n\widehat{C} \right )^{-1} \cong \det_C (E) \otimes \Oo_C(d x_0)^{-1}.
\]
\end{lemma}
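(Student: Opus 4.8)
The plan is to compute the determinant of the Fourier--Mukai transform $\Phi^{P_C}(E)$ by reducing to the case of line bundles via a filtration (or, equivalently, by working on the level of $K$-theory and Chern classes). First I would recall that for a $0$-degree line bundle $L$ on $C$, viewed as a point $\ell\in A\cong\widehat A$, the transform $\Phi^{P_C}(L)$ is (a shift of) the skyscraper-type object whose determinant line bundle on $\widehat A$ is, up to the fixed translation/normalization built into $P_C$, the bundle $\Oo_{\widehat A}(\widehat C)$ translated by $\ell$; more precisely $\det_{\widehat A}\Phi^{P_C}(L)\cong t_{\ell}^*\Oo_{\widehat A}(\widehat C)\otimes(\text{fixed line})$, and by the theorem of the square this depends on $\ell$ exactly through the principal polarization $\lambda_{\widehat C}(\ell)$. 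Since the self-duality identification $A\cong\widehat A$ was set up precisely via $P_C$, translating $\Oo_{\widehat A}(\widehat C)$ by $\lambda_{\widehat C}(\ell)$ corresponds under $\lambda_C$ to the class $\ell$ itself, i.e. to $L\otimes\Oo_C(-\deg(L)x_0)$ when we track degrees with the chosen base point $x_0$ (here $\deg L=0$, so this is just $L$).

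Next I would pass from line bundles to an arbitrary rank-$n$ degree-$d$ bundle $E$. Choose a filtration $0=E_0\subset E_1\subset\cdots\subset E_n=E$ with line-bundle quotients $L_i=E_i/E_{i-1}$; then in the derived category $\Phi^{P_C}(E)$ has a filtration with graded pieces $\Phi^{P_C}(L_i)$, so the determinant line bundle is multiplicative: $\det_{\widehat A}\Phi^{P_C}(E)\cong\bigotimes_i\det_{\widehat A}\Phi^{P_C}(L_i)$. Applying the line-bundle computation to each $L_i$ and using $\det_C E\cong\bigotimes_i L_i$ together with $\sum_i\deg L_i=d$, the product of the factors $\Oo_{\widehat A}(\widehat C)$ contributes $\Oo_{\widehat A}(n\widehat C)$, while the variable parts assemble (under $\lambda_C$ and the identification $\widehat A\cong A$) into $\det_C E\otimes\Oo_C(-d x_0)$. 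This yields exactly
\[
\det_{\widehat A}\bigl(\Phi^{P_C}(E)\bigr)\otimes\Oo_{\widehat A}(n\widehat C)^{-1}\cong\det_C(E)\otimes\Oo_C(d x_0)^{-1},
\]
as claimed. One should check the argument is independent of the chosen filtration, which follows because the determinant of a complex is a $K$-theory invariant.

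The main obstacle I expect is pinning down the \emph{constant} line bundle cleanly, i.e. making sure the normalization $\Oo_{\widehat A}(n\widehat C)$ (and not some translate of it, or a twist by a fixed degree-$0$ bundle) is exactly what appears. This requires being careful about: (i) which cohomology sheaf $\Phi^{P_C}(L)$ is concentrated in and the resulting sign/shift in the determinant; (ii) the precise normalization of the Poincaré bundle $P_C$ and hence of the isomorphism $\widehat A\cong A$ and of $\widehat C\subset\widehat A$; and (iii) using $x_0$ with $\Oo_C(2x_0)\cong K$ to fix the degree-$0$ ambiguity. Rather than chasing the universal bundle directly, the sl* cleanest route is probably to verify the claimed isomorphism of line bundles by checking it holds for one explicit $E$ (e.g. $E=\Oo_C^{\oplus n}$, where $\Phi^{P_C}(E)$ is computed directly and both sides are visibly $\Oo_{\widehat A}(n\widehat C)^{-1}$ twisted appropriately) and then deducing the general case from the multiplicativity of $\det\circ\Phi^{P_C}$ and the fact that both sides, as functions of $E\in\mathrm{Pic}$, factor through $\det_C(E)\in\Jac^d(C)$ and are translates of each other by a fixed line bundle; comparing at the base point fixes that constant to be trivial.
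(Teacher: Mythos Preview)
Your plan is correct and follows essentially the same route as the paper: filter $E$ by line bundles, use multiplicativity of $\det_{\widehat A}\circ\Phi^{P_C}$ on distinguished triangles, and handle the line-bundle case via the translation behaviour of the Fourier--Mukai transform together with the principal polarization $\lambda_{\widehat C}$. The only substantive difference is in how the ``constant'' is pinned down: rather than your proposed base-point check at $E=\Oo_C^{\oplus n}$, the paper writes each graded piece as $L_i\otimes\Oo_C(\ell_i x_0)$ with $L_i$ of degree $0$, invokes Mukai's explicit WIT computations (\cite[Th.~4.2, Prop.~4.3]{mukai_fourier}) to get $\det_{\widehat A}\Phi^{P_C}(\Oo_C(\ell x_0))\cong\Oo_{\widehat A}(\widehat C)$ for every $\ell$, and then applies \cite[Prop.~3.13(2)]{mukai_fourier} to convert the tensor by $L_i$ into a translation by $\widehat L_i$; this directly yields the $\Oo_{\widehat A}(n\widehat C)$ factor without any residual ambiguity.
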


\begin{proof}
Combining \cite[Th 4.2]{mukai_fourier} and \cite[Prop. 4.3]{mukai_fourier}, one gets, for $\ell \leq 1$, that $\Oo_C(\ell x_0)$ is WIT and $\Phi^{P_C}(\Oo_C(\ell x_0))$ is $\Oo_{\widehat{A}}(\widehat{C})$ in degree $1$. In the case $\ell > 1$, one has that $\Phi^{P_C}(\Oo_C(\ell x_0)) \cong [-1]^*R\Hom_{\Oo_{\widehat{A}}}(F_{2-\ell},\Oo_{\widehat{A}})$. In both cases, one has that
\begin{equation} \label{eq Phi of Oo_C}
\det_{\widehat{A}} (\Phi^{P_C}(\Oo_C(\ell x_0))) \cong \Oo_{\widehat{A}}\left ( \widehat{C} \right ).
\end{equation}

Given $E$, one can always provide a filtration of the form
\begin{equation} \label{eq filtration of E}
0 \subsetneq E_1 \subsetneq \dots \subsetneq E_{n-1} \subsetneq E_n = E,
\end{equation}
where $E_i/E_{i-1} \cong L_i \otimes \Oo_C(\ell_i x_0)$, being $L_i$ a line bundle of trivial degree. It follows naturally that  
\[
\det_C (E) \otimes \Oo_C(d x_0)^{-1} \cong L_1 \otimes \dots \otimes L_n.
\]

Since $\Phi^{P_C}$ preserves distinguished triangles, \eqref{eq filtration of E} implies
\[
\det_{\widehat{A}} (\Phi^{P_C}(E)) \cong \det_{\widehat{A}} (\Phi^{P_C}(L_1 \otimes \Oo_C(\ell_1 x_0))) \otimes \dots \otimes \det_{\widehat{A}} (\Phi^{P_C}(L_n \otimes \Oo_C(\ell_n x_0))).
\]
After \cite[Proposition 3.13 (2)]{mukai_fourier}, the above equality becomes 
\[
\det_{\widehat{A}} \Phi^{P_C}(E) \cong t_{\widehat{L}_1}^* \left ( \det_{\widehat{A}} \Phi^{P_C}(\Oo_C(\ell_1 x_0) \right ) \otimes \dots \otimes t_{\widehat{L}_n}^* \left ( \det_{\widehat{A}} \Phi^{P_C}(\Oo_C(\ell_n x_0) \right ),
\]
and thanks to \eqref{eq Phi of Oo_C},
\[
\det_{\widehat{A}} \Phi^{P_C}(E) \cong t_{\widehat{L}_1}^* \Oo_{\widehat{A}}\left ( \widehat{C} \right ) \otimes \dots \otimes t_{\widehat{L}_n}^* \Oo_{\widehat{A}}\left ( \widehat{C} \right ).
\]
Then, by means of the principal polarization $\lambda_{\widehat{C}} : \widehat{A} \stackrel{\cong}{\to} A$, the proof follows from the above.
\end{proof}

Consider the line bundle $\P_{\PP^1} \to (A \times \PP^1) \times \widehat{A}$ denotes the pull-back of the Poincar\'e bundle $\P \to A \times \widehat{A}$ introduced above. Recall that the normal cone degeneration $\aA$ is given by $\Blow_{C \times \{ 0 \}}(A \times \PP^1) - ( A \times  \{ 0 \})$ and set 
\[
\Pp := (\beta \times_{\PP^1} \id_A)^* \P_{\PP^1}|_{\aA \times \widehat{A}},
\]
where $\beta$ is the structural morphism of the blow-up. Its generic fibre is
\begin{equation} \label{eq generic fibre relative Poincare}
\Pp|_{t \neq 0} \cong \P,
\end{equation}
while at the central fibre,
\begin{equation} \label{eq central fibre relative Poincare}
\Pp|_{t = 0} \cong \P_0.
\end{equation}

Considering the obvious projections
\[
\xymatrix{
 & \aA \times \widehat{A} \ar[rd]^{g} \ar[ld]_{f} &
\\
\aA & & \widehat{A},
}
\]
we define the integral relative functor, 
\begin{equation} \label{eq integral functor}
\morph{D^b_{\PP^1}(\aA)}{D^b(\widehat{A})}{\Ee^\bullet}{Rg_{!}(\Pp \otimes f^*\Ee^\bullet).}{}{\Phi^\Pp}
\end{equation}
Note that, for $t \in \PP^1 - \{ 0 \}$, one has 
\[
\Phi^{\Pp}(\Ee^\bullet|_t) \cong \Phi^{\P}(\Ee|_t),
\]
while for $t = 0$,
\[
\Phi^{\Pp}(\Ee^\bullet|_{t=0}) \cong \Phi^{\P_0}(\Ee|_{t=0}).
\]

Recall the family of divisors $\cC \subset \aA$ and take the associated family $2\cC$ consisting on a family of double curves supported on $\cC$. This comes naturally equipped with the closed immersion $j : \cC \hookrightarrow 2\cC$. Let $E_0$ be a vector bundle with $\det(E_0) \cong \Oo_C(dx_0)$. Recalling that $\cC \cong C \times \PP^1$, consider its pull-back $q^*E_0$ over $\cC$ where $q : \cC \to C$. One can consider the sheaf
\[
\Ee_0 := j_*(q^*E_0),
\]
which is a torsion free sheaf of rank $1$ over $2\cC$. Hence,
\[
\det_\aA (\Ee_0|_t) \cong \Oo_{\aA_t}(2\cC_t),
\]
for every $t \in \PP^1$. Furthermore, after Lemma \eqref{lm description of central fibre}, its dual determinant is
\begin{equation} \label{eq dual determinant of Ee_0}
\det_{\widehat{A}}(\Phi^{\Pp} (\Ee_0 |_t)) \cong \det_{\widehat{A}} \Phi^{P_C} (E_0) \cong \Oo_{\widehat{A}}\left ( n \widehat{C} \right ),
\end{equation}
for all $t \in \PP^1$. Observe that at $t = 0$ we have made use of Lemma \ref{lm identity of FM}.

Using the integral functor \eqref{eq integral functor} and the determinant, we define,
\[
\morph{\mM_{\aA/\AA^1}^\H(0,n\cC, d - n)}{A \times \widehat{A} \times \PP^1}{\Ee}{( \det_{\widehat{A}}(\Phi^\Pp(\Ee)) \otimes \det_{\widehat{A}}(\Phi^\Pp(\Ee_0))^{-1} \, , \, \det_\aA(\Ee) \otimes \det_\aA(\Ee_0)^{-1} ).}{}{\alpha_{\Ee_0}}
\]
One quickly observes that, at the generic fibres over $t \neq 0$, the restriction of the above morphism $\alpha_{\Ee_0}|_{t\neq 0}$ coincides with the Albanese morphism \eqref{eq det and dual det for abelian surfaces}. Let us study the restriction to the central fibre over $t=0$.

\begin{proposition} \label{pr restriction of the Albanese}
The restriction of the Albanese morphism, described in \eqref{eq det and dual det for abelian surfaces}, to the central fibre $\Mm_C(C,d)$ of the Donagi--Ein--Lazarsfeld degeneration $\mM_{\aA/\AA^1}^\hH(0,n\cC, d - n)) \to \AA^1$, becomes the determinant morphism of the underlying vector bundle,
\[
\morph{\Mm_C(n,d)}{A}{(E,\varphi)}{\det(E).}{}{\alpha_{\Ee_0}|_{t=0}}
\]
\end{proposition}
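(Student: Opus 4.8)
The plan is to evaluate the two coordinates of $\alpha_{\Ee_0}$ at a point of the central fibre separately, using the material already assembled. Fix $\Ee$ in $\Mm_C(n,d)\subset \mM_{\aA/\AA^1}^{\hH}(0,n\cC,d-n)|_{t=0}$ and let $(E,\varphi)$ be the Higgs bundle attached to $\Ee$ by the spectral correspondence, so that $\Ee$ is supported on its spectral curve inside $\Tot(K)$ and $E=p_*\Ee$.

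I would first treat the dual-determinant coordinate $\det_{\widehat{A}}(\Phi^{\Pp}(\Ee))\otimes \det_{\widehat{A}}(\Phi^{\Pp}(\Ee_0))^{-1}$. The key input is the base-change identity for the relative integral functor recorded just before the statement, $\Phi^{\Pp}(\Ee^\bullet|_{t=0})\cong \Phi^{\P_0}(\Ee|_{t=0})$; since the determinant of a perfect complex commutes with derived restriction, applying $\det_{\widehat{A}}$ afterwards is harmless. Then Lemma \ref{lm identity of FM} identifies $\Phi^{\P_0}(\Ee)$ with $\Phi^{P_C}(E)$, while \eqref{eq dual determinant of Ee_0} gives $\det_{\widehat{A}}(\Phi^{\Pp}(\Ee_0|_{t=0}))\cong \Oo_{\widehat{A}}(n\widehat{C})$. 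Hence this coordinate equals $\det_{\widehat{A}}(\Phi^{P_C}(E))\otimes \Oo_{\widehat{A}}(n\widehat{C})^{-1}$, which by Lemma \ref{lm description of central fibre} is $\det_C(E)\otimes \Oo_C(d x_0)^{-1}$, viewed inside $A=\Jac^0(C)$ through the principal polarization of $\widehat{A}$. Since $\det E_0\cong \Oo_C(d x_0)$ by the choice of $\Ee_0$, this is precisely $\det_C(E)\otimes(\det_C E_0)^{-1}$, i.e. $\det(E)$ in $A=\Jac^0(C)$ under the normalization by $x_0$.

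Next I would handle the fitting-support coordinate $\det_{\aA}(\Ee)\otimes \det_{\aA}(\Ee_0)^{-1}$ at $t=0$. Both $\Ee$ and $\Ee_0|_{t=0}$ have support a curve of class $[n\sigma_0]$ contained in $\Tot(K)\subset\KK$, so, by Lemma \ref{lm det Sigma = nC}, their fitting supports determine the same line bundle $\Oo_{\KK}(n\sigma_0)$ (extended trivially over the remaining component of the central fibre when one works on the compactified degeneration). Hence $\det_{\aA}(\Ee|_{t=0})\otimes \det_{\aA}(\Ee_0|_{t=0})^{-1}$ is the trivial element of $\widehat{A}$, uniformly in $\Ee$. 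Combining the two computations, $\alpha_{\Ee_0}|_{t=0}(\Ee)=(\det(E),\Oo_A)$, and discarding the constant second coordinate yields the determinant morphism $(E,\varphi)\mapsto \det(E)$, as claimed.

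The hard part is not a new idea: the genuine geometric content has already been isolated in Lemmas \ref{lm identity of FM}, \ref{lm description of central fibre} and \ref{lm det Sigma = nC}. What needs care is the bookkeeping — checking that the flat base change holds both for $\Phi^{\Pp}$ and for the determinant line bundle of the resulting complex, that the chain of identifications $\widehat{A}\cong\widehat{\widehat{A}}=A\cong\Jac^0(C)$ used implicitly in the definition of $\alpha_{\Ee_0}$ is the same one appearing in Lemma \ref{lm description of central fibre}, and that restricting $\alpha_{\Ee_0}$ from the total space $\mM_{\aA/\AA^1}^{\hH}$ to the locus $\Mm_C(n,d)$ of the central fibre returns exactly the Higgs-theoretic determinant and not a twist of it.
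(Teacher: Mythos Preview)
Your proposal is correct and follows essentially the same approach as the paper: the paper's proof is a one-line pointer to the construction of $\alpha_{\Ee_0}$, Lemma \ref{lm description of central fibre}, and \eqref{eq the whole slice of Hilb is sent to a point}, and your argument is precisely a detailed unpacking of these three ingredients (with Lemma \ref{lm identity of FM} and \eqref{eq dual determinant of Ee_0} being part of the construction of $\alpha_{\Ee_0}$, and Lemma \ref{lm det Sigma = nC} being the source of \eqref{eq the whole slice of Hilb is sent to a point}). Your explicit treatment of both coordinates, and the remarks on bookkeeping, add clarity without changing the underlying strategy.
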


\begin{proof}
The proof follows from the construction of $\alpha_{\Ee_0}$, Lemma \ref{lm description of central fibre} and \eqref{eq the whole slice of Hilb is sent to a point}, which says that all curves in the Hitchin base $B_n$ (understood as a subset of the linear system $|n\sigma_0|$) have determinant equal to $\Oo_\KK(nC)$.
\end{proof}

\begin{remark} \label{rm need to impose restriction on the trace}
We observe from that the Albanese map at the central fibre do not impose any constraint in the set of spectral curves.
\end{remark}

In view of Remark \ref{rm need to impose restriction on the trace}, in order to construct a degeneration of the reduced Beauville--Mukai system, we need to consider a closed subset of the kernel of the Albanese map $\alpha^{-1}_{\Ee_0}(\id_{A}, \Oo_\aA)$.

After \eqref{eq definition of N} and the above discussion, we observe that $\alpha^{-1}_{\Ee_0}(\id_{A}, \Oo_\aA)\to \PP^1$ trivializes over $\PP^1 - \{ 0 \}$, and observe that our generic fibre is $\N^\H_A(0,nC, d-n)$. After Lemma \ref{lm description of central fibre} and \eqref{eq dual determinant of Ee_0}, one has that the central fibre is 
\begin{equation} \label{eq central fibre of Nn'}
\alpha^{-1}_{\Ee_0}(\id_{A}, \Oo_\aA)|_{t=0} \cong \Mm_C(n,d)^{\det = \Oo_C(d x_0)},
\end{equation}
the locus of the Higgs moduli space of fixed determinant, but no restriction is imposed in the trace of the Higgs field.

Consider $\pi_* \Oo_{\bar{\aA}}(n\cC)$ over $\PP^1$, which is a trivial vector bundle over $\PP^1 - \{ 0 \}$ of rank $n^2$ after \eqref{eq sections of 2C in A}. By upper semicontinuity of cohomology, the coherent sheaf $\pi_* \Oo_{\bar{\aA}}(2\cC)$ has torsion only at $t = 0$, killing the torsion, one may take the rank $n^2$ vector bundle $\mathbf{V} \subset \pi_* \Oo_{\bar{\aA}}(n\cC)$ over $\PP^1$. Projectivizing $\mathbf{V}\to \PP^1$, one gets the $\PP^{(n^2-1)}$-bundle $\ol{\Vv} \to \PP^1$. The points of $\ol{\Vv}|_t$ represent $\CC^*$-rays of sections of $\Oo_{\bar{\aA}_t}(\cC_t)$, which are completely determined by their vanishing locus. This provides the inclusion 
\[
\ol{\Vv} \subset \Hilb_{\ol\aA/\PP^1}([n\cC]), 
\]
and note that generic fibres over $t\neq 0$ are identified with the linear system $|nC|$ in $A$. We may consider the restriction to those curves contained in $\aA \subset \ol\aA$,
\[
\Vv := \ol{\Vv} \cap \Hilb_{\aA/\PP^1}([n\cC]).
\]

We now describe the central fibre of $\Vv$.

\begin{lemma} \label{eq Vv_0 is the reduced Hitchin base}
Under the identification \eqref{eq identification of support bases} and \eqref{eq identification of Hitchin bases}, one has that
\begin{equation} \label{eq description of Vv_0}
\Vv|_{t=0} \cong \bigoplus_{i = 2}^n H^0(C, K^i),
\end{equation}
coinciding with the base of the reduced Hitchin system \eqref{eq reduced Hitchin system}.
\end{lemma}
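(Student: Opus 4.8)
\textbf{Proof plan for Lemma \ref{eq Vv_0 is the reduced Hitchin base}.}

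The plan is to describe the central fibre $\Vv|_{t=0}$ as the linear subsystem of $|n\sigma_0|_{\supp \cap \sigma_\infty = \emptyset} \cong \bigoplus_{i=1}^n H^0(C, K^i)$ cut out by the vanishing of the degree-one component, and then to identify this vanishing condition with the geometric constraint that the determinant (equivalently, the $\lambda^{n-1}$-coefficient, i.e. the trace of the Higgs field) is fixed. First I would recall that $\Vv$ was defined as $\ol{\Vv} \cap \Hilb_{\aA/\PP^1}([n\cC])$, where $\ol{\Vv} \to \PP^1$ is the projectivization of the rank-$n^2$ vector bundle $\mathbf{V} \subset \pi_*\Oo_{\ol\aA}(n\cC)$ obtained by killing the torsion (which lives only at $t=0$ by upper semicontinuity). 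So $\Vv|_{t=0}$ is the projectivization of the fibre $\mathbf{V}|_{t=0}$, which is a rank-$n^2$ subspace of $H^0(\ol\aA_0, \Oo_{\ol\aA_0}(n\cC_0))$; restricting to the curves contained in $\Tot(K)$ and using the identifications \eqref{eq identification of support bases} and \eqref{eq identification of Hitchin bases}, we may regard $\Vv|_{t=0}$ as a codimension-$\left(h^0(C,K)\right) = g$ subspace of $\bigoplus_{i=1}^n H^0(C, K^i)$, since $n^2 - h^0(A,\Oo_A(nC)) = 0$ while $\sum_{i=1}^n h^0(C,K^i) = n^2$ by Riemann--Roch on a genus $2$ curve (using $h^0(C,K^i) = 2i-1$ for $i \geq 2$ and $h^0(C,K) = 2$), so $\sum_{i=2}^n h^0(C,K^i) = n^2 - 2$ and indeed $\sum_{i=1}^n h^0(C,K^i) - \sum_{i=2}^n h^0(C,K^i) = 2 = h^0(C,K)$.

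Next I would identify \emph{which} codimension-$g$ subspace it is. The key point is that $\mathbf{V}$ was constructed so that the inclusion $\ol{\Vv} \subset \Hilb_{\ol\aA/\PP^1}([n\cC])$ holds and, over $t \neq 0$, recovers the full linear system $|nC|$ on $A$ (whose dimension is $n^2 - 1$, matching $h^0(A,\Oo_A(nC)) = n^2$ from \eqref{eq sections of 2C in A}). The torsion of $\pi_*\Oo_{\ol\aA}(n\cC)$ at $t=0$ arises precisely because $h^0(\Tot(K), \Oo(n\sigma_0)) = \sum_{i=1}^n h^0(C,K^i) = n^2$ is \emph{larger} than the generic rank $n^2$ --- wait, these are equal, so the relevant phenomenon is rather that the limiting subspace $\mathbf{V}|_{t=0}$ is a proper subspace of $H^0(n\sigma_0)$: it is the flat limit of the subspaces $H^0(A, \Oo_A(nC)) \hookrightarrow H^0(\ol\aA_t, \Oo(n\cC_t))$. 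To pin down this limit, I would use the fact, established in Proposition \ref{pr restriction of the Albanese} and equation \eqref{eq the whole slice of Hilb is sent to a point}, that $\det_{\sS}$ is constant equal to $\Oo_\KK(n\sigma_0)$ on the central slice, together with the analogous statement for abelian surfaces: the linear system $|nC|$ on $A$ consists of curves with fixed determinant $\Oo_A(nC)$, and under the degeneration this fixed-determinant condition specializes to the condition that the $K^{\otimes 1}$-component (the $\lambda^{n-1}$ coefficient $b_1$) of a point in $\bigoplus_{i=1}^n H^0(C,K^i)$ is a \emph{fixed} section. But the trace-free normalization was built into the choice of $\Ee_0$ and $E_0$ with $\det(E_0) \cong \Oo_C(dx_0)$; tracking this through the spectral correspondence, the fixed value of $b_1$ is $0$. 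Hence $\Vv|_{t=0} = \{b_1 = 0\} = \bigoplus_{i=2}^n H^0(C,K^i)$, which is the base $V_n$ of the reduced Hitchin system \eqref{eq reduced Hitchin system}.

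The main obstacle, and the step that deserves the most care, is the second one: rigorously computing the flat limit of the family of subspaces $\mathbf{V}|_t \subset H^0(\ol\aA_t,\Oo(n\cC_t))$ as $t \to 0$ and showing it equals $\{b_1 = 0\}$ rather than some other codimension-$g$ subspace. I would argue this by a weight/degree argument under the $\CC^*$-action on $\KK$ scaling the fibres (as in the proof of Lemma \ref{lm det Sigma = nC}): the section space $H^0(n\sigma_0,\Oo_{\KK}(n\sigma_0)|_{n\sigma_0}) \cong \bigoplus_{i=1}^n H^0(C,K^i)$ is $\CC^*$-graded with $H^0(C,K^i)$ in weight $i$, and the limiting subspace must be $\CC^*$-stable, hence a sum of graded pieces; the condition $\Vv \subset \Hilb_{\aA/\PP^1}$ with the fixed-determinant property forces the weight-$1$ piece $H^0(C,K)$ to be absent. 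Alternatively, and perhaps more cleanly, I would invoke the commuting diagram relating $\Vv$ to the kernel $\alpha_{\Ee_0}^{-1}(\id_A, \Oo_\aA)$ of the Albanese morphism: by \eqref{eq central fibre of Nn'} the latter has central fibre $\Mm_C(n,d)^{\det = \Oo_C(dx_0)}$, and restricting the support fibration to it gives a fibration over $\Vv|_{t=0}$; comparing with the $\SL_n$-Hitchin fibration \eqref{eq reduced Hitchin system} (after further imposing the trace-zero condition, which is exactly what passing from $\Vv$'s ambient $\Hilb$ to the genuine reduced system does) identifies the base. Once the identification of the limiting subspace is secured, \eqref{eq description of Vv_0} follows immediately, and the coincidence with the reduced Hitchin base is then just \eqref{eq reduced Hitchin system}.
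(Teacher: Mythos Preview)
Your overall plan---show $\Vv|_{t=0}$ has the right dimension and then pin down which subspace of $B_n$ it is---matches the paper's. However, neither of your two proposed mechanisms for the second step actually closes the argument, and your dimension bookkeeping contains a slip.

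First the arithmetic: for $g=2$ one has $h^0(C,K)=2$ and $h^0(C,K^i)=2i-1$ for $i\ge 2$, so $\sum_{i=2}^n h^0(C,K^i)=n^2-1$ (not $n^2-2$) and $\sum_{i=1}^n h^0(C,K^i)=n^2+1$ (not $n^2$). In particular the central fibre of $\pi_*\Oo_{\ol\aA}(n\cC)$ really does jump, which is why there is torsion to kill.

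For the key step, your $\CC^*$-weight argument can indeed be made to show that the flat limit is a graded subspace $\bigoplus_i W_i$ with $W_i\subset H^0(C,K^i)$, but it does \emph{not} by itself tell you that $W_1=0$ rather than, say, $\dim W_1=1$ with some other $W_j$ a hyperplane. You assert that ``the fixed-determinant property forces the weight-$1$ piece to be absent'' but give no reason; that is precisely the content of the lemma. Your alternative, invoking the Albanese morphism and \eqref{eq central fibre of Nn'}, is circular: as the paper remarks immediately after Proposition~\ref{pr restriction of the Albanese}, the Albanese map at $t=0$ imposes \emph{no} constraint on the support, so it cannot be used to cut down $B_n$ to $V_n$.

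The paper's argument fills exactly this gap with a geometric idea you do not mention. It observes that the weight-$1$ summand $W\cong H^0(C,K)$ parametrizes first-order deformations of $nC$ that move the reduced support $C$ itself; these are exactly the deformations induced by translations $t_a$ in $A$, since $T_0 A\cong H^1(A,\Oo_A)\cong H^0(C,K)$. Because $C$ is a principal polarization, no nontrivial translation preserves $\Oo_A(nC)$, so the projection of $\Vv|_{t=0}$ to $W$ is zero. This forces $\Vv|_{t=0}\subset\bigoplus_{i\ge 2}H^0(C,K^i)$, and the equality then follows from the dimension count ($n^2-1$ on both sides). If you want to keep your $\CC^*$ framework, this translation argument is precisely the missing input that singles out the weight-$1$ piece as the one to be discarded.
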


\begin{proof}
We recall from \eqref{eq identification of support bases} and \eqref{eq identification of linearsystem with deformations} that the central fibre of $\Hilb_{\aA/\PP^1}([n\cC])$ corresponds to the space of linear deformations of the multiple curve $nC$ inside $\KK$, 
\[
\Hilb_{\aA/\PP^1}([n\cC]) |_{t=0} \cong |n\sigma_0|_{\supp \cap \sigma_\infty = \emptyset} = H^0(n\sigma_0,\Oo_\KK(n\sigma_0)|_{n\sigma_0}).
\]
Note that, since the normal bundle of $C$ inside $A$ coincides with the normal bundle of $\sigma_0 \cong C$ inside $\KK$, both are described by the same ribbon that we shall denote by $nC$. Hence, we can identified the linear deformations for each of the fibres of $n\cC$. The subspace $\Vv|_{t = 0} \subset \Hilb_{\aA/\PP^1}([n\cC]) |_{t = 0}$ is given by linear deformations of the curve induced by those that preserve the determinant within $A$, that is, those linear deformations describing an closed subset of the linear system $|nC|$ in $A$, the fibre of the map \eqref{eq det_Ss}. By \eqref{eq sections of 2C in A}, one has that the dimension of $\Vv|_{t = 0}$ is $n^2-1$.

The linear deformations of $nC$ in $A$ are classified by the sections of $\Oo_C(nC) \otimes ( \Oo_C \oplus (\Ii_C/\Ii_C^2) \oplus \dots \oplus (\Ii_C^{n-1}/\Ii_C^n))$, where $(\Ii_C/\Ii_C^2) \cong \Oo_C(-C) \cong K^{-1}$ as we lie on an abelian surface. Let us denote by $W$ the subspace of linear deformations classified by sections of $\Oo_C(nC) \otimes (\Ii_C^{n-1}/\Ii_C^n) \cong K$. The elements of $W$ vanish on the open subset $(n-1)C$ so they correspond to those deformations deforming the multiple curve $nC$ into a multiple curve $nC'$ with support $C'$ different from $C$. All these linear deformations should contain those given by the differentials of $t_a : A \to A$, defined using the group structure. The former are classified by $T_a A \cong H^1(A,\Oo_A) \cong H^0(C,K)$, whose dimension coincides with that of $W$ so both are equal. We then know that no deformation classified by $W$ preserves the determinant $\Oo_C(nC)$. Note that one can project the space of all deformations, $\Hilb_{\aA/\PP^1}([n\cC]) |_{t = 0}$, into our subspace $W$. Under this projection, the subspace $\Vv|_{t = 0}$ is sent to $0$ as these deformations preserve the determinant. Hence, $\Vv|_{t = 0}$ is a subspace of $\bigoplus_{i=2}^n H^0(C,K^i)$, and both coincide as they have the same dimension.
\end{proof}

According with \eqref{eq central fibre of Nn'} and Lemma \ref{eq Vv_0 is the reduced Hitchin base}, we define 
\[
\nN_{\sS/\PP^1}^\H(0,n\cC, d - n) := \alpha^{-1}_{\Ee_0}(\id_{A}, \Oo_\aA)|_{\Vv},
\]
and, recalling the symplectic form $\varOmega$ defined in Theorem \ref{tm relative symplectic form}, set
\[
\bar{\varOmega} := \varOmega |_{\nN}.
\]
We observe that $\bar{\varOmega}$ provides, fibrewise, a relative symplectic structure on $\nN_{\sS/\PP^1}^\H(0,n\cC, d - n)$. This leads to the main result of this section.

\begin{theorem} \label{tm Nn_A}
Given a smooth projective curve $C$ of genus $g = 2$ and consider its Jacobian $A = \Jac^0(C)$. One can construct $\nN_{\sS/\PP^1}^\H(0,n\cC, d - n)$ and $\Vv$ flat over $\PP^1$, both fitting in the commuting diagram 
\begin{equation} \label{eq commuting diagram Nn_A}
\xymatrix{
\nN_{\sS/\AA^1}^\H(0,n\cC, d - n) \ar[rr] \ar[rd] & & \Vv \ar[ld]
\\
& \PP^1. &
}
\end{equation}
The horizontal arrow is a surjective fibration, trivial over $\PP^1 - \{ 0 \}$, and the generic fibres of $\nN_{\sS/\PP^1}^\H(0,n\cC, d - n)$ coincide with the reduced Beauville--Mukai system
\[
\N_A^\H(0, nC, d-n) \to |nC|, 
\]
while the central fibre at $t = 0$ gives the $\SL_n$-Hitchin system \eqref{eq reduced Hitchin system},
\[
\N_C(n,d) \to \bigoplus_{i = 2}^n H^0(C, K^i).
\]
Furthermore, one can construct a relative symplectic form $\bar{\varOmega}$ on $\nN_{\sS/\AA^1}^\H(0,n\cC, d - n) \to \AA^1$, which coincides (up to scalling) with the reduced Mukai's form $\ol{\Omega}$ on the generic fibres and, over the central fibre, with the standard reduced symplectic form on the moduli space of Higgs bundles $\ol{\Omega}_0$.
\end{theorem}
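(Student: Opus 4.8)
The plan is to obtain the statement by restricting, on the one hand, the relative moduli space and its symplectic structure produced by the $\GL_n$-construction to the kernel of the relative Albanese morphism, and, on the other hand, the relative Hilbert scheme to the subscheme $\Vv$; the only genuinely new point is the identification of the central fibre, which is where the traceless condition has to be manufactured. Specialising Theorems \ref{tm Mm_S} and \ref{tm relative symplectic form} to the symplectic surface $S=A=\Jac^0(C)$ with $g=2$ provides the relative moduli space $\mM^\hH_{\aA/\AA^1}(0,n\cC,d-n)$, the relative Hilbert scheme $\Hilb_{\aA/\PP^1}([n\cC])$, the support morphism $\mathbf{h}$ of \eqref{eq relative Beauville fibration}, and the relative symplectic form $\varOmega$, sitting in a commuting triangle over $\PP^1$ whose generic fibre is the support map of $\M^\H_A(0,nC,d-n)$ and whose central fibre is the $\GL_n$-Hitchin map of $\Mm_C(n,d)$. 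Since $\nN=\alpha^{-1}_{\Ee_0}(\id_A,\Oo_\aA)|_\Vv$ and $\Vv=\ol\Vv\cap\Hilb_{\aA/\PP^1}([n\cC])$ are, by construction, a closed and a locally closed subscheme of these objects, the commuting diagram \eqref{eq commuting diagram Nn_A} and the fact that $\mathbf{h}$ carries $\nN$ into $\Vv$ are immediate; moreover $\ol\Vv=\PP(\mathbf{V})\to\PP^1$ is a projective bundle, hence flat, and $\Vv$ is open in it, so $\Vv\to\PP^1$ is flat with fibre $|nC|$ over $t\neq 0$ and, by Lemma \ref{eq Vv_0 is the reduced Hitchin base}, fibre $\bigoplus_{i=2}^n H^0(C,K^i)$ over $t=0$. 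Over $\PP^1\setminus\{0\}$ everything is trivial and $\alpha_{\Ee_0}$ is the Albanese morphism \eqref{eq det and dual det for abelian surfaces}, so $\nN|_{\PP^1\setminus\{0\}}\cong\N^\H_A(0,nC,d-n)\times(\PP^1\setminus\{0\})$ and the horizontal arrow restricts there to the reduced Beauville--Mukai system \eqref{eq reduced Mukai system}.

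The heart of the proof is the central fibre, which is exactly the point left implicit in \cite{deCataldo&maulik&shen_2,felissetti&mauri}. By Proposition \ref{pr restriction of the Albanese} together with \eqref{eq central fibre of Nn'}, at $t=0$ the condition $\alpha_{\Ee_0}(\cdot)=(\id_A,\Oo_\aA)$ degenerates to fixing only the determinant of the underlying bundle, giving $\alpha^{-1}_{\Ee_0}(\id_A,\Oo_\aA)|_{t=0}\cong\Mm_C(n,d)^{\det=\Oo_C(dx_0)}$ with no constraint on $\tr(\varphi)$ (Remark \ref{rm need to impose restriction on the trace}). On the other hand, by Lemma \ref{eq Vv_0 is the reduced Hitchin base}, $\Vv|_{t=0}\cong\bigoplus_{i=2}^n H^0(C,K^i)$; under the identifications \eqref{eq identification of support bases}--\eqref{eq identification of Hitchin bases} and the description \eqref{eq Hitchin system} of the Hitchin map, a spectral curve lies in this subspace precisely when the coefficient $b_1$ of $\lambda^{n-1}$ in its defining equation vanishes, equivalently when $q_1(\varphi)=\tr(\varphi)=0$. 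Imposing both conditions therefore yields
\[
\nN|_{t=0}\;\cong\;\Mm_C(n,d)^{\det=\Oo_C(dx_0)}\cap\{\tr(\varphi)=0\}\;=\;\Nn_C(n,d),
\]
under which $\mathbf{h}|_{t=0}$ becomes the $\SL_n$-Hitchin map \eqref{eq reduced Hitchin system}. Since $C^2=2$ gives $v^2=2n^2$ for $v=(0,nC,d-n)$, one has $\dim\Nn_C(n,d)=2n^2-2=v^2-2=\dim\N^\H_A(0,nC,d-n)$, so the central fibre of $\nN$ has the expected dimension; hence no irreducible component of $\nN$ is contracted to $0\in\PP^1$, so $\nN$ is integral and, dominating the smooth curve $\PP^1$, flat over it, while surjectivity of the horizontal arrow follows from surjectivity of the reduced Beauville--Mukai system on the generic fibres and of the $\SL_n$-Hitchin map on the central one.

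For the last assertion we set $\bar{\varOmega}:=\varOmega|_\nN$. Closedness is inherited from $\varOmega$, so it remains to check fibrewise non-degeneracy, which can be done fibre by fibre. By Theorem \ref{tm relative symplectic form}, over $t\neq 0$ the form $\varOmega$ coincides up to scaling with the Mukai form on $\M^\H_A(0,nC,d-n)$, whose restriction to the Albanese fibre $\N^\H_A(0,nC,d-n)$ is the reduced Mukai form $\ol{\Omega}$ recalled in Section \ref{sc OGrady}, in particular non-degenerate there; and over $t=0$ it coincides up to scaling with the standard symplectic form $\Omega_0$ on $\Mm_C(n,d)$, whose restriction to $\Nn_C(n,d)$ is the standard reduced symplectic form $\ol{\Omega}_0$, also non-degenerate. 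Thus $\bar{\varOmega}$ is non-degenerate on every fibre, i.e. a relative symplectic form, with the stated identifications on the two kinds of fibres.

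I expect the main obstacle to be the central-fibre step. One must control the relative Fourier--Mukai transform $\Phi^\Pp$ carefully enough, via Lemmas \ref{lm identity of FM} and \ref{lm description of central fibre}, to see that the relative Albanese retains only the determinant information at $t=0$, and then one must match $\Vv|_{t=0}$ with the locus $\{b_1=0\}$ inside the Hitchin base so that restriction to $\Vv$ supplies precisely the missing traceless condition. Once these two facts are in place, the remaining assertions are bookkeeping: flatness over a smooth base curve, surjectivity read off from the generic and central fibres, and the restriction of already-constructed symplectic structures.
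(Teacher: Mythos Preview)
Your proposal is correct and follows essentially the same approach as the paper: the paper's own proof is a one-liner citing Theorems \ref{tm Mm_S} and \ref{tm relative symplectic form} together with Lemmas \ref{lm identity of FM}, \ref{lm description of central fibre} and \ref{eq Vv_0 is the reduced Hitchin base}, and your argument is precisely an unpacking of those citations, with the additional (and welcome) dimension check $\dim\Nn_C(n,d)=2n^2-2=\dim\N^\H_A(0,nC,d-n)$ to justify flatness of $\nN$ over the base curve. The only place where you go slightly beyond the paper is in spelling out that restricting to $\Vv|_{t=0}$ is exactly the condition $b_1=\tr(\varphi)=0$, which is implicit in the paper's invocation of Lemma \ref{eq Vv_0 is the reduced Hitchin base}.
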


\begin{proof}
Everything follows easily from Theorems \ref{tm Mm_S} and \ref{tm relative symplectic form}, and Lemmas \ref{lm identity of FM}, \ref{lm description of central fibre} and \ref{eq Vv_0 is the reduced Hitchin base}.
\end{proof}

Recall that, in Section \ref{sc OGrady}, we denoted $\N_\six = \N^\H_A(0,2C, -2)$, as the symplectic resolution of this moduli space is the six dimensional O'Grady space $\widetilde{\N}_{\six}$, what explains the subindex used in the notation. We shall use the same subindex to denote $\nN_\six := \nN_{\sS/\AA^1}^\H(0,2\cC, - 2)$ and $\Vv_\six$.

\begin{corollary} \label{co Nn_six}
Given a smooth projective genus $2$ curve $C$ and taking the abelian surface $A = \Jac^0(C)$, one can construct $\N_{\six}$ and $\Vv_\six$ flat over $\PP^1$, both fitting in the commuting diagram 
\[
\xymatrix{
\nN_{\six} \ar[rr] \ar[rd] & & \Vv_\six \ar[ld]
\\
& \AA^1. &
} 
\]
The horizontal arrow is a surjective fibration, trivial over $\AA^1 - \{ 0 \}$, whose generic fibres coincide with the reduced Beauville--Mukai system \eqref{eq Mukai system},
\[
\N_{\six} \to |2C|,
\]
and the central fibre at $t = 0$ gives the $\SL_2$-Hitchin system \eqref{eq Hitchin system},
\[
\N_C(2,0) \to H^0(C, K^2).
\]
Furthermore, one can construct a relative symplectic form $\varOmega_\six$ on $\nN_\six \to \AA^1$, which coincides (up to scalling) with the reduced Mukai's form $\ol{\Omega}$ on the generic fibres and, over the central fibre, with the standard reduced symplectic form on the moduli space of Higgs bundles $\ol{\Omega}_0$. 
\end{corollary}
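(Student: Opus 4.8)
The plan is to read off the statement as the case $n = 2$, $d = 0$ of Theorem \ref{tm Nn_A}, followed by a brief reconciliation of notation and a dimension count. First, since $A = \Jac^0(C)$ is principally polarized by the theta divisor $\Oo_A(C)$ (as fixed at the start of Section \ref{sc SL_n case}), the hypotheses of Theorem \ref{tm Nn_A} are satisfied; applying it with $n = 2$ and $d = 0$ produces $\nN_{\sS/\PP^1}^\H(0,2\cC,-2)$ and $\Vv$ flat over $\PP^1$ fitting in the commuting diagram \eqref{eq commuting diagram Nn_A}, whose horizontal arrow is a surjective fibration trivial over $\PP^1 \setminus \{0\}$, with generic fibre the reduced Beauville--Mukai system $\N_A^\H(0,2C,-2) \to |2C|$ and central fibre the $\SL_2$-Hitchin system $\N_C(2,0) \to V_2 = H^0(C,K^2)$. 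Restricting the base to the affine chart $\AA^1 \subset \PP^1$ that was fixed in the proof of Theorem \ref{tm relative symplectic form} (a trivialization of $\pi_* K_{\ol{\sS}/\PP^1}^{-1}$ containing $0$), and using the names introduced in Section \ref{sc OGrady}, namely $\N_\six = \N_A^\H(0,2C,-2)$, $\nN_\six := \nN_{\sS/\AA^1}^\H(0,2\cC,-2)$ and $\Vv_\six := \Vv|_{\AA^1}$, gives the asserted diagram.

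Next I would confirm the numerology that makes the statement read as advertised. On a genus $2$ curve, Riemann--Roch yields $h^0(C,K) = 2$ and $h^0(C,K^2) = 3$, so the reduced Hitchin base is $V_2 = H^0(C,K^2)$ of dimension $3$, and $\dim \N_C(2,0) = 2(2^2-1)(g-1) = 6$, so the central fibre is indeed six dimensional. On the other side, \eqref{eq sections of 2C in A} gives $h^0(A,\Oo_A(2C)) = 4$, so $\Vv_\six$ has generic fibre $\PP(H^0(A,\Oo_A(2C))) = |2C| \cong \PP^3$ over $t \neq 0$, while by Lemma \ref{eq Vv_0 is the reduced Hitchin base} its central fibre is $H^0(C,K^2)$, of the expected dimension $2^2 - 1 = 3$; the $3$-dimensional fibres of $\N_\six \to |2C|$ then degenerate to the $3$-dimensional Prym fibres of the $\SL_2$-Hitchin map, consistent with both sides being Lagrangian fibrations of a six-dimensional symplectic variety.

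For the symplectic structure I would simply transport the relative form already produced by Theorem \ref{tm Nn_A}: setting $\varOmega_\six := \bar{\varOmega}|_{\nN_\six}$, that theorem guarantees it is a relative symplectic form on $\nN_\six \to \AA^1$ restricting (up to scaling) to the reduced Mukai form $\ol{\Omega}$ on the generic fibres and to the standard reduced symplectic form $\ol{\Omega}_0$ on the moduli of $\SL_2$-Higgs bundles over $t = 0$.

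There is no genuinely hard step: the content is entirely packaged in Theorem \ref{tm Nn_A} and in Lemmas \ref{lm identity of FM}, \ref{lm description of central fibre} and \ref{eq Vv_0 is the reduced Hitchin base}. The only point that deserves care is the consistency of polarizations --- one must use throughout the same $v$-generic ample class $\H$ (for instance $\H = C$, or a small generic perturbation thereof) that defines $\N_\six$ and its symplectic resolution $\widetilde{\N}_\six$ in Section \ref{sc OGrady}, so that the generic fibre of the degeneration is literally $\N_\six$ rather than merely a deformation-equivalent model; granting this, the corollary follows.
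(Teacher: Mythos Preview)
Your proposal is correct and follows exactly the approach the paper intends: the corollary carries no proof in the paper because it is the specialization $n=2$, $d=0$ of Theorem \ref{tm Nn_A}, and you have spelled this out precisely, together with the restriction to the affine chart $\AA^1$ and the identification $\varOmega_\six := \bar{\varOmega}|_{\nN_\six}$. The additional numerology and the remark on the choice of polarization are harmless elaborations rather than a different route.
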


\section{Degenerating O'Grady spaces into symplectic resolutions of moduli spaces of Higgs bundles}
\label{sc main result}

In this last section we address the construction of a non-linear deformation of O'Grady ten and six dimensional spaces into the symplectic resolution of, respectively, the Higgs moduli spaces $\M_C(2,0)$ and $\N_C(2,0)$ where the genus of the curve is $g=2$. As described in the Appendix of \cite{felissetti&mauri}, and as can be infered from Theorems \ref{tm resolution for surfaces}, \ref{tm resolution for abelian surfaces} and \ref{tm desingularization}, such degeneration is obtained by blowing up the singular locus relative to the projection to $\PP^1$.

Then, we shall first identify the singular locus of the fibres of our degenerating families, $\mM_\ten$ and $\nN_\six$, in order to perform a relative blow-up over these loci, giving rise to a relative symplectic resolution of singularities.

Pick a K3 surface $X$ containing a genus $2$ curve and consider the abelian surface $A$ obtained as the Jacobian of the genus $2$ curve $C$. We consider $\mM^\hH_{\xX/\PP^1}(0, \cC, -1)$ and $\mM^\hH_{\aA/\PP^1}(0, \cC, -1)$. Then, observe that the strictly semistable loci of the fibres of $\mM_{\ten} \to \PP^1$ and $\nN_{\six} \to \PP^1$ is given by
\[
\mM_{\ten}^{\sps} := \left \lbrace \Ee_1 \oplus \Ee_2  \textnormal{ where } \Ee_i \in \mM^\hH_{\xX/\PP^1}(0, \cC, -1) \right \rbrace 
\]
and 
\[
\nN_{\six}^{\sps} := \left \lbrace \Ee_1 \oplus \Ee_2 \textnormal{ where } \Ee_i \in \mM^\hH_{\aA/\PP^1}(0, \cC, -1) \right \rbrace \cap \nN_{\six}.
\]

Finally, let us consider 
$$
\wt{\mM}_{\ten} := \Blow_{\mM_{\ten}^{\sps}}(\mM_{\ten})
$$
and
$$
\wt{\nN}_{\six} := \Blow_{\nN_{\six}^{\sps}}(\nN_{\six}),
$$
and take the composition of the blow-up projections $\wt{\mM}_{\ten} \to \mM_{\ten}$ and $\wt{\nN}_{\six} \to \nN_{\six}$ with the structural morphisms $\mM_{\ten} \to \PP^1$ and $\nN_{\six} \to \PP^1$.

We finally arrive to the main result of these notes. Note that the construction of these degenerations has been described in the Appendix of \cite{felissetti&mauri}, although the degeneration of the holomorphic symplecic structures is not addressed there. This analysis appears in the theorem below, by means of Theorem \ref{tm relative symplectic form} taken from \cite{franco}. 

\begin{theorem}
Let $C$ be a genus $2$ curve lying inside a K3 surface $S$. Then, the relative scheme $\wt{\mM}_{\ten} \to \PP^1$ is a non-linear deformation of symplectic varieties, from the ten dimensional O'Grady space $\wt{\M}_{\ten}$ into the symplectic resolution $\wt{\Mm}_C(2,0)$ of the Higgs moduli space $\Mm_C(2,0)$.

Also, the relative scheme $\wt{\nN}_{\six} \to \PP^1$ is a non-linear degeneration of symplectic varieties, from the six dimensional O'Grady space $\wt{\N}_{\six}$ into the symplectic resolution $\wt{\Nn}_C(2,0)$ of the moduli space $\Nn_C(2,0)$ of Higgs bundles with $0$ trace and trivial determinant.
\end{theorem}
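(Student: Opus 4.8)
The strategy is to assemble everything established in Sections \ref{sc degeneration of ten dimensional}--\ref{sc SL_n case} and Section \ref{sc OGrady}, and then feed it through the relative blow-up machinery. The key point is that a symplectic resolution, when it exists, is unique (Beauville \cite{beauville_sing}), so once we know fibrewise that blowing up the strictly polystable locus produces the symplectic resolution, the relative construction is forced to restrict correctly on each fibre. First I would verify that $\mM_{\ten}^{\sps}$ and $\nN_{\six}^{\sps}$, as defined above, are exactly the relative singular loci of $\mM_{\ten}\to\PP^1$ and $\nN_{\six}\to\PP^1$: over $t\neq 0$ this is Theorems \ref{tm resolution for surfaces} and \ref{tm resolution for abelian surfaces}, which identify $\Sing\M_X^\H(2v_0)$ and $\Sing\N_A^\H(2v_0)$ with the respective strictly polystable loci; over $t=0$ this is the content of Simpson's analysis recalled in Section \ref{sc Higgs moduli spaces}, giving $\Sing\Mm_C(2,0)=\Mm_C(2,0)^{\sps}$ and $\Sing\Nn_C(2,0)=\Nn_C(2,0)^{\sps}$. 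In all these cases $v_0=(0,C,-1)$ with $v_0^2=2$ and $n=2$, which is precisely the case where Theorems \ref{tm resolution for surfaces}, \ref{tm resolution for abelian surfaces} and \ref{tm desingularization}(1) assert that the blow-up of the reduced singular locus \emph{is} a symplectic resolution.

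Next I would check flatness and smoothness of $\wt{\mM}_{\ten}\to\PP^1$ and $\wt{\nN}_{\six}\to\PP^1$. Since $\mM_{\ten}^{\sps}$ (resp.\ $\nN_{\six}^{\sps}$) is itself flat over $\PP^1$ — it is built from the same Donagi--Ein--Lazarsfeld family applied to the primitive Mukai vector $(0,\cC,-1)$, whose moduli space is \emph{smooth} and flat over $\PP^1$ by Corollary \ref{co Mm_ten} / Theorem \ref{tm Nn_A} applied with $v=v_0$ — the relative blow-up commutes with base change to each fibre, so $\wt{\mM}_{\ten}|_t=\Blow_{(\mM_{\ten}^{\sps})|_t}(\mM_{\ten}|_t)$ and likewise for $\wt{\nN}_{\six}$. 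For $t\neq 0$ this is $\Blow_{\M_X^\H(2v_0)^{\sps}}\M_X^\H(2v_0)=\wt{\M}_{\ten}$ (resp.\ $\wt{\N}_{\six}$), the O'Grady space, by the last sentence of Theorem \ref{tm resolution for surfaces} (resp.\ Theorem \ref{tm resolution for abelian surfaces}); for $t=0$ it is $\Blow_{\Mm_C(2,0)^{\sps}}\Mm_C(2,0)=\wt{\Mm}_C(2,0)$ (resp.\ $\wt{\Nn}_C(2,0)$), the symplectic resolution of the Higgs moduli space, by Theorem \ref{tm desingularization}(1). Flatness of the total family over $\PP^1$ then follows because all fibres are smooth of the same dimension and the base is a smooth curve.

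Finally, for the symplectic structure: the relative Poisson form $\Theta$ of Theorem \ref{tm relative symplectic form} restricts to the relative symplectic form $\varOmega$ on $\mM_{\sS/\PP^1}^\hH(0,2\cC,-2)$, and $\bar\varOmega=\varOmega|_{\nN}$ in the $\SL_2$ case (Theorem \ref{tm Nn_A}). Because the blow-up $\wt{\mM}_{\ten}\to\mM_{\ten}$ is an isomorphism over the smooth locus and, fibrewise, each $\wt{\mM}_{\ten}|_t\to\mM_{\ten}|_t$ is a symplectic resolution in the sense of Beauville \cite{beauville_sing} — the pulled-back form on the preimage of the smooth locus extends to a holomorphic symplectic form on the whole (smooth) fibre, non-degenerate everywhere — the pull-back of $\varOmega$ (resp.\ $\bar\varOmega$) extends to a relative holomorphic $2$-form $\wt\varOmega$ on $\wt{\mM}_{\ten}$ (resp.\ $\wt{\bar\varOmega}$ on $\wt{\nN}_{\six}$) that is fibrewise non-degenerate, hence a relative symplectic form; on $t\neq 0$ it agrees up to scaling with the extension of Mukai's form, i.e.\ the O'Grady symplectic form, and on $t=0$ with the extension of the canonical cotangent symplectic form on the stable-bundle locus, i.e.\ the symplectic form of $\wt{\Mm}_C(2,0)$ (resp.\ $\wt{\Nn}_C(2,0)$). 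This is exactly the assertion that the family is a non-linear deformation of symplectic varieties.

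\medskip
\textbf{The main obstacle.} The delicate point is the extension of the relative $2$-form across the exceptional divisor of the \emph{relative} blow-up in a way that stays holomorphic and non-degenerate in the $\PP^1$-direction as well as along the fibres. Fibrewise extension is Beauville's uniqueness theorem, but one must argue that these fibrewise extensions glue into a single holomorphic relative form on the total space — equivalently, that the relative $2$-form, which a priori is only defined on the complement of the exceptional divisor, has no poles along it. I would handle this by working locally over $\PP^1$: the relative canonical sheaf computation of Proposition \ref{pr relative canonical bundle} shows $K_{\ol{\sS}/\PP^1}\cong\Oo_{\ol{\sS}}(\KK)$ has a distinguished section, and the relative symplectic form trivializes the relative canonical bundle of $\mM_{\sS/\PP^1}$ off the exceptional locus; since $\wt{\mM}_{\ten}\to\mM_{\ten}$ is crepant (symplectic resolutions are crepant) the relative canonical bundle pulls back without correction, so the section defining $\wt\varOmega^{\wedge \mathrm{top}}$ extends without zeros or poles, forcing $\wt\varOmega$ itself to extend as a non-degenerate relative $2$-form. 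The genus-$2$, rank-$2$ hypothesis is essential exactly here: it is the only case in which the blow-up of the reduced singular locus is crepant (Theorems \ref{tm resolution for surfaces}, \ref{tm desingularization}), so outside it the argument — and the statement — fails.
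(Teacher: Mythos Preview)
Your proposal is correct and follows essentially the same approach as the paper: both argue that the relative blow-up at the strictly polystable locus, combined with Theorems \ref{tm resolution for surfaces}, \ref{tm resolution for abelian surfaces}, \ref{tm desingularization} and Corollaries \ref{co Mm_ten}, \ref{co Nn_six}, yields the desired family of symplectic resolutions. The paper's proof is in fact a single sentence citing exactly these results, so your version is considerably more detailed---in particular your discussion of flatness of the centre, base-change for the blow-up, and the crepancy argument for extending $\varOmega$ across the exceptional divisor fills in points the paper leaves implicit.
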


\begin{proof}
Since both $\wt{\mM}_{\ten}$ and $\wt{\nN}_{\six}$ are constructed by blowing-up the strictly polystable loci of, respectively, $\mM_{\ten}$ and $\nN_{\six}$, the proof follows after Theorems \ref{tm resolution for surfaces}, \ref{tm resolution for abelian surfaces} and \ref{tm desingularization}, and Corollaries \ref{co Mm_ten} and \ref{co Nn_six}.
\end{proof}


\begin{thebibliography}{99999}%{999999}
\bibliographystyle{plain}



%\bibitem[BBH]{bartocci&bruzzo&ruiperez}
%C. Bartocci, U. Bruzzo and D. Hern\'andez Ruip\'erez,
%{\it Fourier--Mukai and Nahm Transforms in Geometry and Mathematical Physics}.
%Progress in Mathematics 276, Birkha\"user (2009). 



\bibitem[Be1]{beauville_IHSM} A. Beauville, {\it Vari\'et\'es k\"ahleriennes dont la premi\`ere classe de Chern est nulle}. J. Differential Geom., {\bf 18} (1983) 755--782. 


\bibitem[Be2]{beauville_book} A. Beauville, {\it Complex algebraic surfaces}, Cambridge University Press, 2nd edition (1996).

\bibitem[Be3]{beauville_fibr} Beauville, {\it Syst\`emes hamiltoniens compl\`etement int\'egrables associ\'es aux surfaces K3}, Symposia Mathematica (1991).

\bibitem[Be4]{beauville_sing} A. Beauville, {\it Symplectic singularities}, Invent. Math. 139 (2000), 541--549.

\bibitem[Be5]{beauville_3} Beauville, {\it Holomorphic symplectic geometry: a problem list}, Complex and Differential geometry, Springer (2011).


\bibitem[BNR]{BNR}
A. Beauville, M. S. Narasimhan and S. Ramanan,
Spectral curves and the generalised theta divisor.
J. Reigne Angew. Math. {\bf 398} (1989), 169--179.


\bibitem[BS]{bellamy&schedler} Bellamy and Schedler, {\it On the (non)existence of symplectic resolutions of linear quotients}, Math. Res. Lett. (2016).

\bibitem[BR]{biswas&ramanan} I. Biswas and S. Ramanan, {\it An infinitesimal study of the moduli pf Hitchin pairs}, J. London Math. Soc. (2) 49 (1994) 219--231.


\bibitem[BL]{birkenhake&lange} C. Birkenhake and H. Lange, {\it Complex Abelian Varieties}, Springer-Verlag Berlin Heidelberg, 2nd edition (2004).


\bibitem[BLR]{neron} S. Bosch, W. Lutkebohmert and M. Raynaud, Neron models,
{Ergebnisse der Mathematik und ihrer Grenzgebiete (3)},
\textbf{21}. Springer-Verlag, Berlin, 1990.


\bibitem[Bo1]{bottacin_1} F. Bottacin, {\it Poisson structures on moduli spaces of sheaves over Poisson surfaces}, Invent. math. 121,421-436 (1995).

\bibitem[Bo2]{bottacin_2} F. Bottacin, {\it Symplectic geometry on moduli spaces of stable pairs}, Ann. Sci. É.N.S., IV 28 (4) (1995), 391--433.

\bibitem[dC]{deCataldo}
M. A. de Cataldo.
A support theorem for the Hitchin fibration: the case of $SL_n$.
Comp. Math. 153 (6) (2017) 1316--1347.

\bibitem[dCHM]{deCataldo&hausel&migliorini} M.A. de Cataldo, T. Hausel, and L. Migliorini. {\it Exchange between perverse and weight filtration for the Hilbert schemes of points of two surfaces}, J. Singul., 7:23--38, (2013).

\bibitem[dCMS1]{deCataldo&maulik&shen_1} M. A. de Cataldo, D. Maulik and J. Shen, {\it Hitchin fibrations, abelian surfaces and the P=W conjecture}, J. Amer. Math. Soc. 35 (2022), 911--953 .


\bibitem[dCMS2]{deCataldo&maulik&shen_2} M. A. de Cataldo, D. Maulik and J. Shen, {\it On the P=W conjecture for $\SL_n$}, to appear in Selecta Math.


\bibitem[Co]{corlette} K. Corlette, Flat G-bundles with canonical metrics, \textit{J.
Diff. Geom.}, 28(3), 361--382, 1988.


\bibitem[DEL]{donagi&ein&lazarsfeld} R. Donagi, Ein and Lazarsfeld, {\it Nilpotent cones and sheaves on K3 surfaces}, Contemp. Math. (1997)


%\bibitem{donagi&pantev} Donagi and Pantev, {\it Langlands duality for Hitchin systems}, Invent. Math. (2012).


\bibitem[Do]{donaldson} S. Donaldson, Twisted harmonic maps and the self-duality
equations, \textit{Proc. London Math. Soc.} (3), 55(1):127--131, 1987.

\bibitem[FM]{felissetti&mauri} C. Felisetti and M. Mauri, {\it P=W conjectures for character varieties with symplectic resolution} Journal de l'École polytechnique, 2022, pp. 52.

\bibitem[Fr]{franco} E. Franco, {\it Degeneration of natural Lagrangians and Prymian integrable systems}, Math. Zeit., to appear.


\bibitem[Fu]{fulton} W. Fulton, {\it Intersection theory},
Springer-Verlag 2nd ed. (1998), Berlin, New York.

\bibitem[Gi]{gieseker} D. Gieseker, {\it On the moduli of vector bundles on an algebraic surface}, Annals Math., 106 (1977), 45--60

\bibitem[Ha]{hartshorne}
{R. Hartshorne}, {\it Algebraic Geometry}, Graduate Texts in Mathematics
\textbf{52}, Springer-Verlag, 1977.



\bibitem[Hi1]{hitchin_duke} Hitchin, {\it Stable bundles and integrable systems}, Duke Math. J. (1987).



\bibitem[Hi2]{hitchin-self}
N. J. Hitchin,
The self-duality equations on a Riemann surface.
Proc. London Math. Soc.  {\bf 55} (1987), 59--126.



\bibitem[Hi3]{hitchin_char}
N. J. Hitchin, 
Higgs bundles and characteristic classes.
In \emph{Arbeitstagung Bonn 2013; In Memory of Friedrich Hirzebruch},
Progr. Math.  Birkh\"auser {\bf 319},  2016,  247--264.


\bibitem[Hu1]{Huybrechts2}
D.~Huybrechts
{\it Compact hyperk\"ahler manifolds: basic results},
Invent. Math. {\bf 135} (1999), 63--113.

\bibitem[Hu2]{lecture-on-K3}
D.~Huybrecths,
{\it Lectures on K3 surfaces}.

\bibitem[Hu3]{Huybrechts}
D.~Huybrecths,
{\it Moduli spaces of hyperk\"ahler manifolds and Mirror symmetry}.
Intersection theory and moduli,
ICTP Lect. Notes, XIX, pages 185-247. Abdus Salam Int. Cent.
Theoret. Phys., Trieste, 2004.

\bibitem[Hu4]{huybrechts_CY} Huybrechts, {\it Compact hyperk\"ahler manifolds}, in Calabi-Yau manifolds and related geometries, Springer (2003). 


\bibitem[HL]{huybrechts&lehn}
D. Huybrechts and M. Lehn,
{\it The geometry of moduli spaces of sheaves},
Cambridge.


\bibitem[KL]{KL} Kaledin and Sorger, {\it Local structure of hyperk\"aehler singularities in O'Grady's spaces}, Moscow Math. J. (2007).

\bibitem[KLS]{kaledin&lehn&sorger} Kaledin, Lehn and Sorger, {\it Singular symplectic moduli spaces}, Invent. Math. (2006).

%\bibitem{kapustin&witten} Kapustin and Witten, {\it Electric-magnetic duality and the geometric Langlands program}, Comm. Numb. Th. and Phys. (2007).

\bibitem[KY]{kiem&yoo} Y-H. Kiem and S-B. Yoo, {\it The stringy E-function of the moduli space of Higgs bundles with trivial determinant}, Math. Nach. (2008).

\bibitem[Ki]{kiem} Y-H. Kiem, {\it Y-H. Kiem, On the existence of a symplectic desingularization of some moduli spaces of sheaves on
a K3 surface}, Compos. Math. 141 (2005), 902--906.

\bibitem[Kw]{kirwan} Kirwan, {\it Partial desingularizations of quotients of nonsingular varieties and their Betti numbers}, Ann. Math. (1985).

\bibitem[LS]{lehn&sorger} Lehn and Sorger, {\it La singularit\'e de O'Grady}, J. Alg. Geom. (2006).

%\bibitem[LP]{lepotier} J. Le Potier, {\it Faisceaux semi-stables de dimension 1 sur le plan projectif},
%Rev. Roumaine Math. Pures Appl. 38(7-8):635--678,(1993).

%\bibitem{looijenga} Looijenga, {\it Root systems and elliptic curves}, Invent. Math. (1977).

\bibitem[Ma]{matsushita} D. Matsushita, {\it On fibre space structures of a projective irreducible
symplectic manifold}, Topology 38 (1) (1999), 79--83. {\it Addendum}, Topology 40 (2) (2001), 431--432.


\bibitem[Mr1]{markman} E. Markman, {\it Spectral curves and integrable systems}, Comp. Math., 93 (3) (1994), 255--290.

\bibitem[Mr2]{Markman0}
E.~Markman,
{\it Integral constraints on the monodromy group of the hyperk\"ahler resolution of a symmetric product of a K3 surface}.
Internat. J. Math. {\bf 21} (2010), 169--223.

\bibitem[Mr3]{Markman}
E.~Markman,
{\it A survey of Torelli and monodromy results for holomorphic-symplectic varieties},
Complex and differential geometry, 257--322, Springer Proc. Math., 8, Springer, Heidelberg 2011.

\bibitem[MT]{MT} Markushevich and Tikhomirov, {\it New symp. $V$-manifolds of dim. $4$ via relative compactified Prymian}, Int. J. Math. (2007).

\bibitem[Mu1]{mukai_fourier} S. Mukai, {\it Duality between $D(X)$ and $D(\hat{X})$ with its application to Picard sheaves}, Nagoya Math. J. 81 (1981) 153--175. 

\bibitem[Mu2]{mukai1}
S. Mukai,
{\it Symplectic structure on the module space of sheaves on an abelian or K3 surface}. 
Invent. Math. {\bf 77} (1984), 101--116.

\bibitem[Mu3]{mukai2}
S. Mukai,
{\it On the moduli space of bundles on K3 surfaces. I}. 
In {\it Vector bundles on algebraic varieties} (Bombay, 1984), Tata Institute of Fundamental Research
Studies in Mathematics, vol. 11, Oxford University Press, Bombay, 1987, pp. 341--413.


\bibitem[Ni]{nitsure}
N. Nitsure,
Moduli of semistable pairs on a curve. 
Proc. London Math. Soc. {\bf 62} (1991), 275--300.


\bibitem[Or]{orlov}
D. O. Orlov,
{\it Equivalences of derived categories and K3 surfaces}.
J. Math. Sci. (New York), 84 (1997), pp. 1361--1381.


\bibitem[OG1]{OGrady92} O'Grady, {\it Donaldson's polynomials for K3 surfaces}, J. Diff. Geom. (1992).


\bibitem[OG2]{OGrady97} O'Grady, {\it The weight-two Hodge structure of moduli spaces of sheaves on a K3 surface}, J. of Alg. Geom. (1997).


\bibitem[OG3]{OGrady_1} O'Grady, {\it Desingularized moduli spaces of sheaves on a K3 surface}, J. reine angew Math. (1999).

\bibitem[OG4]{OGrady_2} O'Grady, {\it A new six dimensional irreducible symplectic variety}, J. Alg. Geom. (2003).

%\bibitem{OGrady_3} K. O'Grady, {\it Involutions and linear systems on holomorphic symplectic manifolds}, Geom. Funct. Anal. (2005).%, 1223--1274.

\bibitem[OG5]{OGrady_4} O'Grady, {\it Hyper\"ahler manifolds and algebraic geometry}, European Congress of Mathematics Eur. Math. Soc. (2005).%, 1223--1274.

\bibitem[PR]{perego&rapagnetta} A. Perego and A. Rapagnetta, {\it Deformation of the O’Grady moduli space}, J. reine angew Math. 678 (2013), 1--34.


\bibitem[Ra]{rapagnetta} A. Rapagnetta, {\it Topological invariants of O’Grady’s six dimensional irreducible symplectic variety},
Math. Z. 256 (2007), 1--34.


\bibitem[Sb]{schaub} D. Schaub.
Courbes spectrales et compactifications de Jacobiennes.
{Mathematische Zeitschrift} 227, issue 2  (1998) 295--312.


\bibitem[Sw]{sawon} J. Sawon, {\it Lagrangian fibrations by Prym varieties}, Matem\'atica Contempor\^anea, 47, 182--227.

\bibitem[SS]{sawon&shen} J. Sawon and C. Shen, {\it Deformations of compact Prym fibrations to Hitchin systems}, Bull. London Math. Soc. (2022), DOI:10.1112/blms.12643.



\bibitem[Si1]{simpson1} 
C.T. Simpson. 
Moduli of representations of the fundamental group of a smooth projective variety I. 
Publ. Math., Inst. Hautes Etud. Sci. {\bf 79} (1994), 47--129.

\bibitem[Si2]{simpson2} 
C.T. Simpson.
Moduli of representations of the fundamental group of a smooth projective variety II.
Publ. Math., Inst. Hautes Etud. Sci. {\bf 80} (1995), 5--79.


\bibitem[Ti]{tirelli} Tirelli, {\it Symplectic resolutions for Higgs moduli spaces}, aXv: 1701.07468.

\bibitem[Y1]{yau1} S. -T. Yau, {\it Calabi's conjecture and some new results in algebraic geometry},  Proc. Nat. Ac. Sci. Am., 74 (5) (1977) 1798--1799.

\bibitem[Y2]{yau2} S. -T. Yau, {\it On the Ricci curvature of a compact Kähler manifold and the complex Monge-Amp\`{e}re equation. I}, Comm. Pure App. Math., 31 (3) (1978) 339--411.

\bibitem[Yo]{yoo} S. -B. Yoo, {\it A desingularization of the moduli space of rank 2 Higgs bundles over a curve},  Taiwanese J. Math. 25(2) (2021) 257--301.

\bibitem[Ys1]{yoshioka_1} K. Yoshioka, {\it Some notes on the moduli of stable sheaves on elliptic surfaces}, Nagoya Math. J. 154 (1999), 73--102.

\bibitem[Ys2]{yoshioka_2} K. Yoshioka, {\it Moduli spaces of stable sheaves on abelian surfaces}, Math. Ann. 321 (2001) 817--884.


\end{thebibliography}
\end{document}